\documentclass[10pt]{amsart}

\hoffset=-1.2true cm

\usepackage{amsmath,latexsym,amssymb,amsthm,enumerate,amsmath,amscd}
\usepackage[mathscr]{eucal}
\usepackage{color,fullpage,kotex}
\usepackage[usenames,dvipsnames]{xcolor}
\usepackage{colortbl}
\usepackage[all,cmtip]{xy}
\usepackage{graphicx,mathrsfs}
\usepackage{comment,youngtab,ytableau}
\usepackage{tabulary,longtable,tabu,multirow,multicol}
\usepackage{color}
\usepackage[colorlinks=true, linkcolor=blue, anchorcolor=blue, citecolor=blue, filecolor=blue, menucolor= blue, urlcolor=red]{hyperref}
\usepackage{tikz,url}
\usetikzlibrary{calc,shadings,patterns}
\usepackage{tikz-cd}

\setlength{\topmargin}{0.0in}
\setlength{\textheight}{23cm}
\setlength{\evensidemargin}{0.35in}
\setlength{\oddsidemargin}{0.35in}
\setlength{\headsep}{0.1cm}
\setlength{\textwidth}{16.8cm}
\setlength{\parindent}{0.6cm}
\setlength{\unitlength}{1mm}
\theoremstyle{plain}
\newtheorem{theorem}{Theorem}[section]
\newtheorem{proposition}[theorem]{Proposition}
\newtheorem{corollary}[theorem]{Corollary}
\newtheorem{lemma}[theorem]{Lemma}

\theoremstyle{definition}
\newtheorem{definition}[theorem]{Definition}

\newtheorem{example}[theorem]{Example}

\newtheorem{lem}[theorem]{Lemma}

\numberwithin{equation}{section}
\numberwithin{table}{section}
\setcounter{secnumdepth}{2}
\setcounter{tocdepth}{2}

\definecolor{purple}{rgb}{0.4,0.2,0.4}

\def\<{\left<}
\def\>{\right>}


\definecolor{med-gray}{gray}{0.5}
\definecolor{gray1}{gray}{0.87}
\definecolor{gray2}{gray}{0.74}
\definecolor{gray3}{gray}{0.64}
\definecolor{gray4}{gray}{0.48}
\definecolor{verylight-yellow}{rgb}{1,1,0.7}
\definecolor{yellow}{rgb}{1,1,0.2}
\definecolor{vivid-blue}{rgb}{0.2,0,1}
\definecolor{light-pink}{rgb}{1,0.8,1}
\definecolor{med-pink}{rgb}{1,0.6,1}
\definecolor{aqua}{rgb}{0.0, 1.0, 1.0}
\definecolor{light-gray}{rgb}{0.5, 0.9, 0.5}

\def\char{{\rm{char}}\kern.03em}
\def\k{\Bbbk}

\def\sl{\mathfrak{sl}}

\allowdisplaybreaks

\begin{document}

\title{Representation Theory of Symmetric Groups \\ and the Strong Lefschetz Property}

\author[Seok-Jin Kang]{Seok-Jin Kang$^{*}$}
\address{Korea Research Institute of  Arts and Mathematics, Asan-si,
Chungcheongnam-do, 31551, Korea}
\email{soccerkang@hotmail.com}

\thanks{$^{*}$ This research was supported by Hankuk University of Foreign Studies Research Fund.}

\author[Y.R. KIM]{Young Rock Kim${}^{**}$}
\address{Major in Mathematics Education, Graduate School of Education, Hankuk University of Foreign Studies, Seoul,  02450, Korea}
\email{rocky777@hufs.ac.kr} %
\thanks{${}^{**}$ This research was supported by the Basic Science Research Program of the 
NRF (Korea) under grant No. \\ \indent ~~2015R1D1A1A01059643.}

\author[Y.S. Shin]{Yong-Su Shin${}^\dag$}
\thanks{${}^\dag$This research was supported by the Basic Science Research Program of the NRF (Korea) under grant No.\\ \indent ~~2019R1F1A1056934}
\address{Department of Mathematics, Sungshin Women's University,  Seoul, 02844, Korea and School of Mathematics, Korea Institute for Advanced Study, Seoul, 02455, Korea}
\email{ysshin@sungshin.ac.kr}
\keywords{Strong Lefschetz property, Representation theory, Symmetric group, Artinian monomial complete intersection quotients, Hilbert polynomial}

\subjclass[2010]{Primary:13A02, Secondary: 20C99}

\begin{abstract} 

We investigate the structure and properties of an Artinian monomial complete intersection quotient 
$A(n,d)=\k [x_{1}, \ldots, x_{n}]   \big /   (x_{1}^{d}, \ldots, x_{n}^d)$. We construct explicit homogeneous bases of $A(n,d)$ that are compatible with the $S_{n}$-module structure for $n=3$,  all exponents $d \ge 3$ and all homogeneous degrees $j \ge 0$. Moreover, we derive the multiplicity formulas, both in recursive form and in closed form,  
for each irreducible component appearing in the $S_{3}$-module decomposition of homogeneous subspaces. 

\end{abstract}

\maketitle 


\section*{Introduction} 

\vskip 2mm 

The purpose of this paper is to investigate the structure and properties of an 
Artinian monomial complete intersection quotient
$$A(n,d) \cong \k[x_{1}, \ldots, x_{n}] \big / (x_{1}^{d}, \ldots, x_{n}^{d}) 
=\bigoplus_{j=0}^{m(d)} A(n,d)_{j},$$
where $\k$ is an algebraically closed field of characteristic $0$ and $m(d) = n(d-1)$. 

\vskip 2mm 

In \cite{St:1}, R. Stanley proved that $A(n,d)$ has the strong Lefschetz property in the narrow sense
with $\ell = x_{1} + \cdots + x_{n}$ as a strong Lefschetz element. That is, the linear map 
$F^{d} = \times \ell^{d}: \, A_{j} \rightarrow A_{j+d}$ has the maximal rank for all $j\ge 0$, $d\ge 1$
and its Hilbert polynomial 
$$\text{Hilb}(A(n,d), t):=\sum_{j=0}^{m(d)} (\dim A(n,d)_{j}) t^{j}$$
satisfies condition
$$\dim A(n,d)_{j} = \dim A(n,d)_{m(d) -j} \quad \text{for all} \ \ j=0, 1, \ldots, \left\lfloor \frac{m(d)}{2} \right\rfloor.$$ 
Since it is a major breakthrough in the theory of Lefschetz properties, Stanley's Theorem 
has been reproved in various contexts. In particular, J. Watanabe took a representation-theoretic 
approach to this theorem, which attracts our attention. 

\vskip 2mm 

The symmetric group $S_{n}$ acts on the polynomial algebra $\k[x_{1}, \ldots, x_{n}]$ by permuting the 
variables, which induces a natural $S_{n}$-action on $A(n,d)$. Since $\ell=x_{1} + \cdots + x_{n}$ is 
invariant under the $S_{n}$-action, the linear map $F=\times \ell : A(n,d)_{j} \rightarrow A(n,d)_{j+1}$ 
commutes with the symmetric group action. Moreover, the linear map $E: A(n,d)_{j+1} \rightarrow
A(n,d)_{j}$, defined by 
$$E(x_{1}^{a_1} \cdots x_{n}^{a_n}) = \sum_{k=0}^{n} a_{k} (d-a_{k}) x_{1}^{a_1} \cdots 
x_{k}^{a_k -1} \cdots x_{n}^{a_n},$$
also commutes with the $S_{n}$-action.
It is straightforward to verify that the linear maps $F$, $E$, $H:=[E, F]$ generate a Lie algebra isomorphic to
$\sl_{2}$. Hence the the algebra $A(n,d)$ has a $(\k[S_{n}] \times U(\sl_{2}))$-bimodule structure,
where $\k[S_{n}]$ is the group algebra of $S_{n}$ and $U(\sl_{2})$ is the universal enveloping algebra of $\sl_{2}$.  

\vskip 2mm

One of our main goals of this paper is to give an explicit construction of homogeneous bases of 
the algebra $A(n,d)$ which is compatible with the $S_{n}$-module structure. As we have seen 
in the above discussion, this problem is reduced to finding such bases of $\text{Ker}(E) \cap A(n,d)_{j}$
for $j=0, 1, \ldots, \lfloor \frac{m(d)}{2} \rfloor$. We achieve our goal by combining the representation theory
of symmentric groups and the standard $\sl_{2}$-theory. In addition, we determine the multiplicity of 
each irreducible $S_{n}$-module appearing in the decomposition of homogeneous subspaces
$\text{Ker}(E)$. In Section 3, we express these multiplicities in terms 
of {\it rectangular partitions} and in Section 4, we prove interesting  {\it recusrsive relations} between the multiplicities.
Finally, in Section 5, we derive explicit {\it closed form formulas} for the multiplicities.
In this paper, we focus on the case when $n=3$, all the exponents $d \ge 3$ and all the  homogeneous degrees $j \ge 0$. The more general cases will be dealt with in our forthcoming paper \cite{KKS2}.  

\vskip 2mm 

This paper is organized as follows. In Section 1, we briefly review the standard  $\sl_2$-theory and explain 
the representation-theoretic approach to the Artinian graded $\k$-algebras having the strong Lefschetz 
property in the narrow sense.  
In Section 2, we recall some of basic representation theory of symmetric groups 
and recollect the related combinatorics of Young diagrams and Young tableaux. 
In Section 3, we explain the strategy to construct explicit homogeneous basis polynomials
and determine the multiplicities of irreducible submodules in terms of rectangular partitions. 
In Section 4 and Section 5, we derive the multiplicity formulas, both in recursive form and in closed form,  
for each irreducible component appearing in the $S_{3}$-module decomposition of homogeneous subspaces.

\vskip 5mm

\section{Basic $\sl_{2}$-theory and Strong Lefschetz property}

We begin with a brief review of basic $\sl_{2}$-theory and the properties of graded algebras with strong Lefschetz property in the narrow sense. In this paper, $\k$ denotes an algebraically closed field of characteristic $0$.  

Recall that $\sl_{2}$ is the Lie algebra generated by the elements $e$, $f$, $h$ with defining relations
 \begin{equation} \label{eq:sl2}
 [e, f] = h, \quad [h, e] = 2e, \quad [h,f] = -2f.
 \end{equation}

For each $m \in \mathbf{Z}_{\ge 0}$, it is well-known that there exists a unique (up to isomorphism) $(m+1)$-dimensional irreducible $\sl_{2}$-modules $V(m)$ with a basis $\{v_{0}, v_{1}, \ldots, v_{m} \}$, where the $\sl_{2}$-action is given by 
\begin{equation} \label{eq:V(m)}
\begin{aligned}
& h \cdot v_{k} = (m-2k)\, v_{k}, \\
& f \cdot v_{k} = v_{k+1}, \\
& e \cdot v_{k} = k (m_k+1) \,v_{k-1}. 
\end{aligned}
\end{equation}
Here, we understand $v_{-1} = v_{m+1} =0$. 

\vskip 2mm 

Let $V$ be a finite-dimensional $\sl_{2}$-module. We say that a non-zero vector $v \in V$ {\it has weight $\lambda$} if $h \cdot v = \lambda \, v$ for some $\lambda \in \k$. Moreover, $v$ is called a {\it highest weight vector } (resp. {\it lowest weight vector})  if $e \cdot v=0$ (resp. $f \cdot v =0$). Thus, in $V(m)$, $v_{k}$ has weight $m-2k$, $v_{0}$ is a highest weight vector with weight $m$ and $v_{m}$ is a lowest weight vector with weight $-m$. 

\vskip 2mm 

Over an algebraically closed field of characteristic $0$, {\it Weyl's Theorem} implies that every finite-dimensional 
$\sl_{2}$-module is completely reducible (see, for example, \cite{Humph78}). Let $V$ and $W$ be 
finite-dimensional $\sl_{2}$-modules. Then $V \otimes W$ becomes an $\sl_{2}$-module via
\begin{equation*}
x \cdot (v \otimes w) = x \cdot v \otimes w + v \otimes x \cdot w \quad \text{for} \ x\in \sl_{2}, \ v \in V, \ w\in W.
\end{equation*}

For $m \ge n \ge 0$, by Weyl's Theorm, the tensor product $V(m) \otimes V(n)$ is completely reducible and 
its irreducible decomposition is given by {\it Clebsch-Gordan formula}:
\begin{equation} \label{eq:CG}
V(m) \otimes V(n) \cong V(m+n) \oplus V(m+n-2) \oplus \cdots \oplus V(m-n).
\end{equation}

For example, we have
\begin{equation*}
\begin{aligned}
& V(3) \otimes V(2) \cong V(5) \oplus V(3) \oplus V(1), \\
& V(2) \otimes V(2) \otimes V(2) \cong V(6) \oplus V(4)^{\oplus 2} \oplus V(2)^{\oplus 3} \oplus V(0).
\end{aligned}
\end{equation*}

\vskip 2mm 

Let $A=\bigoplus_{j=0}^{m} A_{j}$ be a finite-dimensional graded $\k$-algebra. 
We define its {\it Hilbert polynomial} by 
$$\text{Hilb}(A, t):= \sum_{j=0}^{m} (\dim\, A_{j})\, t^{j}.$$

We say that $A$ has the {\it strong Lefschetz property in the narrow sense} if there exists an element 
$\ell  \in A_{1}$ such that the linear map $F^{d} = \times \ell^{d} : A_{j} \rightarrow A_{j+d}$ has maximal rank 
for all $j \ge 0$, $d \ge 1$ and  
$\dim A_{j} = \dim A_{m-j}$ for $0 \le j \le \lfloor \frac{m}{2} \rfloor$. In this case, $\ell$ is called a 
{\it strong Lefschetz  element}. 

\vskip 2mm 

Since $A$ is finite-dimensional, the linear map $F=\times \ell : A \rightarrow A$ is nilpotent. Consider
the Jordan canonical form of $F$ with Jordan blocks of size $m_{1}, \ldots, m_{r}$. We rearrange the
Jordan blocks so that we have $m_{1} \ge m_{2} \ge \cdots \ge m_{r} >0$. For each $j=1, \ldots, r$, let 
$B_{j}=\{u_{j}, F(u_{j}), \ldots, F^{m_{j}-1}(u_{j}) \}$ be a Jordan canonical basis of the corresponding 
Jordan block. By our construction, $B:=\sqcup_{j=1}^{r} B_{j}$ is a Jordan canonical basis of $A$ 
with respect to $F$. 

\vskip 2mm 

For each $j=1, \ldots, r$, let $A^{(j)}$ be the subspace of $A$ spanned by $B_{j}$ and define the linear maps
$H: A^{(j)} \rightarrow A^{(j)}$, \ $E: A^{(j)} \rightarrow A^{(j)}$ by 
\begin{equation*}
H(F^{k}(u_{j})) = (m_{j} -2k)\, F^{k}(u_{j}), \quad E(F^{k}(u_{j})) = k(m_{j} - k) F^{k-1}(u_{j}) \quad (0 \le k \le m_{j}-1).
\end{equation*}
Then it is straightforward to verify that $E$, $F$, $H$ satisfy the defining relation \eqref{eq:sl2} and for each 
$j=1, \ldots, r$, the vectors $F^{k}(u_{j})$ satisfy the relations in  \eqref{eq:V(m)}. It follows that each 
subspace $A^{(j)}$ is isomorphic to the irreducible $\sl_{2}$-module $V(m_{j}-1)$. 

\vskip 2mm 

Hence $A$ becomes a completely reducible $\sl_{2}$-module and its irreducible decomposition 
is determined by the Jordan canonical form of $F$. More precisely, we have 
\begin{equation} \label{eq:sl2decomp}
A \cong \bigoplus_{j=0}^{\lfloor \frac{m}{2} \rfloor} V(m-2j)^{\oplus a_{j}},
\end{equation}
where $a_{0}=1$, $a_{j} = \dim A_{j} - \dim A_{j-1}$ for $0 \le j \le \lfloor \frac{m}{2} \rfloor$. 

\vskip 2mm 

\begin{example}\label{ex:Hilb} 

Let $A=\k[x_1,x_2,x_3] \big / (x_{1}^3, x_{2}^3, x_{3}^3)$. Then its Hilbert polynomial is 
$$\text{Hilb}(A, t) = 1 + 3t + 6t^2 + 7t^3 + 6t^4 + 3t^5 + t^6.$$
Hence the $\sl_2$-decomposition of $A$ is given by 
$$A \cong V(6) \oplus V(4)^{\oplus 2} \oplus V(2)^{\oplus 3} \oplus V(0).$$
Note that $A \cong V(2)^{\otimes 3}$ as an $\sl_{2}$-module. 
\end{example}

\vskip 3mm 

The following proposition is one of the main results in \cite{H-W2080}. The key ingredient of its proof 
is the Clebsch-Gordan formula.

\begin{proposition}[\cite{H-W2080}]
Let $(A_{1}, \ell_{1})$ and $(A_{2}, \ell_{2})$ be  finite dimensional graded algebras having the strong  
Lefschetz property in the narrow sense  with strong Lefschetz elements $\ell_1$ and $\ell_2$, respectively. 
Then  $A_{1} \otimes A_{2}$ also has the strong Lefschetz property in the narrow sense with a strong Lefschetz element $\ell=\ell_{1} \otimes 1 + 1 \otimes \ell_{2}$. 
\end{proposition}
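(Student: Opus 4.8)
The plan is to exploit the $\sl_2$-module structure that the strong Lefschetz property induces, as recalled in the discussion preceding the statement, and then invoke the Clebsch--Gordan formula \eqref{eq:CG}. First I would observe that, since $(A_1,\ell_1)$ has the strong Lefschetz property in the narrow sense, the multiplication map $F_1=\times\ell_1$ makes $A_1$ a completely reducible $\sl_2$-module whose irreducible decomposition is governed by the Jordan form of $F_1$, exactly as in \eqref{eq:sl2decomp}; in particular, writing $m=\deg A_1$, the symmetry $\dim(A_1)_j=\dim(A_1)_{m-j}$ forces the decomposition $A_1\cong\bigoplus_j V(m-2j)^{\oplus a_j}$ with $a_j=\dim(A_1)_j-\dim(A_1)_{j-1}\ge 0$. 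The same applies to $A_2$. The point is that each summand $V(k)$ is a \emph{graded} $\sl_2$-submodule: its weight-$(k-2i)$ line sits in homogeneous degree (bottom degree of the block) $+\,i$, so the grading on $A_1$ is recovered from the $h$-eigenspace grading up to an overall shift on each block.

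Next I would equip $A_1\otimes A_2$ with the tensor $\sl_2$-action $x\cdot(v\otimes w)=x\cdot v\otimes w+v\otimes x\cdot w$, and note that this is precisely the action of $F=\times(\ell_1\otimes 1+1\otimes\ell_2)$ (for $F$ itself) together with the corresponding $E$ and $H=[E,F]$; that these three operators satisfy \eqref{eq:sl2} on the tensor product is the standard comultiplication computation. Decomposing $A_1\cong\bigoplus V(m_i-1)$ and $A_2\cong\bigoplus V(n_l-1)$ into $F_1$- and $F_2$-Jordan blocks and applying Clebsch--Gordan \eqref{eq:CG} to each pair $V(m_i-1)\otimes V(n_l-1)$, I get $A_1\otimes A_2$ as a direct sum of irreducible $\sl_2$-modules $V(p)$. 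The crucial bookkeeping step is to check that this abstract $\sl_2$-decomposition is compatible with the internal grading of $A_1\otimes A_2$ (where $\deg(u\otimes v)=\deg u+\deg v$): one must verify that the $h$-weight of a homogeneous element of degree $j$ in $A_1\otimes A_2$ is $M-2j$, where $M=\deg A_1+\deg A_2$ is the top degree. This follows because $H=H_1\otimes 1+1\otimes H_2$ and on each factor the weight of a degree-$j$ element equals (top degree of that factor) $-2j$ — a fact one reads off from \eqref{eq:sl2decomp} applied to $A_1$ and $A_2$ separately — so the weights simply add.

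Once that compatibility is in hand, the conclusion is immediate and purely formal. From the $\sl_2$-decomposition $A_1\otimes A_2\cong\bigoplus_p V(p)^{\oplus b_p}$ together with the fact that each $V(p)$ contributes one dimension to each homogeneous degree from $\tfrac{M-p}{2}$ up to $\tfrac{M+p}{2}$, I read off $\dim(A_1\otimes A_2)_j=\sum_{p\ge|M-2j|}b_p$, which is manifestly symmetric under $j\mapsto M-j$, giving the Hilbert-function symmetry. For the maximal-rank statement: $F^{d}=\times\ell^{d}$ acts on each irreducible summand $V(p)$ as $f^d$ does on the standard module, which by \eqref{eq:V(m)} has maximal rank between any two weight spaces; since $(A_1\otimes A_2)_j$ and $(A_1\otimes A_2)_{j+d}$ are the direct sums of the matching weight spaces of the summands, $F^d\colon(A_1\otimes A_2)_j\to(A_1\otimes A_2)_{j+d}$ is block-diagonal with each block of maximal rank, hence of maximal rank overall. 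Therefore $\ell=\ell_1\otimes 1+1\otimes\ell_2$ is a strong Lefschetz element for $A_1\otimes A_2$.

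I expect the main obstacle to be the grading-compatibility check in the second paragraph: one has to be careful that the $\sl_2$-module structure coming from $\ell_1\otimes 1+1\otimes\ell_2$ really does respect the tensor-product grading, i.e. that abstract $\sl_2$-weights translate correctly into polynomial degrees on both factors simultaneously (the potential pitfall is an off-by-a-shift error when a Jordan block of $A_i$ does not start in degree $0$). Everything else — the comultiplication identity for $E,F,H$, the application of Clebsch--Gordan, and the extraction of the Hilbert function and rank statements from the irreducible decomposition — is routine given the machinery already set up in Section 1.
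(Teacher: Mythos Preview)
Your proposal is correct and follows exactly the line the paper indicates: the paper does not spell out a proof but states that ``the key ingredient of its proof is the Clebsch--Gordan formula,'' and your argument is precisely the standard execution of that idea---decompose each $A_i$ as an $\sl_2$-module via \eqref{eq:sl2decomp}, tensor, apply \eqref{eq:CG}, and read off the symmetry and maximal-rank conditions from the resulting block decomposition. The grading-compatibility check you flag is indeed the only nonformal point, and your verification that $H$ acts as $(m_i-2j)\cdot\mathrm{id}$ on $(A_i)_j$ (hence $H=H_1\otimes 1+1\otimes H_2$ acts as $(M-2j)\cdot\mathrm{id}$ on the degree-$j$ piece of the tensor product) is exactly what is needed.
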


As an immediate corollary, we obtain a representation-theoretic proof of Stanley's Theorem.  

\begin{corollary}[\cite{H-W2080, Wa:1}]
An Artinian monomial complete intersection quotient 
$$A=\k[x_{1}, \ldots, x_{n}] \big / (x_{1}^{d_1}, \ldots, x_{n}^{d_{n}})$$
has the strong Lefschetz property in the narrow sense with a strong Lefschetz element $\ell=x_{1} + \cdots + x_{n}$. 
\end{corollary}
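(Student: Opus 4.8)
The plan is to deduce the Corollary directly from the Proposition by induction on the number of variables $n$. The base case $n=1$ is the one-variable algebra $\k[x_{1}]\big/(x_{1}^{d_{1}})$, which has Hilbert polynomial $1+t+\cdots+t^{d_{1}-1}$; one checks immediately that multiplication by $\ell_{1}=x_{1}$ sends $A_{j}$ to $A_{j+1}$ and has maximal rank in every degree (it is either injective or surjective since every graded piece is one-dimensional), and the palindromic symmetry $\dim A_{j}=\dim A_{d_{1}-1-j}$ is trivial. Hence $(\k[x_{1}]\big/(x_{1}^{d_{1}}),\, x_{1})$ has the strong Lefschetz property in the narrow sense.

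For the inductive step, suppose the statement holds for $n-1$, so that $A'=\k[x_{1},\ldots,x_{n-1}]\big/(x_{1}^{d_{1}},\ldots,x_{n-1}^{d_{n-1}})$ has the strong Lefschetz property with strong Lefschetz element $\ell'=x_{1}+\cdots+x_{n-1}$, and let $A''=\k[x_{n}]\big/(x_{n}^{d_{n}})$ with strong Lefschetz element $\ell''=x_{n}$. There is a natural isomorphism of graded $\k$-algebras
\begin{equation*}
A=\k[x_{1},\ldots,x_{n}]\big/(x_{1}^{d_{1}},\ldots,x_{n}^{d_{n}}) \;\cong\; A'\otimes_{\k} A'',
\end{equation*}
under which $\ell'\otimes 1 + 1\otimes\ell''$ corresponds to $x_{1}+\cdots+x_{n-1}+x_{n}=\ell$. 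By the Proposition, $A'\otimes_{\k}A''$ has the strong Lefschetz property in the narrow sense with strong Lefschetz element $\ell'\otimes 1+1\otimes\ell''$; transporting along the isomorphism, $A$ has the strong Lefschetz property in the narrow sense with strong Lefschetz element $\ell=x_{1}+\cdots+x_{n}$. This closes the induction.

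The only genuine points requiring care — and the closest thing to an obstacle — are (i) verifying that the tensor-product decomposition $A\cong A'\otimes_{\k}A''$ is an isomorphism of \emph{graded} algebras matching the grading used in the Proposition (this is routine: a monomial basis of $A$ is the product of monomial bases of $A'$ and $A''$, and degrees add), and (ii) checking that $\ell$ actually lies in $A_{1}$ and is carried to $\ell'\otimes 1+1\otimes\ell''$ under the identification, which is immediate from the definition of the tensor-product grading. Everything substantive — the interaction of the strong Lefschetz property with tensor products, proved via the Clebsch--Gordan formula — has already been packaged into the Proposition, so the Corollary is essentially a formal consequence together with the trivial one-variable base case.
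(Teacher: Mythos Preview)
Your proof is correct and matches the paper's intended argument: the paper simply states the Corollary as ``an immediate corollary'' of the Proposition on tensor products, and your induction on $n$ with the one-variable base case and the identification $A\cong A'\otimes_{\k}A''$ is precisely the routine unpacking of that remark. There is nothing to add.
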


\vskip 5mm

\section{Representation theory of symmetric groups}

We now recollect some pieces of the representation theory of $S_{n}$. A weakly decreasing sequence of positive integers $\lambda = (\lambda_{1} \ge \lambda_{2} \ge \cdots \ge \lambda_{r} >0)$ is called a {\it partition} of $n$ if 
$\lambda_1 + \lambda_2 + \cdots + \lambda_r = n$. The positive integer $r$ is called the {\it length} of $\lambda$, usually written as $\ell(\lambda)$. We denote by $\mathscr{P}(n)$ the set of all partitions of $n$ and $p(n)$ the number of elements in $\mathscr{P}(n)$. 
A partition $\lambda = (\lambda_1 \ge \lambda_2 \ge \cdots \ge \lambda_r>0)$ is called a {\it rectangular
partition of {siz} $k \times l$} 
if $\ell(\lambda) \le k$ and all the entries in $\lambda$ are $\le l$.
We denote by ${\mathscr P}_{k,l}(n)$ the set of all rectangular partitions of $n$ of size $k \times l$ and $p_{k,l}(n)$ the 
number of elements in $\mathscr{P}(n)$.  

\vskip 2mm 

We identify a partition $\lambda=(\lambda_1 \ge \lambda_2 \ge \cdots \ge \lambda_r>0)$ with the {\it Young diagram} $Y^{\lambda}$ which consists of $\lambda_1$-many boxes in the first row, $\lambda_2$-many boxes in the second row, etc. For instance, the partition $\lambda=(5,3,2,2)$ of $12$ is identified with the following Young diagram.
\vskip 1 true mm
$$
Y^{\lambda}= \ydiagram{5,3,2,2}
$$
\vskip 1.5 true mm

\vskip 2mm 

\begin{definition} 

Let $\lambda$ be a partition of $n$. 
A {\it standard tableau of shape $\lambda$} is a filling of the boxes of $Y^{\lambda}$ with entries taken from $1, 2, \ldots, n$ such that 
\begin{enumerate}
\item[(i)] all the entries in each row are strictly increasing from left to right, 

\item[(ii)] all the entries in each column are strictly increasing from top to bottom.
\end{enumerate}
\end{definition}

An example of standard tableau of shape $\lambda=(5,3,2,2)$ is given below.

\vskip 2mm 
$$
T=\begin{ytableau} 1&2&4&6& 9 \\3&8&11\\5&10\\7&12 \end{ytableau}
$$
%
%
%
%

\vskip 3mm 

To each partition $\lambda$ of $n$, there exists a unique (up to isomorphism) irreducible $S_{n}$-module $S^{\lambda}$ which is called the {\it Specht module} associated with $\lambda$. It is known that every finite dimensional irreducible $S_{n}$-module has the form  $S^{\lambda}$ for some partition $\lambda$ of $n$.  Furthermore, the dimension of $S^{\lambda}$ is given by the number of standard tableaux of shape $\lambda$. (See, for example, \cite{Fulton97, Fulton-Harris, Sagan2001}). 

\vskip 2mm 

There are three special types of irreducible $S_{n}$-modules: {\it trivial representation}, {\it sign representation} and {\it 
standard representation}. 

\vskip 2mm 

\begin{enumerate}

\item[(a)] The trivial representation is the 1-dimensional $S_{n}$-modules on which every permutaton acts as the identity. One can see that the trivial representation corresponds to the partition $\lambda = (n)$.

\item[(b)] The sign representation is the 1-dimensional $S_{n}$-module on which every permutation acts as the multiplication by its sign. The sign representation corresponds to the partition $\lambda = (1, 1, \ldots, 1)$.

\item[(c)] The standard module is the Specht module corresponding to the partition $\lambda =  (n-1, 1)$. It can be identified with the $(n-1)$-dimensional hyperplane $H=\{(x_1, \ldots, x_n) \in \k^{n}  \mid x_{1} + \cdots + x_{n} = 0 \}$. 

\end{enumerate}

\vskip 2mm 

When $n=3$, they are all ireducible representations of the symmetric group.  

\vskip 3mm 

\begin{example} In the following figure, we exhibit all the standard tableaux corresponding to the partitions $(3)$, $(1,1,1)$ and $(2,1)$.

$$
\begin{array}{lllllllllll}
\lambda =(3), & \begin{ytableau} 1&2&3\end{ytableau}\, , \\[2.0ex] 
 \lambda =(1,1,1), & \begin{ytableau} 1\\2\\3\end{ytableau}\, ,\\[8.5ex]
 \lambda = (2,1), & \begin{ytableau} 1&2\\3\end{ytableau}, \quad \begin{ytableau} 1&3\\2\end{ytableau} \, .

\end{array}
$$
Hence $\text{dim} S^{(3)} = \text{dim} S^{(1,1,1)} =1$ and $\text{dim} S^{(2,1)}=2$. 
\end{example}

\vskip 5mm

\section{Artinian monomial complete intersection quotients} 

Let $A:= A(n,d)= \k[x_{1}, \ldots, x_{n}] \big / (x_{1}^d, \ldots, x_{n}^d) = A_{0} \oplus A_{1} 
\oplus \cdots \oplus A_{m}$ be an Artinian monomial complete  intersection quotient with 
the same exponent $d$, where $m=n(d-1)$. 
Recall that the symmetric group $S_{n}$ acts on $\k[x_{1}, \ldots, x_{n}]$ by permuting the variables. Since the generators have the same exponents, we have a natural $S_{n}$-action on $A$.  
We have seen that $A$ has the  strong Lefschetz property in the narrow sense with a strong Lefschetz element 
$\ell=x_{1} + \cdots + x_{n}$. Define a linear map $E: A_{j+1} \rightarrow A_{j}$ by 
\begin{equation} \label{eq:E}
E(x_{1}^{a_{1}} \cdots x_{n}^{a_{n}}) = \sum_{k=1}^{n} a_{k} (d-a_{k}) x_{1}^{a_{1}} \cdots x_{k}^{a_{k}-1} \cdots x_{n}^{a_{n}}.
\end{equation}
Then the linear maps  $E$, $F=\times \ell$, $H:=[E, F]$ generate a Lie algebra which is isomorphic to $\sl_2$. 

\vskip 2mm

Clearly, each homogeneous subspace $A_{j}$ is invariant under the $S_{n}$-action on $A$. Since $\ell=x_{1} + \cdots + x_{n}$ is $S_{n}$-invariant, we have $F \circ \sigma = \sigma \circ F$ for all $\sigma \in S_{n}$. Furthermore, it can be shown that $E \circ \sigma = \sigma \circ E$ for all $\sigma \in S_{n}$ (cf. \cite{H-W2080}). That is, $E$ and $F$ are $S_{n}$-module homomorphisms.

\vskip 2mm 

One of the main goals of this paper is to construct explicit homogeneous bases of irreducible
$S_{n}$-submodules that appear as a direct summand in the $S_{n}$-module decomposition of $A$. 
Since $A$ is a completely reducible $\sl_{2}$-module, by Schur's Lemma, it suffices to construct
such bases for $\text{Ker}(E) \cap A_{j}$ $(j=0, 1, \ldots, \lfloor \frac{m}{2} \rfloor)$. 
For general $n$, $d$ and $j$, it is not yet known how to construct such an explicit basis directly. 
(In \cite{Maeno}, Maeno gave an inductive algorithm.) 
We expect the general case can be solved using the Young tabloid construction of 
Specht modules \cite{KKS2}. 

\vskip 2mm 

In this paper, we focus on the case when $n=3$ and deal with all the exponents $d \ge 3$ and all the homogeneous degrees $j \ge 0$. Recall that all the irreducible $S_{3}$-modules are classified as one of the following\,:
trivial representation, sign representaion and standard representation. We give an explicit construction of 
homogeneous bases for these irreducible $S_{3}$-submodules of $\text{Ker}(E) \cap A_{j}$ 
$(j=0, 1, \ldots, \lfloor \frac{m}{2} \rfloor)$. 
Moreover, we determine their multiplicities in the irreducible decomposition of $\text{Ker}(E)$ in each homogeneous subspace of $A$. 

\vskip 2mm 

For simplicity, we will write $A(d)$ for $A(n,d)$ and denote 
$$\text{mult}(d, j) = \dim (\text{Ker}(E) \cap A(d)_{j}) = \dim A(d)_{j} - \dim A(d)_{j-1},$$ 
the multiplicity of the irreducible highest weight $\sl_2$-module $V(3(d-1)-2j)$ in $A(d)_{j}$. 
Then using the Hilbert polynomial of $A(d)$, we obatin
\begin{equation} \label{eq:mult}
\text{mult}(d, j) = \begin{cases}
j+1  & \text{for} \ \ 0 \le j \le d-1, \\
3d - 2 - 2j & \text{for} \ \ d \le j \le \left \lfloor \dfrac{3(d-1)}{2} \right \rfloor.
\end{cases}
\end{equation}

\vskip 3mm

Now we state and prove our main results. 

\vskip 2mm 

{\bf (1) Trivial representation: } 

\vskip 2mm 

Given $d$ and $j$, since $S_{3}$ acts trivially, any basis polynomial of the trivial representation in $A(d)_{j}$ is a homogeneous symmetric polynomial of degree $j$, say, $P$. Hence it can be expressed as 
$$P=\sum_{\lambda = (a, b, c)} \alpha_{a, b, c} [x_{1}^{a}  x_{2}^{b} x_{3}^{c}],$$
where $\lambda = (a,b,c)$ is a partition of $j$ of size $3 \times (d-1)$ and  $[x_{1}^{a}  x_{2}^{b} x_{3}^{c}]$ denotes the $S_{3}$-orbit of the monomial $x_{1}^{a} x_{2}^{b} x_{3}^{c}$ (under the permutation action of variables). For instance, when $d \ge 4$, we have 
\begin{equation*}
\begin{aligned}
& [x_{1}^3] = x_{1}^{3} +x_{2}^{3} + x_{3}^{3}, \\
& [x_{1}^{2} x_{2}] = x_{1}^{2} x_{2} + x_{1}^{2} x_{3} + x_{2}^{2} x_{3} + x_{1} x_{2}^{2} + x_{1} x_{3}^{2} + x_{2} x_{3}^{2}, \\
& [x_{1} x_{2} x_{3}] = x_{1} x_{2} x_{3}.
\end{aligned}
\end{equation*}

Then the condition $E(P)=0$ would yield a system of linear equatons with variables $\alpha_{a,b,c}$. The solutions of this system give the desired explicit basis polynomials and the nullity of  the coefficient matrix is the multiplicity of the trivial representation in $\text{Ker}(E) \cap A(d)_{j}$. Note that the number of variables is the same as $p_{3, d-1}(j)$ and the number of equations is $p_{3, d-1}(j-1)$. Indeed, let ${\mathscr P}_{3,d-1}(j)=\{\lambda \mid \lambda=(a, b,c), d> a\ge b\ge c\ge 0\}$ with $a+b+c=j$. So the number of variables in $E(P)=0$ is $p_{3, d-1}(j)=\mid {\mathscr P}_{3,d-1}(j)\mid$. Moreover, notice that $\deg E(P)=j-1$. Let ${\mathscr P}_{3,d-1}(j)=\{\lambda_1,\dots,\lambda_s\}$ and let $\mu_1=(1,0,0),\mu_2=(0,1,0),\mu_3=(0,0,1)$. Then the collection of all partitions of the form $\lambda_i-\mu_j$ is ${\mathscr P}_{3,d-1}(j-1)$ (here $\lambda_i-\mu_j$ is not necessarily a partition). In other words, the number of equations from $E(P)=0$ is $p_{3,d-1}(j-1)=\mid {\mathscr P}_{3,d-1}(j-1)\mid$.


\begin{example} Let $n=3$, $d=4$, and $j=3$. Then ${\mathscr P}_{3,3}(3)=\{(3,0,0), (2,1,0), (1,1,1)\}$ and let
$$
\begin{array}{lllllllll}
P
& = & \alpha_{3,0,0}[x_1^3]+\alpha_{2,1,0}[x_1^2x_2]+\alpha_{1,1,1}[x_1x_2x_3] \\
& = & \alpha_{3,0,0}(x_1^3+x_2^3+x_3^3)+\alpha_{2,1,0}(x_1^2x_2+x_1^2x_3+x_2^2x_3+x_1x_2^2+x_1x_3^2+x_2x_3^2)+\alpha_{1,1,1}(x_1x_2x_3).
\end{array} 
$$
So $p_{3,3}(3)=\mid\{(3,0,0),(2,1,0),(1,1,1)\}\mid=3$ unknowns. Furthermore, 
\begin{align*}
&E(P)=3\alpha_{3,0,0}[x_1^2]+4\alpha_{2,1,0}[x_1x_2]+3\alpha_{2,1,0}[x_1^2]+3\alpha_{2,1,0}[x_1x_2]\\
&\phantom{E(P)}=(3\alpha_{3,0,0}+3\alpha_{2,1,0})[x_1^2]+ (4\alpha_{2,1,0}+3\alpha_{1,1,1})[x_1x_2]\\
&\phantom{E(P)}=  0
\end{align*} 
yields two equations, which is $p_{3,3}(2)=\mid \{(2,0,0),(1,1,0)\}\mid$.
\end{example}

The following lemma is easy to prove, but we introduce the proof for completeness.

\begin{lemma} \label{L:20190917-302} 
With notation as above, let $\mu=(a,b,c)\in {\mathscr P}_{3, d-1}(j-1)$. Then at least one of $\mu+(1,0,0), \mu+(0,1,0), \mu+(0,0,1)$ is a partition in ${\mathscr P}_{3,d-1}(j)$. 
\end{lemma}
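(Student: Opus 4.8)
The plan is to argue by contradiction: suppose that $\mu = (a,b,c) \in \mathscr{P}_{3,d-1}(j-1)$, so that $d > a \ge b \ge c \ge 0$ and $a+b+c = j-1$, yet none of $\mu+(1,0,0)$, $\mu+(0,1,0)$, $\mu+(0,0,1)$ lies in $\mathscr{P}_{3,d-1}(j)$. First I would observe that adding $1$ to any coordinate keeps the total sum equal to $j$, so the only way $\mu + e_i$ can fail to be in $\mathscr{P}_{3,d-1}(j)$ is for one of two reasons: either the resulting triple is not weakly decreasing (i.e.\ not a partition), or some entry becomes $\ge d$. I would then go through the three cases $i=1,2,3$ and see what each failure forces.

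For $\mu + (0,0,1) = (a,b,c+1)$: this is a partition iff $b \ge c+1$, and it satisfies the size bound automatically since $c+1 \le b \le a < d$. So the only way this can fail is $c = b$. For $\mu+(0,1,0) = (a,b+1,c)$: weak decrease holds on the right since $b+1 > c$; on the left we need $a \ge b+1$, and the size bound needs $b+1 < d$, i.e.\ $b \le d-1$, which holds. So this fails only if $a = b$ or $b = d-1$. For $\mu + (1,0,0) = (a+1,b,c)$: weak decrease is automatic, so it fails only if $a + 1 \ge d$, i.e.\ $a = d-1$ (recall $a < d$). Thus, assuming all three fail, we must have $c = b$, and ($a = b$ or $b = d-1$), and $a = d-1$.

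Now I would derive the contradiction. From $a = d-1$ we have, combining with the second condition, either $b = a = d-1$ or $b = d-1$; in both cases $b = d-1$, hence (from $c = b$) also $c = d-1$. So $\mu = (d-1, d-1, d-1)$, which gives $j - 1 = 3(d-1)$, i.e.\ $j = 3(d-1)+1 = m+1$. But the lemma is only invoked for $j - 1$ in the range where $\mathscr{P}_{3,d-1}(j-1)$ is relevant to the map $E \colon A(d)_j \to A(d)_{j-1}$, namely $j \le \lfloor 3(d-1)/2 \rfloor$ (or at the very least $j \le m$); since $m + 1 > m \ge \lfloor 3(d-1)/2 \rfloor$, no such $\mu$ exists in $\mathscr{P}_{3,d-1}(j-1)$ in the first place, contradicting our hypothesis. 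Hence at least one of the three shifts is a valid partition in $\mathscr{P}_{3,d-1}(j)$.

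I do not anticipate a serious obstacle here, since the argument is a short finite case check. The one point requiring a little care is the bookkeeping on exactly which inequality fails in each of the three cases — in particular making sure I have not conflated "fails to be a partition" with "violates the size bound," and correctly using $a < d$ (strict) versus $a \le d-1$. The only genuinely substantive input is recognizing that the extremal obstruction is the all-$(d-1)$ partition, which sits in degree $m+1$ and is therefore outside the range in which the lemma is applied; I would state explicitly at the start that $j$ ranges over $0 \le j \le \lfloor 3(d-1)/2 \rfloor$ (equivalently $\mu \in \mathscr{P}_{3,d-1}(j-1)$ presupposes $j - 1 < 3(d-1)$) so that this last step is justified.
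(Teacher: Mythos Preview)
Your argument is correct and matches the paper's: both reduce to the observation that all three shifts fail simultaneously only when $\mu=(d-1,d-1,d-1)$, which the standing hypothesis $j\le\big\lfloor\tfrac{3(d-1)}{2}\big\rfloor$ rules out; the paper phrases this as a direct case split (first noting $c<d-1$, then handling $c<b$, $b=c<a$, $a=b=c$ in turn) rather than by contradiction, but the content is identical. One small slip to clean up: ``$b+1<d$, i.e.\ $b\le d-1$, which holds'' should read $b\le d-2$, and it does \emph{not} automatically hold --- but you then (correctly) list $b=d-1$ as a failure mode anyway, so the conclusion is unaffected.
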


\begin{proof} First note that since $j\le \Big\lfloor \frac{3(d-1)}{2}\Big\rfloor <3(d-1)$, we get that $c<d-1$. If $c<b$, then $\mu+(0,0,1)=(a,b,c+1)$ is a partition in ${\mathscr P}_{3,d-1}(j)$. If $b=c<a$, then $\mu+(0,1,0)=(a,b+1,c)$ is a partition in ${\mathscr P}_{3,d-1}(j)$. If $a=b=c<d-1$, then $\mu+(1,0,0)=(a+1,b,c)$ is a partition in ${\mathscr P}_{3,d-1}(j)$ as well. 
\end{proof} 

For $\lambda =(a, b, c)$ with $d>a\ge b\ge c\ge 0$, we denote $x^{\lambda} = x_{1}^{a} x_{2}^{b} x_{3}^{c}$ and write 
$$
E([x^\lambda])=E([x_1^a x_2^b x_3^c]) := C_{\mu_1,\lambda} [x_1^{a-1} x_2^b x_3^c]+C_{\mu_2,\lambda} [x_1^{a} x_2^{b-1} x_3^c]+
C_{\mu_3,\lambda} [x_1^{a} x_2^b x_3^{c-1}],
$$
where $\mu_1=(a-1,b,c), \mu_2=(a,b-1,c)$, and $\mu_3=(a,b,c-1)$. Note that if $\mu_i$ is not a partition, then
we have $C_{\mu_i,\lambda}=0$. 

\medskip

In general, let ${\mathscr P}_{3, d-1}(j)=\{\lambda_{1}, \ldots, \lambda_{t}\}$ be the collection of partitions of $j$ and let ${\mathscr P}_{3, d-1}(j-1)=\{\mu_{1}, \ldots, \mu_{s}\}$ be the collection of partitions of $j-1$ listed by the lexicographic ordering. Thus $t=p_{3, d-1}(j)$ and $s=p_{3, d-1}(j-1)$.   
Let 
$P=\alpha_{1} [x^{\lambda_{1}}] + \cdots + \alpha_{t} [x^{\lambda_{t}}]$ be a homogeneous symmetric polynomial of degree $j$. Then the condition $E(P)=0$ yields a system of linear equations in the variable $\alpha_{1}, \ldots, \alpha_{t}$ with coefficient matrix $C=(C_{pq})$ for $(1\le p \le s,1\le q \le t)$, where $C_{pq}:=C_{\mu_p,\lambda_q}$ such that 
$$
E\left(\alpha_q \left[x^{\lambda_q}\right]\right)=
\sum_{p=1} ^s \alpha_q C_{\mu_p,\lambda_q}\left[x^{\mu_p}\right]:=\sum_{p=1} ^s \alpha_q C_{pq} \left[x^{\mu_p}\right].
$$ 
Observe that the coefficients $C_{pq}=0$ unless $\lambda_q=\mu_{p} + (1,0,0)$, $\lambda_q=\mu_{p} + (0,1,0)$, or $\lambda_q=\mu_{p} + (0,0,1)$ and they are partitions. In other words, $C_{pq}=0$ unless the term $[x^{\mu_{p}}]$ arises as an image of $[x^{\mu_{p}+(1,0,0)}]$, $[x^{\mu_{p}+(0, 1,0)}]$, or $[x^{\mu_{p}+(0,0,1)}]$ under $E$. Also note that at least one of them is a partition.

\medskip

From this observation, we have
$$
\begin{array}{llccccc}
&C_{\mu_1,\lambda_1} \alpha_1[x^{\mu_1}]+\cdots+C_{\mu_s,\lambda_1} \alpha_1[x^{\mu_s}]+\\
&C_{\mu_1,\lambda_2} \alpha_2[x^{\mu_1}]+\cdots+C_{\mu_s,\lambda_2} \alpha_2[x^{\mu_s}]+\\
&\hskip 2.66 true cm \vdots \\
&C_{\mu_1,\lambda_t} \alpha_t[x^{\mu_1}]+\cdots+C_{\mu_s,\lambda_t} \alpha_t [x^{\mu_s}]\phantom{+}\\[1ex] 
= & (C_{\mu_1,\lambda_1} \alpha_1+C_{\mu_1,\lambda_2} \alpha_2+\cdots+C_{\mu_1,\lambda_t} \alpha_t)[x^{\mu_1}]+\\
   & (C_{\mu_2,\lambda_1} \alpha_1+C_{\mu_2,\lambda_2} \alpha_2+\cdots+C_{\mu_2,\lambda_t} \alpha_t)[x^{\mu_2}]+\\
   & \hskip 2.66 true cm \vdots \\
   & (C_{\mu_{s-1},\lambda_1} \alpha_1+C_{\mu_{s-1},\lambda_2} \alpha_2+\cdots+C_{\mu_{s-1},\lambda_t} \alpha_t)[x^{\mu_{s-1}}]+\\
   & (C_{\mu_s,\lambda_1} \alpha_1+C_{\mu_s,\lambda_2} \alpha_2+\cdots+C_{\mu_s,\lambda_t} \alpha_t)[x^{\mu_s}]\\
= & 0.   
\end{array} 
$$
Hence we obtain the following linear system 
\begin{equation}\label{EQ:20190918-303}
\begin{aligned}
& C_{11}  \alpha_{1} + \cdots + C_{1t}  \alpha_{t}=0, \\
& C_{21}  \alpha_{1} + \cdots + C_{2t}  \alpha_{t} =0, \\
& \hskip 15mm  \vdots \\
& C_{s1}  \alpha_{1} + \cdots + C_{st}  \alpha_{t} =0.
\end{aligned}
\end{equation}
Let $C(a)$ be the submatrix of $C$ such that i) the non-zero entries are determined by the partitions $\mu \in {\mathscr P}_{3, d-1}(j-1)$, $\lambda \in {\mathscr P}_{3, d-1}(j)$, where the first component of $\mu$ is $a$ and the first component of $\lambda$ is $a$ or $a+1$, ii) the other entries are $0$.

\vskip 3mm 

We now consider the following example.

\begin{example}\label{example3.2}  (a) Let $d=3$, $j=3$. Then $
{\mathscr P}_{3, d-1}(3)=\{\lambda_1=(2,1,0), \lambda_2=(1,1,1)\}$ 
and ${\mathscr P}_{3,d-1}(2)=\{\mu_1=(2,0,0), \mu_2=(1,1,0)\}$. 
The basis element of the trivial representation $P$ can be written as 
$$
\begin{array}{llllllllllll} 
P= \alpha_1[x^{\lambda_1}]+\alpha_2[x^{\lambda_2}]=\alpha_{1} [x_{1}^{2} x_{2}] + \alpha_{2} [x_{1} x_{2} x_{3}].
\end{array} 
$$
Hence we have
$$
\begin{array}{llllllll}
E([x^{\lambda_1}])
& = & E(x_1^2x_2+x_1^2x_3+x_1x_2^2+ x_1x_3^2+x_2^2x_3+x_2x_3^2) \\
& = & 2 x_1x_2+2x_1^2+
          2 x_1x_3+2x_1^2+ 
          2 x_2^2+2x_1x_2+  
          2 x_3^2+2x_1x_3+
          2 x_2x_3+2x_2^2+
          2 x_3^2+2x_2x_3\\
& = & 4[x^{\mu_1}]+4[x^{\mu_2}], \quad \text{and} \\[1ex] 
E([x^{\lambda_2}])
& = & E(x_1x_2x_3) \\
& = & 2 x_1x_2+ 2 x_1x_3+2x_2x_3\\
& = & 2[x^{\mu_2}].
\end{array} 
$$
Therefore we get
$$
\begin{array}{lllllll} 
C_{\mu_1,\lambda_1}=C_{11}=4, \\
C_{\mu_1,\lambda_2}=C_{12}=0, \\
C_{\mu_2,\lambda_1}=C_{21}=4, \\
C_{\mu_2,\lambda_2}=C_{22}=2.
\end{array} 
$$
This means that
$$
C=\begin{pmatrix}
C_{11} & C_{12} \\
C_{21} & C_{22} 
\end{pmatrix} 
=\begin{pmatrix}
4  & 0 \\
4  & 2  
\end{pmatrix}.
$$
In other words,  the linear system
$$
C\begin{pmatrix}
a_1 \\
a_2  
\end{pmatrix}=0
$$
has only the trivial solution. Hence there is no trivial representation in $\text{Ker}(E) \cap A(3)_{3}$. Note that $p_{3,3}(3) - p_{3,3}(2) = 2 - 2 =0$.

\vskip 2 true mm

Now we have
$$
C=
\begin{pmatrix}
C_{11}&C_{12}\\
C_{21}&C_{22}
\end{pmatrix}
=
\begin{pmatrix}
C_{\mu_1,\lambda_1}&C_{\mu_1,\lambda_2}\\
C_{\mu_2,\lambda_1}&C_{\mu_2,\lambda_2}
\end{pmatrix}
=
\begin{pmatrix}
C_{(2,0,0),(2,1,0)}&0\\
C_{(1,1,0),(2,1,0)}&C_{(1,1,0),(1,1,1)}
\end{pmatrix}
=
\begin{pmatrix}
C(2)\\
C(1)
\end{pmatrix}
$$
and then the rank of $C$ is equal to $2$ which is a sum of the number of rows of $C(a), a=2,1$. 

\vskip 2mm 

(b) Let $d=7$, $j=6$. Then we have 
$$ 
{\mathscr P}_{3, d-1}(6)=\{\lambda_1=(6,0,0), \lambda_2=(5,1,0), \lambda_3=(4,2,0), \lambda_4=(4,1,1), \lambda_5=(3,3,0),\lambda_6=(3,2,1),\lambda_7=(2,2,2)\}
$$
and 
$$
{\mathscr P}_{3, d-1}(5)=\{\mu_1=(5,0,0), \mu_2=(4,1,0), \mu_3=(3,2,0), \mu_4=(3,1,1), \mu_5=(2,2,1)\}.
$$
Thus the basis polynomial $P$ can be written as 
$$
P= \alpha_{1} [x_{1}^{6}] + \alpha_{2}  [x_{1}^5 x_{2}] + \alpha_{3} [x_{1}^4 x_{2}^2 ] + \alpha_{4} [x_{1}^4 x_{2} x_{3}] + \alpha_{5} [x_{1}^{3} x_{2}^{3}] + \alpha_{6} [x_{1}^{3} x_{2}^{2} x_{3}] + \alpha_{7}[x_{1}^{2} x_{2}^{2} x_{3}^{2}].
$$ 
Using the same method as above, we obtain that
$$
\begin{array}{lllllllll}
E([x^{\lambda_1}])
& = & 6[x_1^5] &  = & C_{11} [x^{\mu_1}] ,\\
E([x^{\lambda_2}])
& = & 12[x_1^5]+10 [x_1^4x_2] &  = & C_{12} [x^{\mu_1}] +C_{22}[x^{\mu_2}],\\
E([x^{\lambda_3}])
& = & 10 [x_1^4x_2] +12[x_1^3x_2^2]& = & C_{23} [x^{\mu_2}]+C_{33}[x^{\mu_3}], \\
E([x^{\lambda_4}])
& = & 6 [x_1^4x_2] +12[x_1^3x_2x_3]&  = & C_{24} [x^{\mu_2}]+C_{44}[x^{\mu_4}], \\
E([x^{\lambda_5}])
& = & 12 [x_1^3x_2^2]  & = & C_{35} [x^{\mu_3}], \\
E([x^{\lambda_6}])
& = & 6 [x_1^3x_2^2]  +   20 [x_1^3x_2 x_3]  + 24 [x_1^2x_2^2x_3]  & = & C_{36} [x^{\mu_3}]+  C_{46} [x^{\mu_4}]+  C_{56} [x^{\mu_5}], \\
E([x^{\lambda_7}])
& = & 10 [x_1^2x_2^2x_3]  &  = & C_{57} [x^{\mu_5}].
\end{array} 
$$
Thus
$$
\begin{array}{llllllllll}
C=
\begin{pmatrix}
6 & 12 & 0 & 0 & 0 & 0 & 0 \\
0 & 10 & 10 & 6 & 0 & 0 & 0 \\
0 & 0 & 12 & 0 & 12 & 6 & 0 \\
0 & 0 & 0 & 12 & 0 & 20 & 0 \\
0 & 0 & 0 & 0 & 0 & 24 & 10 
\end{pmatrix},
\end{array}
$$
so we have the following linear system
$$
\alpha_{1} + 2\alpha_{2}=0, \ \ 5 \alpha_{2} + 5 \alpha_{3} + 3 \alpha_{4} = 0,\ \ 2 \alpha_{3} + 2 \alpha_{5} + \alpha_{6} = 0,   \ \ 3\alpha_{4} + 5 \alpha_{6} = 0, \ \ 12\alpha_{6} + 5 \alpha_{7} =0.
$$ 
Since the rank of $C$ is $5$, which is $p_{3,6}(5)$, there are two copies of trivial representation in $\text{Ker}(E) \cap A(7)_{6}$ and we can find explicit basis polynomial of each of them by choosing appropriate values of $a_5$ and $a_7$. (Of course, they should be linearly independent.) One can easily verify that $\text{triv}(7,6) = p_{3, 6} (6) - p_{3, 6}(5) = 7 - 5 = 2$. 
Also we have $C=(C_{pq})$ as follows:
\begin{align*}
&C={\tiny
\begin{pmatrix}
C_{(5,0,0),(6,0,0)}&C_{(5,0,0),(5,1,0)}&0&0&0&0&0\\
0&C_{(4,1,0),(5,1,0)}&C_{(4,1,0),(4,2,0)}&C_{(4,1,0),(4,1,1)}&0&0&0\\
0&0&C_{(3,2,0),(4,2,0)}&0&C_{(3,2,0),(3,3,0)}&C_{(3,2,0),(3,2,1)}&0\\
0&0&0&C_{(3,1,1),(4,1,1)}&0&C_{(3,1,1),(3,2,1)}&0\\
0&0&0&0&0&C_{(2,2,1),(3,2,1)}&C_{(2,2,1),(2,2,2)}
\end{pmatrix}}.
\end{align*}
Then the submatrices $C(a)$ are given as
\begin{align*}
&C(5)=\begin{pmatrix}C_{(5,0,0),(6,0,0)}&C_{(5,0,0),(5,1,0)}&0&0&0&0&0\end{pmatrix},\\
&C(4)=\begin{pmatrix}0&C_{(4,1,0),(5,1,0)}&C_{(4,1,0),(4,2,0)}&C_{(4,1,0),(4,1,1)}&0&0&0\end{pmatrix},\\
&C(3)= \begin{pmatrix}0&0&C_{(3,2,0),(4,2,0)}&0&C_{(3,2,0),(3,3,0)}&C_{(3,2,0),(3,2,1)}&0\\
0&0&0&C_{(3,1,1),(4,1,1)}&0&C_{(3,1,1),(3,2,1)}&0  \end{pmatrix},\\
&C(2)=\begin{pmatrix}0&0&0&0&0&C_{(2,2,1),(3,2,1)}&C_{(2,2,1),(2,2,2)}\end{pmatrix}.
\end{align*}
Note that each $C(a)$ is in row echelon form, and so its rank is equal to 
the number of rows in $C(a)$ for $a=5,4,3,2$. Hence the matrix $C$ is in block lower triangular form and we have 
$$
\text{rank}\, C = \sum_{a=2}^5 \text{rank}\, C(a) = \sum_{a=2}^5 (\text{the number of rows in $C(a)$}) 
= \text{the number of rows in $C$}.
$$

(c) Let $d=7$, $j=8$. Then we have
$$ 
\begin{array}{lllllll}
{\mathscr P}_{3, d-1}(8)
& = & \left\{
\begin{matrix}
\lambda_1=(6,2,0), \lambda_2=(6,1,1), \lambda_3=(5,3,0), \lambda_4=(5,2,1),\\ \lambda_5=(4,4,0),\lambda_6=(4,3,1),\lambda_7=(4,2,2), \lambda_8=(3,3,2)\end{matrix} 
\right\}, \quad \text{and} \\[3ex] 
{\mathscr P}_{3, d-1}(7)
& = & \left\{
\begin{matrix}
\mu_1=(6,1,0), \mu_2=(5,2,0), \mu_3=(5,1,1), \mu_4=(4,3,0), \\
\mu_5=(4,2,1), \mu_6=(3,3,1), \mu_7=(3,2,2)
\end{matrix} 
\right\}.
\end{array} 
$$
Thus the basis polynomial $P$ can be written as 
$$
P= \alpha_1[x_1^6x_2^2] + \alpha_2  [x_1^6x_2x_3] + \alpha_3 [x_1^5 x_2^3 ] + \alpha_4 [x_1^5 x_2^2 x_3] + \alpha_5 [x_1^4 x_2^4] + \alpha_6 [x_1^4 x_2^3 x_3] + \alpha_7[x_1^4 x_2^2 x_3^2]+a_8[x_1^3x_2^3x_3^2].
$$ 
By a simple calculation as above, we get that
$$
\begin{array}{llllllllllllll}
E([x^{\lambda_1}])
& = & 10[x^{\mu_1}]+6[x^{\mu_2}] & = & C_{11}[x^{\mu_1}]+C_{21} [x^{\mu_2}],\\
E([x^{\lambda_2}])
& = & 6[x^{\mu_1}]+6[x^{\mu_3}] & = & C_{12}[x^{\mu_1}]+C_{32} [x^{\mu_3}],\\
E([x^{\lambda_3}])
& = & 12[x^{\mu_2}]+10[x^{\mu_4}] & = & C_{23}[x^{\mu_2}]+C_{43} [x^{\mu_4}],\\
E([x^{\lambda_4}])
& = & 6[x^{\mu_2}]+20[x^{\mu_3}] +10[x^{\mu_5}] & = & C_{24}[x^{\mu_2}]+C_{34} [x^{\mu_3}]+C_{54} [x^{\mu_5}],\\
E([x^{\lambda_5}])
& = & 12[x^{\mu_4}] & = & C_{45}[x^{\mu_4}],\\
E([x^{\lambda_6}])
& = & 6[x^{\mu_4}]+12[x^{\mu_5}] +12[x^{\mu_6}] & = & C_{46}[x^{\mu_4}]+C_{56} [x^{\mu_5}]+C_{66} [x^{\mu_6}],\\
E([x^{\lambda_7}])
& = & 10[x^{\mu_5}]+12[x^{\mu_7}] & = & C_{57}[x^{\mu_5}]+C_{77} [x^{\mu_7}],\\
E([x^{\lambda_8}])
& = & 10[x^{\mu_6}]+12[x^{\mu_7}] & = & C_{68}[x^{\mu_6}]+C_{78} [x^{\mu_8}],
\end{array} 
$$
so
$$
C=
\begin{pmatrix}
10 & 6 & 0 & 0 & 0 & 0 & 0 & 0 \\ 
6 & 0 & 12 & 6 & 0 & 0 & 0 & 0 \\ 
0 & 6 & 0 & 20 & 0 & 0 & 0 & 0 \\ 
0 & 0 & 10 & 0 & 12 & 6 & 0 & 0 \\ 
0 & 0 & 0 & 10 & 0 & 12 & 10 & 0 \\
0 & 0 & 0 & 0 & 0 & 12 & 0 & 10  \\
0 & 0 & 0 & 0 & 0 & 0 & 12 & 12 
\end{pmatrix}.
$$
Since the rank of $C$ is $7$, which is $p_{3,6}(6)$, there is one copy of trivial representation in $\text{Ker}(E) \cap A(7)_{8}$ and we can find explicit basis polynomial by choosing appropriate values of $a_8$. One can easily verify that $\text{triv}(7,8) = p_{3, 6} (8) - p_{3, 6}(7) = 8 - 7 = 1$. 
Also we have $C=(C_{pq})$ as follows:
\begin{align*}
&C=
{\tiny
\begin{pmatrix}
C_{(6,1,0),(6,2,0)}&C_{(6,1,0),(6,1,1)}&0&0&0&0&0&0\\
C_{(5,2,0),(6,2,0)}&0&C_{(5,2,0),(5,3,0)}&C_{(5,2,0),(5,2,1)}&0&0&0&0\\
0&C_{(5,1,1),(6,1,1)}&0&C_{(5,1,1),(5,2,1)}&0&0&0&0\\
0&0&C_{(4,3,0),(5,3,0)}&0&C_{(4,3,0),(4,4,0)}&C_{(4,3,0),(4,3,1)}&0&0\\
0&0&0&C_{(4,2,1),(5,2,1)}&0&C_{(4,2,1),(4,3,1)}&C_{(4,2,1),(4,2,2)}&0\\
0&0&0&0&0&C_{(3,3,1),(4,3,1)}&0&C_{(3,3,1),(3,3,2)}\\
0&0&0&0&0&0&C_{(3,2,2),(4,2,2)}&C_{(3,2,2),(3,3,2)}
\end{pmatrix}}
\end{align*}
Then the submatrices $C(a)$ are given as
\begin{align*}
&C(6)=\begin{pmatrix}C_{(6,1,0),(6,2,0)}&C_{(6,1,0),(6,1,1)}&0&0&0&0&0&0\end{pmatrix},\\
&C(5)=\begin{pmatrix}C_{(5,2,0),(6,2,0)}&0&C_{(5,2,0),(5,3,0)}&C_{(5,2,0),(5,2,1)}&0&0&0&0\\
                                   0&C_{(5,1,1),(6,1,1)}&0&C_{(5,1,1),(5,2,1)}&0&0&0&0\end{pmatrix},\\
&C(4)=\begin{pmatrix}0&0&C_{(4,3,0),(5,3,0)}&0&C_{(4,3,0),(4,4,0)}&C_{(4,3,0),(4,3,1)}&0&0\\
                                   0&0&0&C_{(4,2,1),(5,2,1)}&0&C_{(4,2,1),(4,3,1)}&C_{(4,2,1),(4,2,2)}&0\end{pmatrix},\\
&C(3)= \begin{pmatrix}0&0&0&0&0&C_{(3,3,1),(4,3,1)}&0&C_{(3,3,1),(3,3,2)}\\
                                    0&0&0&0&0&0&C_{(3,2,2),(4,2,2)}&C_{(3,2,2),(3,3,2)}\end{pmatrix}.
\end{align*}
Note that each $C(a)$ is in row echelon form and its rank is equal to 
the number of rows in $C(a)$ for $a=6,5,4,3$. Hence the matrix $C$ is in block lower triangular form and we have 
$$\text{rank}\, C = \sum_{a=3}^6 \text{rank}\, C(a) = \sum_{a=3}^6 (\text{the number of rows in $C(a)$}) 
= \text{the number of rows in $C$}.$$

(d) Let $d=9$, $j=9$. Then we have
$$ 
\begin{array}{lllllll}
{\mathscr P}_{3, d-1}(9)
& = & \left\{
\begin{matrix}
\lambda_1=(8,1,0), \lambda_2=(7,2,0), \lambda_3=(7,1,1), \lambda_4=(6,3,0), \lambda_5=(6,2,1), \lambda_6=(5,4,0),\\ 
\lambda_7=(5,3,1), \lambda_8=(5,2,2), \lambda_9=(4,4,1), \lambda_{10}=(4,3,2),\lambda_{11}=(3,3,3) \end{matrix} 
\right\} \quad \text{and} \\[3ex] 
{\mathscr P}_{3, d-1}(8)
& = & \left\{
\begin{matrix}
\mu_1=(8,0,0), \mu_2=(7,1,0), \mu_3=(6,2,0), \mu_4=(6,1,1), \mu_5=(5,3,0),  \\
\mu_6=(5,2,1), \mu_7=(4,4,0), \mu_8=(4,3,1), \mu_9=(4,2,2), \mu_{10}=(3,3,2)
\end{matrix} 
\right\}.
\end{array} 
$$
Thus the basis polynomial $P$ can be written as 
$$
\begin{array}{lllllllll} 
P
& = & \alpha_1[x_1^8x_2] + \alpha_2  [x_1^7x_2^2] + \alpha_3 [x_1^7 x_2 x_3 ] + \alpha_4 [x_1^6 x_2^3] + \alpha_5 [x_1^6 x_2^2 x_3] + \alpha_6 [x_1^5 x_2^4] + \alpha_7[x_1^5 x_2^3 x_3^2]+\\
&   & \alpha_8[x_1^4x_2^4 x_3 ] +\alpha_9[x_1^4x_2^3 x_3^2 ] +\alpha_{10}[x_1^4x_2^3 x_3^2 ]+\alpha_{11}[x_1^4x_2^3 x_3^2 ].
\end{array} 
$$ 
By a simple calculation as above, we get that
$$
\begin{array}{llllllllllllll}
E([x^{\lambda_1}])
& = & 16[x^{\mu_1}]+8[x^{\mu_2}] & = & C_{11}[x^{\mu_1}]+C_{21} [x^{\mu_2}],\\
E([x^{\lambda_2}])
& = & 14[x^{\mu_2}]+14[x^{\mu_3}] & = & C_{22}[x^{\mu_2}]+C_{32} [x^{\mu_3}],\\
E([x^{\lambda_3}])
& = & 8[x^{\mu_2}]+14[x^{\mu_4}] & = & C_{23}[x^{\mu_2}]+C_{43} [x^{\mu_4}],\\
E([x^{\lambda_4}])
& = & 18[x^{\mu_3}]+18[x^{\mu_5}]  & = & C_{34}[x^{\mu_3}]+C_{54} [x^{\mu_4}],\\
E([x^{\lambda_5}])
& = & 8[x^{\mu_3}] + 28[x^{\mu_4}] + 18[x^{\mu_6}] & = & C_{45}[x^{\mu_4}],\\
E([x^{\lambda_6}])
& = & 20[x^{\mu_5}] +20[x^{\mu_7}] & = & C_{56} [x^{\mu_5}]+C_{76} [x^{\mu_7}],\\
E([x^{\lambda_7}])
& = & 8[x^{\mu_5}]+18[x^{\mu_6}] +20[x^{\mu_8}]& = & C_{57}[x^{\mu_5}]+C_{67} [x^{\mu_6}]+C_{87} [x^{\mu_8}],\\
E([x^{\lambda_8}])
& = & 14[x^{\mu_6}]+20[x^{\mu_9}] & = & C_{68} [x^{\mu_6}]+C_{98}[x^{\mu_9}],\\
E([x^{\lambda_9}])
& = & 8[x^{\mu_7}]+16[x^{\mu_8}] & = & C_{79} [x^{\mu_8}]+C_{89}[x^{\mu_8}]+C_{8,10},\\
E([x^{\lambda_{10}}])
& = & 14[x_{\mu_8}]+36[x^{\mu_9}]+20[x^{\mu_{10}}] & = &C_{8,10} [x^{\mu_8}]+ C_{9,10} [x^{\mu_9}]+C_{10,10}[x^{\mu_{10}}], \\
E([x^{\lambda_{11}}])
& = & 18[x^{\mu_{10}}] & = & C_{10,11}[x^{\mu_{10}}],
\end{array} 
$$
so
$$
C=\left(
\begin{array}{cccccccccccccccccccccccc}
16 &   0 &   0 &   0 &    0 & 0 &   0 &   0 &  0 &   0 & 0 \\ 
8   & 14 &   8 &   0 &   0 &  0 &   0 &   0 &  0 &   0 & 0 \\ 
0   & 14 &   0 & 18 &   8 &  0 &   0 &   0 &  0 &   0 & 0 \\ 
0   &   0 & 14 &  0  & 28 &  0 &   0 &   0 &  0 &   0 & 0 \\ 
0   &   0 &  0 &  18 &  0 & 20 &   8 &   0 &  0 &   0 & 0 \\
0   &   0 & 0 & 0    & 18 &   0 & 18 & 14 &  0 &   0 & 0 \\
0   &   0 & 0 & 0   &   0 &  20 &   0 &   0 &  8 &   0 & 0 \\
0   &   0 & 0 & 0   &   0 &    0 & 20 &   0 &  16 &   14 & 0 \\
0   &   0 & 0 & 0   &   0 &    0 &   0 &   20 & 0  &   36 & 0 \\
0   &   0 & 0 & 0   &   0 &   0 &   0 &  0 &  0 &   20 & 18 
\end{array}\right) .
$$
Since the rank of $C$ is $10$, which is $p_{3,8}(8)$, there is one copy of trivial representation in $\text{Ker}(E) \cap A(9)_{9}$ and we can find explicit basis polynomial by choosing appropriate values of $a_{11}$. One can easily verify that $\text{triv}(9,9) = p_{3, 8} (9) - p_{3, 8}(8) = 11 - 10= 1$. 
Then the submatrices $C(a)$ are given as
\begin{align*}
&C(8)=\begin{pmatrix}C_{(8,0,0),(8,1,0)}&0&0&0&0&0&0&0&0&0~~~0\end{pmatrix},\\
&C(7)=\begin{pmatrix}C_{(7,1,0),(8,1,0)}&C_{(7,1,0),(7,2,0)}&C_{(7,1,0),(7,1,1)}&0&0&0&0&0&0&0~~~0\end{pmatrix},\\
&C(6)=\begin{pmatrix}0&C_{(6,2,0),(7,2,0)}&0&C_{(6,2,0),(6,3,0)}&C_{(6,2,0),(6,2,1)}&0&0&0&0&0~~~0\\ 
                                    0&0&C_{(6,1,1),(7,1,1)}&0&C_{(6,1,1),(6,2,1)}&0&0&0&0&0~~~0\end{pmatrix},\\
&C(5)=\begin{pmatrix}0&0&0&C_{(5,3,0),(6,3,0)}&0&C_{(5,3,0),(5,4,0)}&0&0&0&0~~~0\\
                                   0&0&0&0&C_{(5,2,1),(6,2,1)}&0&C_{(5,2,1),(5,3,1)}&0&0&0~~~0\end{pmatrix},\\
&C(4)=\begin{pmatrix}0&0&0&0&0&C_{(4,4,0),(5,4,0)}&0&0&C_{(4,4,0),(4,4,1)}&0~~~~~~~~~~0\\
                                    0&0&0&0&0&0&C_{(4,3,1),(5,3,1)}&0&C_{(4,3,1),(4,4,1)}&C_{(4,3,1),(4,3,2)}~~~0\\
                                    0&0&0&0&0&0&0&C_{(4,2,2),(5,2,2)}&0&C_{(4,2,2),(4,3,2)}~~~~~~0\end{pmatrix},\\ 
&C(3)=\begin{pmatrix} 0&0&0&0&0&0&0&0&0&C_{(3,3,2),(4,3,2)}~~~C_{(3,3,2),(3,3,3)}\end{pmatrix}.                                                                  
\end{align*}
Note that each $C(a)$ is in row echelon form and its rank is equal to 
the number of rows in $C(a)$ for $a=8,7,6,5,4,3$. Hence the matrix $C$ is in block lower triangular form and we have 
$$\text{rank}\, C = \sum_{a=3}^8 \text{rank}\, C(a) = \sum_{a=3}^8 (\text{the number of rows in $C(a)$}) 
= \text{the number of rows in $C$}.$$
\end{example}

\begin{lem}\label{L:20190930-304} With notations as above, for $d>j>0$, $C$ is a matrix with row echelon form and the rank of $C$ is the number of rows of $C$, which is $p_{3,d-1}(j-1)$. 
\end{lem}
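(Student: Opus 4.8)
The plan is to show that $C$ is already in row echelon form with a pivot in every one of its $s:=p_{3,d-1}(j-1)$ rows; the rank assertion then follows at once, and this also explains the block lower triangular pattern observed in Example~\ref{example3.2}.

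First I would fix the bookkeeping. The rows of $C$ are indexed by $\mathscr P_{3,d-1}(j-1)=\{\mu_1,\dots,\mu_s\}$ and its columns by $\mathscr P_{3,d-1}(j)=\{\lambda_1,\dots,\lambda_t\}$, both listed in decreasing lexicographic order, and by the observation recorded just before Example~\ref{example3.2}, the row indexed by $\mu=(a,b,c)$ can be nonzero only in the columns indexed by $\mu+(1,0,0)$, $\mu+(0,1,0)$, $\mu+(0,0,1)$, and then only for those among the three that actually lie in $\mathscr P_{3,d-1}(j)$. The point where the hypothesis $d>j$ enters is the following: since $a\le a+b+c=j-1\le d-2$ we have $a+1\le d-1$, while $a+1>a\ge b\ge c\ge 0$, so $\mu+(1,0,0)=(a+1,b,c)$ always belongs to $\mathscr P_{3,d-1}(j)$, and moreover the entry $C_{\mu,\,\mu+(1,0,0)}$ is nonzero: unwinding \eqref{eq:E}, a monomial in the $S_3$-orbit of $x^{\mu+(1,0,0)}$ can be carried by $E$ into the orbit of $x^{\mu}$ only by lowering the unique exponent equal to $a+1$ (lowering an exponent equal to $b$ or $c$, both of which are $<a+1$, cannot yield the multiset $\{a,b,c\}$), and each such monomial contributes the strictly positive factor $(a+1)(d-a-1)$, so $C_{\mu,\,\mu+(1,0,0)}$ is a positive multiple of $(a+1)(d-a-1)$.

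Next I would locate the pivots. Of the (at most three) columns $(a+1,b,c)$, $(a,b+1,c)$, $(a,b,c+1)$ in which the row $\mu=(a,b,c)$ can be nonzero, one has $(a+1,b,c)>(a,b+1,c)>(a,b,c+1)$ in lexicographic order, so $\mu+(1,0,0)$ carries the smallest column index of the three; since its entry is nonzero, it is the leftmost nonzero entry of the $\mu$-row, i.e., the pivot of that row. Now adding $(1,0,0)$ to a triple preserves the lexicographic order, so $\mu_1>\cdots>\mu_s$ yields $\mu_1+(1,0,0)>\cdots>\mu_s+(1,0,0)$; translated into column indices this says the pivot of row $p$ sits strictly to the left of the pivot of row $p+1$. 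A short three-case inspection of the equation $\mu_p+(1,0,0)=\mu_{p'}+e$ for $e\in\{(1,0,0),(0,1,0),(0,0,1)\}$ shows additionally that every entry lying strictly below a pivot is zero. Hence $C$ is in row echelon form with a pivot in each of its $s$ rows, and therefore $\text{rank}\,C=s=p_{3,d-1}(j-1)$.

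The step that needs the most care is the nonvanishing of $C_{\mu,\,\mu+(1,0,0)}$: this is the only place where one must track exactly how $E$ permutes and rescales the monomials inside an $S_3$-orbit, including the degenerate cases in which two of $a,b,c$ coincide — these change the coefficient only by a positive integer factor (a ratio of orbit sizes) and never make it vanish. The remaining ingredients are elementary facts about the lexicographic order and about the three-element support of each row of $C$.
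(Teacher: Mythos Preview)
Your argument is correct and more direct than the paper's. The paper proceeds by writing out the first few rows of $C$, then proving an auxiliary zero-propagation claim (if $C_{i,\ell}\neq 0$ and $C_{k,\ell}=0$ for all $k>i$, then $C_{k+1,\ell+1}=0$) via a case analysis on the possible lexicographic successor $\mu_{k+1}$ of $\mu_k$; from this together with the observation that no row is identically zero it infers the echelon shape. You instead pin down the pivot of the $\mu$-row explicitly as the column indexed by $\mu+(1,0,0)$: the hypothesis $j<d$ forces the first part $a$ to satisfy $a+1\le d-1$, so $(a+1,b,c)$ is always an admissible partition; the entry there is a positive multiple of $(a+1)(d-a-1)$; and it is leftmost among the at most three nonzero entries of the row because $(a+1,b,c)$ dominates $(a,b+1,c)$ and $(a,b,c+1)$ lexicographically. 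Order-preservation of the shift $\mu\mapsto\mu+(1,0,0)$ then yields the strictly increasing pivot columns at once. Your extra three-case check that entries strictly below each pivot vanish is not needed for the bare echelon assertion, but it does recover the block lower-triangular decomposition $C=\bigl(C(a)\bigr)_a$ displayed in Example~\ref{example3.2}. What your route buys is that the pivot location, and hence full row rank, are visible immediately; the paper's route is phrased so that the same Claim can be reused verbatim in Lemma~\ref{L:20190930-305} for the range $j\ge d$, where $\mu+(1,0,0)$ may violate the box constraint and your pivot identification would have to be adapted.
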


\begin{proof} Let
$$
\begin{array}{rllllllllll}
{\mathscr P}_{3, d-1}(j)
& = & \{\lambda_1=(j,0,0), \lambda_2=(j-1,1,0), \lambda_3=(j-2,2,0), \lambda_4=(j-2,1,1),\dots\}, \\
{\mathscr P}_{3, d-1}(j-1)
& = & \{\mu_1=(j-1,0,0), \mu_2=(j-2,1,0), \mu_3=(j-3,2,0), \mu_4=(j-3,1,1),\dots\}.
\end{array} 
$$
Note that
$$
\begin{array}{llllllllll}
E([x^{\lambda_1}] )
& = & j(d-j) [x^{\mu_1}] & = & C_{11} [x^{\mu_1}], \\
E([x^{\lambda_2}] )
& = & (d-1) [x^{\mu_1}]+(j-1)(d-j+1) [x^{\mu_2}] & = & C_{12} [x^{\mu_1}]+C_{22} [x^{\mu_2}], \\

E([x^{\lambda_3}] )
& = & 2(d-2) [x^{\mu_2}]+(j-2)(d-j+2) [x^{\mu_3}] & = & C_{23} [x^{\mu_2}]+C_{33} [x^{\mu_3}], \\

E([x^{\lambda_4}] )
& = & (d-1) [x^{\mu_2}]+(j-2)(d-j+2) [x^{\mu_4}] & = & C_{24} [x^{\mu_2}]+C_{44} [x^{\mu_4}], \\
&& \hskip 3 true cm \vdots 
\end{array} 
$$
So $C$ is of the form
$$
C=
\begin{pmatrix}
C_{11} & C_{12}   & 0 & 0 & * & \cdots \\
     0    & C_{22} & C_{23} & C_{24} & * & \cdots \\
     0     & 0         &   C_{33} & 0 & * & \cdots \\
     0     & 0         &   0 & C_{44} & * & \cdots \\
     0     & 0         &   0 & 0 & * & \cdots \\
\vdots     & \vdots &   \vdots & \vdots & \vdots & \vdots \\     
     
\end{pmatrix} 
$$

\noindent
{\bfseries\em Claim.} If $C_{i,\ell}\ne 0$ and $C_{k,\ell}=0$ for $k>i$, then $C_{k+1,\ell+1}=0$ for such $k$.

\medskip

\noindent
{\em Proof of Claim.} 
Let $\mu_k=(a,b,c)$ such that $a+b+c=j-1$ and $j-1\ge a\ge b \ge c$. Since $C_{k,j}=0$, we see that no $\lambda_j$ is
$$
(a+1,b,c), (a,b+1,c), (a,b,c+1). 
$$

Note that
$$
\mu_{k+1}=(a,b-1,c+1), (a-1,j-a,0).
$$
If $C_{k+1,j+1}\ne 0$, then $\lambda_{j+1}$ is one of the following, 
$$
(a+1,b-1,c+1), (a,b,c+1), (a,b-1,c+2),
$$
which is impossible. Hence $C_{k+1,j+1}= 0$ for such $k$.

\medskip

It is from Claim that the matrix $C$ is a row echelon form. Moreover, since $E([x^{\lambda_i}])$ generates all partitions in ${\mathscr P}_{3, d-1}(j-1)$, we see that every row of $C$ has non zero component, i.e., the rank of $C$ is the same as the number of rows of $C$, which $p_{3,d-1}(j-1)$, as we wished. 
\end{proof}

\begin{lem}\label{L:20190930-305} With notations as above, for $d\le j$, the rank of $C$ is the number of rows of $C$, which is $p_{3,d-1}(j-1)$. 
\end{lem}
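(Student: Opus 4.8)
The statement to prove is Lemma~\ref{L:20190930-305}: for $d \le j \le \lfloor \frac{3(d-1)}{2}\rfloor$, the matrix $C$ encoding $E$ on the symmetric polynomials has rank equal to its number of rows $p_{3,d-1}(j-1)$. Equivalently, $E$ restricted to the symmetric part of $A(d)_j$ is surjective onto the symmetric part of $A(d)_{j-1}$. My plan is to mirror the strategy of Lemma~\ref{L:20190930-304}, but to account for the extra ``ceiling'' constraint $\lambda_1,\mu_1 \le d-1$ that now genuinely bites, since $j \ge d$ means the partitions of $j$ into $\le 3$ parts each $< d$ form a proper subset of all partitions of $j$ into $\le 3$ parts.

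First I would set up the same ordering: list $\mathscr{P}_{3,d-1}(j) = \{\lambda_1,\dots,\lambda_t\}$ and $\mathscr{P}_{3,d-1}(j-1) = \{\mu_1,\dots,\mu_s\}$ in lexicographic order, and recall from Lemma~\ref{L:20190917-302} that every $\mu \in \mathscr{P}_{3,d-1}(j-1)$ has at least one ``lift'' $\mu+(1,0,0)$, $\mu+(0,1,0)$, or $\mu+(0,0,1)$ lying in $\mathscr{P}_{3,d-1}(j)$ (this is exactly what forces each row of $C$ to be nonzero, hence $\mathrm{rank}\, C = s$ once we know $C$ can be put in echelon form). The new ingredient is that, because $j \ge d$, the entry $a=j$ no longer appears as a first component: the rows of $C$ now split into blocks $C(a)$ indexed by $a = j-1, j-2, \dots, \lceil (j-1)/3 \rceil$, but only those $a \le d-1$ actually occur; symmetrically for the columns. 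I would argue exactly as in Example~\ref{example3.2}(c),(d): each block $C(a)$ is itself in row echelon form (the $E$-image of $[x^\lambda]$ with first part $a$ or $a+1$ hits $[x^\mu]$ with first part $a$, and within fixed first part the lexicographic order on the remaining two coordinates propagates the echelon shape), and the blocks are arranged block-lower-triangularly, so $\mathrm{rank}\, C = \sum_a \mathrm{rank}\, C(a) = \sum_a (\#\text{rows of } C(a)) = s$.

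Concretely, I would prove the analogue of the Claim in Lemma~\ref{L:20190930-304}: if $C_{i\ell}\neq 0$ and $C_{k\ell}=0$ for all $k>i$, then $C_{k+1,\ell+1}=0$, by the same case analysis on $\mu_k=(a,b,c)$ and the three possible preimages of $\mu_{k+1}$ under $E$ --- checking that the boundary constraint $a \le d-1$ is preserved and never creates a spurious nonzero entry. Combined with the fact (Lemma~\ref{L:20190917-302}) that no row is identically zero, this gives that $C$ is row-equivalent to a matrix with a pivot in every row, i.e.\ $\mathrm{rank}\,C$ equals the number of rows, $p_{3,d-1}(j-1)$.

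The main obstacle I anticipate is bookkeeping at the two ``walls'' $a=d-1$ (first-coordinate saturation) and $c = 0$ versus $b=c$ (the bottom of a block): when $j \ge d$, a partition of $j$ with first part $d-1$ has its remaining mass $j-(d-1) \le \lfloor (d-1)/2 \rfloor$ forced into the last two coordinates, and one must check that the lexicographically-first lift of each $\mu$ with first part $d-1$ is still available --- this is where Lemma~\ref{L:20190917-302}'s hypothesis $j < 3(d-1)$, hence $c < d-1$, is used, but one should double-check it also handles $a = d-1$ cleanly (it does: if $a=d-1$ one lifts in the $b$ or $c$ slot, both of which are $<d-1$). Once the echelon/block-triangular structure is verified at these walls, the rank count is immediate. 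I would present the argument by first stating the block decomposition $C = \bigl(C(a)\bigr)_{\lceil(j-1)/3\rceil \le a \le \min(j-1,\,d-1)}$, then proving each $C(a)$ is in row echelon form with full row rank via the Claim, and concluding with the block-lower-triangularity displayed exactly as in Example~\ref{example3.2}(c),(d).
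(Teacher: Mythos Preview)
Your proposal is correct and follows essentially the same route as the paper: list the partitions lexicographically, prove a Claim that forces a (block) row-echelon shape, and invoke Lemma~\ref{L:20190917-302} to guarantee no zero rows. The paper's own proof of Lemma~\ref{L:20190930-305} is nearly verbatim that of Lemma~\ref{L:20190930-304}, stating the identical Claim and case analysis; your version is in fact slightly more careful about the ceiling wall $a=d-1$ (which the paper's displayed listing $\lambda_1=(j-1,1,0)$, $\mu_1=(j-1,0,0)$ glosses over when $j>d$), but this only strengthens the argument without changing its substance.
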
 
\begin{proof} 
Let
$$
\begin{array}{rllllllllll}
{\mathscr P}_{3, d-1}(j)
& = & \{\lambda_1=(j-1,1,0), \lambda_2=(j-2,2,0), \lambda_3=(j-2,1,1), \lambda_4=(j-3,3,0),\dots\}, \\
{\mathscr P}_{3, d-1}(j-1)
& = & \{\mu_1=(j-1,0,0), \mu_2=(j-2,1,0), \mu_3=(j-3,2,0), \mu_4=(j-3,1,1),\dots\}.
\end{array} 
$$
Note that
$$
\begin{array}{llllllllll}
E([x^{\lambda_1}] )
& = & 2(d-1) [x^{\mu_1}] +(j-1)(d-j+1) [x^{\mu_2}] & = & C_{11} [x^{\mu_1}]+C_{21} [x^{\mu_2}], \\
E([x^{\lambda_2}] )
& = & 2(d-2) [x^{\mu_2}]+(j-2)(d-j+2) [x^{\mu_3}] & = & C_{22} [x^{\mu_2}]+C_{32} [x^{\mu_3}], \\

E([x^{\lambda_3}] )
& = & (d-1) [x^{\mu_2}]+(j-2)(d-j+2) [x^{\mu_4}] & = & C_{33} [x^{\mu_2}]+C_{43} [x^{\mu_4}], \\

E([x^{\lambda_4}] )
& = & 3(d-3) [x^{\mu_3}]+(j-3)(d-j+3) [x^{\mu_5}] & = & C_{34} [x^{\mu_3}]+C_{54} [x^{\mu_5}], \\
&& \hskip 3 true cm \vdots 
\end{array} 
$$
So $C$ is of the form
$$
C=
\begin{pmatrix}
C_{11} & 0   & 0 & 0 & * & \cdots \\
C_{21}  & C_{22} & 0 & 0 & * & \cdots \\
     0     & C_{32}  &   C_{33} & C_{34} & * & \cdots \\
     0     & 0         &   C_{43} & 0 & * & \cdots \\
     0     & 0         &   0 & C_{54} & * & \cdots \\
\vdots     & \vdots &   \vdots & \vdots & \vdots & \vdots \\     
\end{pmatrix} 
$$

\noindent
{\bfseries\em Claim.} If $C_{i,\ell}\ne 0$ and $C_{k,\ell}=0$ for $k>i\ge 2$, then $C_{k+1,\ell+1}=0$ for such $k$.

\medskip

\noindent
{\em Proof of Claim.} 
Let $\mu_k=(a,b,c)$ such that $a+b+c=j-1$ and $j-1\ge a\ge b \ge c$. Since $C_{k,j}=0$, we see that no $\lambda_j$ is
$$
(a+1,b,c), (a,b+1,c), (a,b,c+1). 
$$

Note that
$$
\mu_{k+1}=(a,b-1,c+1), (a-1,j-a,0).
$$
If $C_{k+1,j+1}\ne 0$, then $\lambda_{j+1}$ is one of the following, 
$$
(a+1,b-1,c+1), (a,b,c+1), (a,b-1,c+2),
$$
which is impossible. Hence $C_{k+1,j+1}= 0$ for such $k$.

\medskip

It is from Claim that the matrix $C$ is a row echelon form. Moreover, since $E([x^{\lambda_i}])$ generates all partitions in ${\mathscr P}_{3, d-1}(j-1)$, we see that every row of $C$ has non zero component, i.e., the rank of $C$ is the same as the number of rows of $C$, which $p_{3,d-1}(j-1)$, as we wished. 
\end{proof}

\begin{theorem} \label{thm:trivial}
{\rm
Given the exponent $d$ and the degree $j$, let 
$$P=\sum_{\lambda = (a, b, c)} \alpha_{a, b, c} [x_{1}^{a}  x_{2}^{b} x_{3}^{c}]$$
be a homogeneous symmetric polynomial of degree $j$, 
where $\lambda = (a,b,c)$ runs over all partitions of $j$ of size $3 \times (d-1)$ and  $[x_{1}^{a}  x_{2}^{b} x_{3}^{c}]$ denotes the $S_{3}$-orbit of $x_{1}^{a} x_{2}^{b} x_{3}^{c}$.
\begin{enumerate}

\item[{\rm (a)}] The solutions of the system of linear equations $E(P)=0$ give explicit basis polynomials for each trivial 
representation in $\text{Ker}(E) \cap A(d)_{j}$.


\item[{\rm (b)}] Let $\text{triv}(d, j)$ denote the multiplicity of trivial representation in the 
irreducible decomposition of $\text{Ker}(E) \cap A(d)_{j}$. Then  
\end{enumerate}
\begin{equation} \label{eq:triv}
\text{triv}(d, j) = p_{3, d-1}(j) - p_{3, d-1}(j-1).
\end{equation}
}
\end{theorem}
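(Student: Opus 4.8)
The plan is to reduce both parts of the theorem to the rank computations already carried out in Lemmas \ref{L:20190930-304} and \ref{L:20190930-305}. First I would invoke the standard fact that for a finite-dimensional $S_{3}$-module $M$ the multiplicity of the trivial representation in $M$ equals $\dim M^{S_{3}}$, the dimension of the subspace of $S_{3}$-invariants (since the trivial module is one-dimensional, its isotypic component is exactly the fixed subspace). Apply this with $M=\text{Ker}(E)\cap A(d)_{j}$. Because $E$ is an $S_{3}$-module homomorphism, it carries invariants to invariants, so $M^{S_{3}}$ is precisely the space of homogeneous symmetric polynomials $P$ of degree $j$ with $E(P)=0$. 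Thus $\text{triv}(d,j)=\dim\{\,P \text{ symmetric},\ \deg P=j,\ E(P)=0\,\}$.

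Next I would set up the linear algebra exactly as in the discussion preceding the theorem. The orbit sums $[x^{\lambda}]$ with $\lambda\in{\mathscr P}_{3,d-1}(j)$ form a basis of the space of homogeneous symmetric polynomials of degree $j$ in $A(d)$, so writing $P=\sum_{q=1}^{t}\alpha_{q}[x^{\lambda_{q}}]$ gives $t=p_{3,d-1}(j)$ unknowns. Likewise $E(P)$ lies in the span of the orbit sums $[x^{\mu}]$ with $\mu\in{\mathscr P}_{3,d-1}(j-1)$, and Lemma \ref{L:20190917-302} guarantees that every such $[x^{\mu}]$ genuinely occurs, so $E(P)=0$ is precisely the homogeneous system \eqref{EQ:20190918-303} with coefficient matrix $C=(C_{pq})$ of size $s\times t$, where $s=p_{3,d-1}(j-1)$. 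The solution space of this system, transported back via $P=\sum\alpha_{q}[x^{\lambda_{q}}]$, is exactly the $S_{3}$-invariant part of $\text{Ker}(E)\cap A(d)_{j}$; choosing any basis of the solution space therefore produces an explicit homogeneous basis for the trivial representations inside $\text{Ker}(E)\cap A(d)_{j}$, which is part (a).

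For part (b) I would then apply the rank computation directly. When $j=0$ the assertion is the trivial $1=1$. For $0<j<d$, Lemma \ref{L:20190930-304} states that $C$ is in row echelon form with $\operatorname{rank}C=s=p_{3,d-1}(j-1)$; for $d\le j\le\lfloor 3(d-1)/2\rfloor$, Lemma \ref{L:20190930-305} gives the same conclusion. In either case rank–nullity yields $\dim(\text{solution space})=t-s=p_{3,d-1}(j)-p_{3,d-1}(j-1)$, which is \eqref{eq:triv}, consistent with \eqref{eq:mult}.

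I do not expect a serious obstacle here: all the substantive work has already been done in Lemmas \ref{L:20190917-302}, \ref{L:20190930-304}, and \ref{L:20190930-305}. The only points deserving care in the write-up are (i) the identification of the trivial-isotypic component of an $S_{3}$-module with its fixed subspace together with the fact that $E$ restricts to invariants, and (ii) confirming that Lemmas \ref{L:20190930-304} and \ref{L:20190930-305} together cover the entire relevant range $1\le j\le\lfloor 3(d-1)/2\rfloor$, with the degenerate case $j=0$ treated separately.
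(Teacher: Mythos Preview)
Your proposal is correct and follows essentially the same route as the paper: set up the linear system \eqref{EQ:20190918-303}, identify its solution space with the $S_{3}$-invariants in $\text{Ker}(E)\cap A(d)_{j}$, and then invoke Lemmas~\ref{L:20190930-304} and~\ref{L:20190930-305} to obtain $\operatorname{rank}C=p_{3,d-1}(j-1)$ and conclude via rank--nullity. The paper's own proof additionally re-derives the rank by exhibiting the block lower-triangular structure via the submatrices $C(a)$, but since that is precisely the content of the two lemmas you cite, your streamlined version is equivalent; your explicit remark that the trivial isotypic component equals the fixed subspace $M^{S_{3}}$ is a welcome clarification the paper leaves implicit.
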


\begin{proof} \
The statement (a) is proved in the discussion above the theorem. 

\vskip 2mm 

To prove (b), let $\lambda_{1}, \ldots, \lambda_{t}$ be the partitions in ${\mathscr P}_{3, d-1}(j)$ and let $\mu_{1}, \ldots, \mu_{s}$ be the partitions in ${\mathscr P}_{3, d-1}(j-1)$ listed by the lexicographic ordering. Thus $t=p_{3, d-1}(j)$ and $s=p_{3, d-1}(j-1)$. 

\vskip 2mm

For $\lambda =(a, b, c)$, we denote $x^{\lambda} = x_{1}^{a} x_{2}^{b} x_{3}^{c}$ and write 
$P=\alpha_{1} [x^{\lambda_{1}}] + \cdots + \alpha_{t} [x^{\lambda_{t}}]$ be a homogeneous symmetric polynomial of degree $j$. Then the condition $E(P)=0$ yields a system of linear equations in the variable $\alpha_{1}, \ldots, \alpha_{t}$ with coefficient matrix $C=(C_{pq})$ for $(1\le p \le s,1\le q \le t)$, where $C_{pq}:=C_{\mu_p,\lambda_q}$ such that 
$$
E\left(\alpha_p \left[x^{\lambda_q}\right]\right)=\alpha_p C_{\mu_p,\lambda_q}\left[x^{\mu_p}\right].
$$ 
That is, 
\begin{equation*}
\begin{aligned}
& C_{11} \alpha_{1} + \cdots + C_{1t} \alpha_{t}=0 \\
& C_{21} \alpha_{1} + \cdots + C_{2t} \alpha_{t} =0 \\
& \hskip 15mm  \vdots \\
& C_{s1} \alpha_{1} + \cdots + C_{st} \alpha_{t} =0
\end{aligned}
\end{equation*}

Observe that the coefficients $C_{pq}=0$ unless $\lambda_q=\mu_{p} + (1,0,0)$, $\lambda_q=\mu_{p} + (0,1,0)$, or $\lambda_q=\mu_{p} + (0,0,1)$ and they are partitions. In other words, $C_{pq}=0$ unless the term $[x^{\mu_{p}}]$ arises as an image of $[x^{\mu_{p}+(1,0,0)}]$, $[x^{\mu_{p}+(0, 1,0)}]$, or $[x^{\mu_{p}+(0,0,1)}]$ under $E$. Also note that at least one of them is a partition. (See Lemma \ref{L:20190930-304}, \ref{L:20190930-305}.)

\vskip 2mm 

Let $C(a)$ be the submatrix of $C$ such that i) the non-zero entries are determined by the partitions $\mu \in {\mathscr P}_{3, d-1}(j-1)$, $\lambda \in {\mathscr P}_{3, d-1}(j)$, where the first component of $\mu$ is $a$ and the first component of $\lambda$ is $a$ or $a+1$, ii) the other entries are $0$. (See Example \ref{example3.2} above.)
%
Then each $C(a)$ is in row echelon form. Since $C_{(a,b,c),(a',b',c')}=0$ unless $a'=a$ or $a'=a+1$, the matrix $C$ is in block low triangular form and hence we have
$$\text{rank}\, C = \sum_{a} \text{rank}\, C(a) = \sum_{a} (\text{the number of rows in $C(a)$}) 
= \text{the number of rows in $C$},$$
which proves our claim. 
\end{proof}

\vskip 3mm 

{\bf (2) Sign representation: }

\vskip 2mm 

Given $d$ and $j$, since every permutaion acts as the multiplication by its sign, the candidate polynomial in $\text{Ker}(E) \cap A(d)_{j}$ can be written as 
$$P=\sum_{\mu = (a, b, c)} \beta_{a, b, c} \langle x_{1}^{a}  x_{2}^{b} x_{3}^{c} \rangle,$$
where $\mu = (a,b,c)$ is a {\it strict} partition of $j$ of size $3 \times (d-1)$ and  $\langle x_{1}^{a}  x_{2}^{b} x_{3}^{c} \rangle$ denotes the $S_{3}$-orbit of the monomial $x_{1}^{a} x_{2}^{b} x_{3}^{c}$ (under the sign action). For instance, when $d \ge 4$, we have 
\begin{equation*}
\begin{aligned}
&  \langle x_{1}^3 \rangle=0, \\
& \langle x_{1}^{2} x_{2} \rangle = x_{1}^{2} x_{2} - x_{1}^{2} x_{3} + x_{2}^{2} x_{3} - x_{1} x_{2}^{2} + x_{1} x_{3}^{2} - x_{2} x_{3}^{2}, \\
& \langle x_{1} x_{2} x_{3} \rangle = 0.
\end{aligned}
\end{equation*}

As is the case of trivial representation, the solutions of the system $E(P)=0$ would give  the explicit basis polynomials of each sign representation in $\text{Ker}(E) \cap A(d)_{j}$ and their multipicity is given by  $p_{3, d-1}^{+}(j) - p_{3, d-1}^{+}(j-1)$, where $p_{k,l}^{+}(n)$ denotes the number of strict partitions of size $k \times l$.  Hence we have: 

\vskip 2mm 

\begin{theorem} \label{thm:sign}
{\rm
Given the exponent $d$ and the degree $j$, let 
$$P=\sum_{\mu = (a, b, c)} \beta_{a, b, c} \langle x_{1}^{a}  x_{2}^{b} x_{3}^{c} \rangle$$
be a homogeneous skew-symmetric polynomial of degree $j$,
where $\mu = (a,b,c)$ runs over all rectangular strict partitions of $j$ of size $3 \times (d-1)$ and $\langle x_{1}^{a}  x_{2}^{b} x_{3}^{c} \rangle$ denotes the $S_{3}$-orbit of $x_{1}^{a} x_{2}^{b} x_{3}^{c}$ under the sign action.

\vskip 2mm 

\begin{enumerate}

\item[{\rm (a)}] The solutions of the system of linear equations $E(P)=0$ give  explicit basis polynomials for
each sign representation in $\text{Ker}(E) \cap A(d)_{j}$.

\vskip 2mm 

\item[{\rm (b)}] Let $\text{sign}(d, j)$ denote the multiplicity of sign representation in the irreducible 
decomposition of $\text{Ker}(E) \cap A(d)_{j}$. Then  
\end{enumerate}
\begin{equation} \label{eq:sign}
\text{sign}(d,j)=p_{3, d-1}^{+}(j) - p_{3, d-1}^{+}(j-1).
\end{equation}
}
\end{theorem}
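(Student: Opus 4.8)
The plan is to transport the argument of Theorem~\ref{thm:trivial} to the skew-symmetric setting, systematically replacing ``partition'' by ``strict partition'' and the symmetrized orbit $[x^\lambda]$ by the antisymmetrized orbit $\langle x^\lambda\rangle$. Part (a) needs no new idea. Since $E$ is an $S_3$-module homomorphism it maps the sign-isotypic component of $A(d)_j$ --- which is precisely the space of homogeneous skew-symmetric polynomials of degree $j$ --- into that of degree $j-1$; and $\{\langle x^\mu\rangle : \mu\in\mathscr P^+_{3,d-1}(j)\}$ is a $\k$-basis of the former, since $\langle x_1^a x_2^b x_3^c\rangle$ vanishes unless $a,b,c$ are pairwise distinct and the surviving orbit sums have pairwise disjoint monomial supports. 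Hence the sign part of $\mathrm{Ker}(E)\cap A(d)_j$ is exactly the solution space of $E(P)=0$ with $P$ ranging over such skew-symmetric polynomials, so solving this linear system yields the desired explicit basis polynomials, and (the sign representation being $1$-dimensional) its nullity equals $\text{sign}(d,j)$.

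For part (b) I would first record the shape of $E$ on the skew orbits. Applying \eqref{eq:E} lowers one exponent by one, so $S_3$-equivariance gives, for $\mu=(a,b,c)$ with $d-1\ge a>b>c\ge 0$,
\[
E\!\left(\langle x^{(a,b,c)}\rangle\right)=a(d-a)\,\langle x^{(a-1,b,c)}\rangle+b(d-b)\,\langle x^{(a,b-1,c)}\rangle+c(d-c)\,\langle x^{(a,b,c-1)}\rangle,
\]
with the convention that a term is $0$ whenever the indicated triple is not a strict partition; no reordering signs intervene because $a>b>c$ forces each lowered triple to be weakly decreasing. Thus $E(\langle x^\mu\rangle)$ is supported on the strict partitions of $j-1$ obtained from $\mu$ by lowering one part, and every surviving coefficient is nonzero since $2\le a\le d-1$, $1\le b\le d-2$, and (when the third term survives) $1\le c\le d-3$.

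Next I would list $\mathscr P^+_{3,d-1}(j)=\{\lambda_1,\dots,\lambda_t\}$ and $\mathscr P^+_{3,d-1}(j-1)=\{\mu_1,\dots,\mu_s\}$ in lexicographic order and form the coefficient matrix $C^+=(C^+_{pq})$ of the system $E(P)=0$ exactly as in the proof of Theorem~\ref{thm:trivial}. By the displayed formula, $C^+_{(a,b,c),(a',b',c')}=0$ unless $a'\in\{a,a+1\}$, so $C^+$ is block lower triangular with blocks $C^+(a)$ indexed by the first component $a$; and the combinatorial argument behind the Claim in Lemma~\ref{L:20190930-304} and Lemma~\ref{L:20190930-305}, adapted so that the excluded ``lifts'' of a $\mu_k$ are only those that are strict partitions, is a routine modification showing each block $C^+(a)$ is in row echelon form. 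It then remains to establish the strict analogue of Lemma~\ref{L:20190917-302}: for every $\nu=(a,b,c)\in\mathscr P^+_{3,d-1}(j-1)$ at least one of $\nu+(1,0,0),\ \nu+(0,1,0),\ \nu+(0,0,1)$ lies in $\mathscr P^+_{3,d-1}(j)$ and, under $E$, produces $\langle x^\nu\rangle$ with nonzero coefficient. Granting this, every row of $C^+$ acquires a pivot, so $\mathrm{rank}\,C^+=s$, and therefore
\[
\text{sign}(d,j)=t-s=p^+_{3,d-1}(j)-p^+_{3,d-1}(j-1),
\]
which is \eqref{eq:sign}.

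The main obstacle is precisely this strict version of Lemma~\ref{L:20190917-302}, which does not transcribe verbatim because increasing a part of a strict partition may destroy strictness or push the first part past $d-1$. I expect to argue by cases on the gaps between consecutive parts of $\nu=(a,b,c)$: if $b-c\ge 2$, take $\nu+(0,0,1)$, whose $E$-image has $\langle x^\nu\rangle$-coefficient $(c{+}1)(d{-}c{-}1)\neq 0$; if $b-c=1$ but $a-b\ge 2$, take $\nu+(0,1,0)$, with coefficient $(b{+}1)(d{-}b{-}1)\neq 0$; and if $a-b=b-c=1$, so $\nu=(c{+}2,c{+}1,c)$, take $\nu+(1,0,0)=(c{+}3,c{+}1,c)$, with coefficient $(a{+}1)(d{-}a{-}1)\neq 0$, where one must check $c+3\le d-1$. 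This last inequality is the one place where the degree restriction $j\le\lfloor 3(d-1)/2\rfloor$ from \eqref{eq:mult} enters: it keeps the ``staircase'' strict partitions $(c{+}2,c{+}1,c)$ with large $c$ (in particular $(d{-}1,d{-}2,d{-}3)$, which has degree $3d-6$) out of the relevant degrees, and the few small exponents $d=3,4$ are checked by hand.
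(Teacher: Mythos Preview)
Your proposal is correct and follows essentially the same approach as the paper, which for part~(b) simply says ``can be proved using a similar argument in Theorem~\ref{thm:trivial}(b).'' You have in fact supplied more detail than the paper does: the paper never writes out the strict analogue of Lemma~\ref{L:20190917-302}, whereas you correctly isolate the staircase case $\nu=(c{+}2,c{+}1,c)$ and note that $c+3\le d-1$ is forced by the standing hypothesis $j\le\lfloor 3(d-1)/2\rfloor$ (indeed $c=d-3$ would give $j=3d-5>\lfloor 3(d-1)/2\rfloor$ for every $d\ge 3$, so no separate hand-check of small $d$ is actually needed).
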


\begin{proof}
The statement (a) is proved in the discussion above the theorem.
The statement (b) can be proved using a similar argument in Theorem 3.1(b). 
\end{proof}

\vskip 2mm

\begin{example} \hfill

\vskip 2mm 

(a) Let $d=5$, $j=4$. Then $\mu = (3, 1, 0)$ is the only strict partition of $4$ of size $3 \times 4$ and hence the only candidate polynomial is 
$$P = x_{1}^{3} x_{2} - x_{1}^{3} x_{3} + x_{2}^{3} x_{3} - x_{1} x_{2}^{3} + x_{1} x_{3}^{3} -x_{2} x_{3}^{3}.$$
But one can check $E(P) \neq 0$, which implies there is no sign representation in $\text{Ker}(E) \cap A(5)_{4}$.

\vskip 2mm 

(b) Let $d=9$, $j=9$.
Then we have 
$$ 
{\mathscr P}^+_{3, d-1}(9)=\{\lambda_1=(8,1,0), \lambda_2=(7,2,0), \lambda_3=(6,3,0), \lambda_4=(6,2,1), \lambda_5=(5,4,0),\lambda_6=(5,3,1),\lambda_7=(4,3,2)\}
$$
and 
$$
{\mathscr P}^+_{3, d-1}(8)=\{\mu_1=(7,1,0), \mu_2=(6,2,0), \mu_3=(5,3,0), \mu_4=(5,2,1), \mu_5=(4,3,1)\}.
$$
Thus the basis polynomial $P$ can be written as
$$P=a_{1} \langle x_{1}^{8} x_{2} \rangle + a_{2} \langle x_{1}^{7} x_{2}^{2} \rangle + a_{3} \langle x_{1}^{6} x_{2}^{3} \rangle + a_{4} \langle x_{1}^{6} x_{2}^{2} x_{3} \rangle + a_{5} \langle x_{1}^{5} x_{2}^{4} \rangle + a_{6} \langle x_{1}^{5} x_{2}^{3} x_{3} \rangle + a_{7} \langle x_{1}^{4} x_{2}^{3} x_{3}^{2} \rangle.$$ 
The condition $E(P)=0$ yields 
$$4 a_{1} + 7 a_{2}=0, \ \ 7a_{2} + 9 a_{3} + 4 a_{4} = 0, \ \ 9 a_{3} + 10 a_{5} + 4 a_{6} =0, \ \ a_{4} + a_{6}=0, \ \ 10 a_{6} + 7 a_{7} =0,$$
and we obtain 
$$a_{1} = - \frac{5}{2} a_{5} - 2a_{6}, \ \ a_{2} = \frac{10}{7} a_{5}  + \frac{8}{7} a_{6}, \ \ a_{3} = - \frac{10}{9} a_{5} - \frac{4}{9} a_{6}, \ \ a_{4} = - a_{6}, \ \ a_{7} = - \frac{10}{7} a_{6}.$$
Hence there are two copies of sign representation in $\text{Ker}(E) \cap A(9)_{9}$ and we can easily find their explicit bases. Note that $\text{sign}(9,9)=p_{3, 8}^{+}(9) - p_{3, 8}^{+}(8) = 7 - 5 =2$ as expected. 

Also we have $C=(C_{pq})$ as follows:
\begin{align*}
&C=
\begin{pmatrix}
C_{11}&C_{12}&C_{13}&C_{14}&C_{15}&C_{16}&C_{17}\\
C_{21}&C_{22}&C_{23}&C_{24}&C_{25}&C_{26}&C_{27}\\
C_{31}&C_{32}&C_{33}&C_{34}&C_{35}&C_{36}&C_{37}\\
C_{41}&C_{42}&C_{43}&C_{44}&C_{45}&C_{46}&C_{47}\\
C_{51}&C_{52}&C_{53}&C_{54}&C_{55}&C_{56}&C_{57}
\end{pmatrix}\\
&\phantom{C}=
{\tiny
\begin{pmatrix}
C_{(7,1,0),(8,1,0)}&C_{(7,1,0),(7,2,0)}&0&0&0&0&0\\
0&C_{(6,2,0),(7,2,0)}&C_{(6,2,0),(6,3,0)}&C_{(6,2,0),(6,2,1)}&0&0&0\\
0&0&C_{(5,3,0),(6,3,0)}&0&C_{(5,3,0),(5,4,0)}&C_{(5,3,0),(5,3,1)}&0\\
0&0&0&C_{(5,2,1),(6,2,1)}&0&C_{(5,2,1),(5,3,1)}&0\\
0&0&0&0&0&C_{(4,3,1),(5,3,1)}&C_{(4,3,1),(4,3,2)}
\end{pmatrix}}
\end{align*}
Then the submatrices $C(a)$ are given as
\begin{align*}
&C(7)=\begin{pmatrix}C_{(7,1,0),(8,1,0)}&C_{(7,1,0),(7,2,0)}&0&0&0&0&0\end{pmatrix},\\
&C(6)=\begin{pmatrix}0&C_{(6,2,0),(7,2,0)}&C_{(6,2,0),(6,3,0)}&C_{(6,2,0),(6,2,1)}&0&0&0\end{pmatrix},\\
&C(5)=\begin{pmatrix}0&0&C_{(5,3,0),(6,3,0)}&0&C_{(5,3,0),(5,4,0)}&C_{(5,3,0),(5,3,1)}&0\\
0&0&0&C_{(5,2,1),(6,2,1)}&0&C_{(5,2,1),(5,3,1)}&0\end{pmatrix},\\
&C(4)= \begin{pmatrix}0&0&0&0&0&C_{(4,3,1),(5,3,1)}&C_{(4,3,1),(4,3,2)}\end{pmatrix}
\end{align*}
Note that each $C(a)$ is in row echelon form and its rank is equal to 
the number of rows in $C(a)$ for $a=6,5,4,3$. Hence the matrix $C$ is in block lower triangular form and we have 
$$\text{rank}\, C = \sum_{a=4}^7 \text{rank}\, C(a) = \sum_{a=4}^7 (\text{the number of rows in $C(a)$}) 
= \text{the number of rows in $C$}.$$

\end{example}

\vskip 3mm

{\bf (3) Standard representation: } 

\vskip 2mm 

Recall that the standard representation is the Specht module corresponding to the partition $\lambda = (2, 1, 0)$ and that it can be realized as the 2-dimensional hyperplane $H=\{(x_{1}, x_{2}, x_{3}) \in  \k^{3} \mid x_{1} + x_{2} + x_{3} = 0 \}$. We can take $u_{1} =(1, -1, 0)$ and $u_{2} = (0, 1, -1)$ as the basis vectors for $H$ and if we set $u_{3}=(-1, 0, 1)$, we have $u_{1} + u_{2} + u_{3} = (0, 0, 0)$. Note that $(1,2)u_{1} = - u_{1}$,  $u_{2} = -(1,3) u_{1}$ and $u_{3} = -(1,2) u_{2}$, where $(i,j)$ denotes he transposition of $i$ and $j$. This observation is the beginning point of our construction. 

\vskip 2mm 

Given $d$ and $j$, let $P_{1}$ be a homogeneous polynomial of degree $j$ such that $(1, 2) P_{1} = -P_{1}$. Set $P_{2} = -(1,3) P_{1}$ and $P_{3} = -(1,2) P_{2}$. Since the symmetric group action commutes with $E$ and $F$, $E(P_{1})=0$ implies $E(P_{2}) = E(P_{3})=0$.  We naturally  take the system of linear equations arising form $E(P_{1})=0$ and $P_{1} + P_{2} + P_{3} =0$. Then the solutions of this system would give the explicit basis polynomials of each standard representation in $\text{Ker}(E) \cap A(d)_{j}$. Moreover, it is straightforward to see that the multiplicity  of the standard representation in $\text{Ker}(E)  \cap A(d)_{j}$ is determined by those of trivial representation and sign representation (and the Hilbert polynomial). 

\vskip 2mm

To summarize, we have:
 
 \vskip 2mm 
 
 \begin{theorem} \label{thm:standard}
 {\rm
 Given the exponent $d$ and the degree $j$, let $P_{1}$ be a homogeneous polynomial of degree $j$ such that $(1,2)P_{1} = -P_{1}$. Set $P_{2} = -(1,3) P_{1}$ and $P_{3}= - (1, 2) P_{2}$. 
 
 \vskip 2mm
 
 \begin{enumerate}
 
 \item[{\rm (a)}] The solutions of the system of linear equations $E(P_{1})=0$ and $P_{1} + P_{2} + P_{3} =0$ give 
 explicit basis polynomials $P_{1}$ and $P_{2}$ for each standard representation 
 in $\text{Ker}(E) \cap A(d)_{j}$. 
 
 \vskip 2mm 
 
 \item[{\rm (b)}] Let $\text{st}(d, j)$ denote the multiplicity of standard representation in 
 the irreducible decomposition of $\text{Ker}(E) \cap A(d)_{j}$. Then 
 \end{enumerate}
 \begin{equation} \label{eq:standard}
 \text{st}(d, j) = \frac{1}{2} \left(\text{mult}(d,j) - \text{triv}(d, j) - \text{sign}(d, j)  \right),
 \end{equation}
 where $\text{mult}(d,j)$, $\text{triv}(d,j)$ and $\text{sign}(d,j)$ are given in 
 \eqref{eq:mult}, \eqref{eq:triv} and \eqref{eq:sign}, respectively. 
 }
 \end{theorem}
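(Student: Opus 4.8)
The plan is to derive both parts of the theorem from the $S_{3}$-module structure of $\text{Ker}(E)\cap A(d)_{j}$ together with the multiplicity formulas already established, namely \eqref{eq:mult}, \eqref{eq:triv} and \eqref{eq:sign}. No new computation with the operator $E$ is required beyond those already carried out for the trivial and sign representations; the remaining work is purely representation-theoretic bookkeeping for $S_{3}$.

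For part (a), I would first translate the three imposed conditions into representation theory. In the realization of the standard module $S^{(2,1)}$ as the hyperplane $H$, the transposition $(1,2)$ has a one-dimensional $(-1)$-eigenspace spanned by $u_{1}=(1,-1,0)$, while $u_{2}=-(1,3)u_{1}$, $u_{3}=-(1,2)u_{2}$ and $u_{1}+u_{2}+u_{3}=0$. Hence, for a homogeneous $P_{1}\in A(d)_{j}$ with $(1,2)P_{1}=-P_{1}$, the assignments $P_{2}=-(1,3)P_{1}$ and $P_{3}=-(1,2)P_{2}$ are $\k$-linear in $P_{1}$, the span $\text{span}_{\k}\{P_{1},P_{2},P_{3}\}$ is an $S_{3}$-submodule of $A(d)_{j}$ on which the transpositions act exactly as on $u_{1},u_{2},u_{3}$, and (since $(1,2)$ acts by $+1$ on the trivial module, while $P_{1}$ has no trivial-isotypic component) this submodule involves only copies of $S^{(2,1)}$ and of the sign module $S^{(1,1,1)}$. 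On a sign component one has $P_{1}=P_{2}=P_{3}$, hence $P_{1}+P_{2}+P_{3}=3P_{1}\ne 0$, whereas on a standard component $P_{1}+P_{2}+P_{3}=0$ by the relation $u_{1}+u_{2}+u_{3}=0$. I would then argue: the conditions $E(P_{1})=0$ and $(1,2)P_{1}=-P_{1}$ cut out the $(-1)$-eigenspace of $(1,2)$ inside $\text{Ker}(E)\cap A(d)_{j}$, a space of dimension $\text{sign}(d,j)+\text{st}(d,j)$; writing $P_{1}=Q+R$ with $Q$ in its sign-isotypic part and $R$ in its standard-isotypic part, one gets $P_{1}+P_{2}+P_{3}=3Q$, so the remaining equation $P_{1}+P_{2}+P_{3}=0$ is equivalent to $Q=0$ and leaves a solution space of dimension $\text{st}(d,j)$. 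Choosing a basis $P_{1}^{(1)},\dots,P_{1}^{(r)}$ of this solution space, each $P_{1}^{(i)}$ generates an irreducible submodule isomorphic to $S^{(2,1)}$ with basis $\{P_{1}^{(i)},P_{2}^{(i)}\}$ (and $P_{3}^{(i)}=-P_{1}^{(i)}-P_{2}^{(i)}$), and these submodules are in direct sum and exhaust the standard-isotypic component of $\text{Ker}(E)\cap A(d)_{j}$; this establishes (a).

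For part (b), I would invoke complete reducibility of the finite-dimensional $S_{3}$-module $\text{Ker}(E)\cap A(d)_{j}$. Since its only possible irreducible constituents are $S^{(3)}$, $S^{(1,1,1)}$ and $S^{(2,1)}$, of dimensions $1$, $1$ and $2$, we have
$$\text{Ker}(E)\cap A(d)_{j}\ \cong\ \bigl(S^{(3)}\bigr)^{\oplus\text{triv}(d,j)}\oplus\bigl(S^{(1,1,1)}\bigr)^{\oplus\text{sign}(d,j)}\oplus\bigl(S^{(2,1)}\bigr)^{\oplus\text{st}(d,j)}.$$
Comparing dimensions with $\text{mult}(d,j)=\dim\bigl(\text{Ker}(E)\cap A(d)_{j}\bigr)$ gives $\text{mult}(d,j)=\text{triv}(d,j)+\text{sign}(d,j)+2\,\text{st}(d,j)$, and solving for $\text{st}(d,j)$ yields \eqref{eq:standard}; substituting \eqref{eq:mult}, \eqref{eq:triv} and \eqref{eq:sign} then gives the closed form.

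The main point that needs care --- and it is a mild one --- is the claim in (a) that imposing $P_{1}+P_{2}+P_{3}=0$ removes precisely the sign-representation contribution inside the $(-1)$-eigenspace of $(1,2)$ while leaving the standard contribution intact. This is exactly where the explicit transposition action on $S^{(2,1)}$ through $u_{1},u_{2},u_{3}$ does the work, and it is verified by the isotypic decomposition of that eigenspace described above. I do not anticipate a genuine obstacle, since both parts reduce to the representation theory of $S_{3}$ recalled in Section 2 together with the multiplicity formulas already proved in this section.
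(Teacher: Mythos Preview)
Your proposal is correct and follows essentially the same approach as the paper, whose ``proof'' consists only of the brief discussion preceding the theorem statement (the hyperplane model for $S^{(2,1)}$, the construction of $P_{2},P_{3}$ from $P_{1}$, and the remark that the multiplicity is determined by a dimension count). You have supplied the representation-theoretic details the paper leaves implicit---in particular the clean argument that $P_{1}+P_{2}+P_{3}=0$ kills exactly the sign-isotypic part of the $(-1)$-eigenspace of $(1,2)$---so your write-up is in fact more complete than the original.
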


\vskip 2mm 

\begin{example} \hfill

\noindent (a) Let $d=3$, $j=3$. Set 
$$P_{1} = a (x_{1}^{2} - x_{1}^{2}) + b (x_{1}^{2} x_{3} - x_{2}^{2} x_{3}) + c (x_{1} x_{3}^{2} - x_{2} x_{3}^{2}),$$
which satisfies the condition $(1, 2) P_{1} = - P_{1}$. The condition $E(P_{1})=0$ yields
$$a + b=0, \ \ b + c =0. $$ 
Thus we may take 
$$P_{1} = x_{1}^{2} x_{2}  - x_{1} x_{2}^{2} - x_{1}^{2} x_{3} + x_{2}^{2} x_{3} + x_{1} x_{3}^{2} - x_{2} x_{3}^{2},$$
from which we obtain
\begin{equation*}
\begin{aligned}
& P_{2} = x_{2}^{2} x_{3} - x_{2} x_{3}^{2} + x_{1} x_{3}^{2} - x_{1} x_{2}^{2}  - x_{1}^{2} x_{3} + x_{1}^{2} x_{2}, \\
& P_{3} = x_{1} x_{3}^{2} - x_{1}^{2} x_{3} - x_{2} x_{3}^{2} + x_{1}^{2} x_{2} + x_{2}^{2} x_{3} - x_{1} x_{2}^{2}.
\end{aligned}
\end{equation*}
But $P_{1} + P_{2} + P_{3} \neq 0$ and hence there is no standard representation in $\text{Ker}(E) \cap A(3)_{3}$. 

\vskip 2mm 

\noindent (b) Let $d=5$, $j=4$. Using the condition $(1,2) P_{1} = - P_{1}$, we may write 
\begin{equation*}
\begin{aligned}
P_{1} = & a (x_{1}^{4} - x_{2}^{4}) + b (x_{1}^{3} x_{2} - x_{1} x_{2}^{3}) + c (x_{1}^{3} x_{3} - x_{2}^{3} x_{3}) + d (x_{1}^{2} x_{3}^{2} - x_{2}^{2} x_{3}^{2}) \\
& + e (x_{1}x_{3}^{3} - x_{2} x_{3}^{3}) + f (x_{1}^{2} x_{2} x_{3} - x_{1} x_{2}^{2} x_{3}).
\end{aligned}
\end{equation*}
Then $E(P_{1})=0$ and $P_{1} + P_{2} + P_{3} =0$ yield
$$ a + b + c=0, \ \ 3b + 2f =0, \ \ c + 3d + 2f =0, \ \ d+e =0.$$
Hence we obtain 
$$a = -e -  \frac{4}{3} f, \ \ b = - \frac{2}{3} f, \ \ c = e - \frac{2}{3} f, \ \ d = -e.$$

Therefore $\text{st}(5, 4)=2$ and we can compute two pairs of polynomials that provide explicit basis polynomials for  each  standard representation in $\text{Ker}(E) \cap A(5)_{4}$: 

(i) if we take $e= -1$, $f=0$, then we have 
\begin{equation*}
\begin{aligned}
& P_{1} = (x_{1}^{4} - x_{2}^{4}) - (x_{1}^{3} x_{3}  - x_{2}^{3} x_{3}) + (x_{1}^{2} x_{3}^{2} - x_{2}^{2} x_{3}^{2}) -( x_{1} x_{3}^{3} - x_{2} x_{3}^{3}), \\
&  P_{2} = (x_{2}^{4} - x_{3}^{4}) -( x_{1} x_{2}^{3}  - x_{1} x_{3}^{3})  +(x_{1}^{2} x_{2}^{2} - x_{1}^{2} x_{3}^{2}) - (x_{1}^{3} x_{2}  - x_{1}^{3} x_{3}).
\end{aligned}
\end{equation*}

(ii) if we take $e= 0$, $f=-3$, then we have 
\begin{equation*}
\begin{aligned}
& P_{1} =4(x_{1}^{4} - x_{2}^{4}) + 2( x_{1}^{3} x_{2} -x_{1} x_{2}^{3} ) + 2( x_{1}^{3} x_{3} - x_{2}^{3} x_{3}) - 3(x_{1}^{2} x_{2} x_{3} - x_{1} x_{2}^{2} x_{3}), \\
& P_{2} =4(x_{2}^{4} - x_{3}^{4}) + 2( x_{2}^{3} x_{3} -x_{2} x_{3}^{3} ) + 2( x_{1} x_{2}^{3} - x_{1} x_{3}^{3}) - 3(x_{1} x_{2}^{2}  x_{3} - x_{1} x_{2} x_{3}^{2}).
\end{aligned}
\end{equation*}

\end{example}

\vskip 5mm

\section{ Multiplicities of irreducible submodules: Recursive formulas}

In this section, we will give recursive formulas for the multiplicities of the irreducible submodules in 
$\text{Ker}(E) \cap A(d)_{j}$ for all $d \ge 3$, $j \ge 0$. 

\vskip 2mm

Recall that we denote by ${\mathscr P}_{k,l}(j)$ (resp. ${\mathscr P}_{k,l}^{+}(j)$) be the set of rectangular partitions 
(resp. strict partitions) of $j$ of size $k \times l$, and let $p_{k,l}(j)$ (resp. $p_{k,l}^{+}(j)$) be the number of 
elements in   ${\mathscr P}_{k,l}(j)$ (resp. ${\mathscr P}_{k,l}^{+}(j)$).  We say that a partition $\lambda = (\lambda_{1} 
\ge \lambda_{2} \ge \ldots \ge \lambda_{r} \ge 0)$ is {\it $2$-strict} if $\lambda_{j} - \lambda_{j+1} \ge 2$ for all
$1 \le j \le r$. We will use the notation ${\mathscr P}^{+\!+}_{k,l}(j)$ for the set of $2$-strict rectangular partitions 
of size $k \times l$ and $p_{k,l}^{+\!+}(j)$ the number of elements in   ${\mathscr P}^{+\!+}_{k,l}(j)$. 

\vskip 2mm 

Let $\Delta = (k-1, k-2, \ldots, 1, 0)$ be a rectangular partition of $\frac{k(k-1)}{2}$ of size $k \times (k-1)$. 
Then it is straightforward to see that there is a bijection 
\begin{equation} \label{eq:strict}
{\mathscr P}^{+}(j) \overset {\sim} \longrightarrow{\mathscr P}_{k, l-k+1} \left(j-\frac{k(k-1)}{2}\right) \quad \text{given by}
\ \ \lambda \longmapsto \lambda - \Delta.
\end{equation}

Similarly, we have
\begin{equation} \label{eq:2-strict}
{\mathscr P}_{k,l}^{+\!+} (j) \overset{\sim} \longrightarrow {\mathscr P}_{k, l-k+1}^{+} \left( j - \frac{k(k-1)}{2} \right).
\end{equation} 

In particular, when $k=3$, $l=d-1$, we have 
\begin{equation} \label{eq:n=3}
{\mathscr P}_{3, d-1}^{+}(j) \overset {\sim} \longrightarrow {\mathscr P}_{3, d-3}(j-3), \quad 
{\mathscr P}_{3, d-1}^{+\!+}(j) \overset {\sim} \longrightarrow {\mathscr P}_{3, d-3}^{+}(j-3).
\end{equation}

Therefore, we obtain the following recursive relations.

\vskip 2mm

\begin{proposition} \label{prop:sign-triv}
{\rm For all $d \ge 3$, $j \ge 0$, we have 
\begin{equation} \label{eq:sign-triv}
\text{sign}(d,j) = \text{triv}(d-2, j-3).
\end{equation}
}
\end{proposition}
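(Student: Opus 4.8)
The plan is to derive \eqref{eq:sign-triv} by chaining the two enumerative formulas already established, Theorem \ref{thm:trivial}(b) and Theorem \ref{thm:sign}(b), through the combinatorial bijection recorded in \eqref{eq:n=3}. Recall that $\text{sign}(d,j) = p_{3,d-1}^{+}(j) - p_{3,d-1}^{+}(j-1)$ and $\text{triv}(d-2,j-3) = p_{3,d-3}(j-3) - p_{3,d-3}(j-4)$, so the whole proposition reduces to the two numerical identities $p_{3,d-1}^{+}(j) = p_{3,d-3}(j-3)$ and $p_{3,d-1}^{+}(j-1) = p_{3,d-3}(j-4)$.

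First I would invoke the bijection ${\mathscr P}_{3,d-1}^{+}(j) \overset{\sim}\longrightarrow {\mathscr P}_{3,d-3}(j-3)$ from \eqref{eq:n=3}, which is the $k=3$, $l=d-1$ case of \eqref{eq:strict} with $\Delta = (2,1,0)$ and $\frac{k(k-1)}{2} = 3$; this gives $p_{3,d-1}^{+}(j) = p_{3,d-3}(j-3)$ immediately, for every $j \ge 0$ (with the usual convention that both sides are $0$ when the argument is negative). Applying the same bijection with $j$ replaced by $j-1$ yields $p_{3,d-1}^{+}(j-1) = p_{3,d-3}(j-4)$. Subtracting the second identity from the first gives
\begin{equation*}
\text{sign}(d,j) = p_{3,d-1}^{+}(j) - p_{3,d-1}^{+}(j-1) = p_{3,d-3}(j-3) - p_{3,d-3}(j-4) = \text{triv}(d-2,j-3),
\end{equation*}
which is exactly \eqref{eq:sign-triv}.

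The only point that needs care, and what I expect to be the main obstacle, is a boundary/range check: Theorem \ref{thm:trivial}(b) and Theorem \ref{thm:sign}(b) are stated for ``given $d$ and $j$'', and one should confirm that the degree $j-3$ and exponent $d-2$ appearing on the right-hand side fall within the regime where $\text{triv}$ is defined and where the bijection \eqref{eq:strict} is an honest bijection (in particular that $l - k + 1 = d-3 \ge 0$, i.e. $d \ge 3$, which is part of the hypothesis, and that the shift by $\Delta = (2,1,0)$ keeps entries $\le d-3$ precisely when the original strict partition had entries $\le d-1$). Once one observes that a strict partition $\mu = (a > b > c \ge 0)$ with $a \le d-1$ corresponds under $\mu \mapsto \mu - \Delta$ to $(a-2 \ge b-1 \ge c \ge 0)$ with largest part $a - 2 \le d-3$, and that this map is visibly invertible, the range compatibility is automatic and the proof is complete. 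I would also remark that the identity is consistent with \eqref{eq:mult} and the explicit small cases computed earlier (e.g. $\text{sign}(9,9) = 2 = \text{triv}(7,6)$, matching Example \ref{example3.2}(b)).
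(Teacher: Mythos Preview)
Your proof is correct and follows essentially the same route as the paper: invoke Theorem~\ref{thm:sign}(b) and Theorem~\ref{thm:trivial}(b), then use the bijection \eqref{eq:strict}/\eqref{eq:n=3} (subtracting $\Delta=(2,1,0)$) to convert $p_{3,d-1}^{+}(j)$ into $p_{3,d-3}(j-3)$ and likewise for $j-1$. The paper's proof is the same one-line computation, without the extra boundary discussion you include.
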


{\it Proof.} \ By Theorem \ref{thm:sign} and \eqref{eq:strict}, we have 
\begin{equation*}
\text{sign}(d,j) = p_{3, d-1}^{+}(j) - p_{3, d-1}^{+}(j-1) = p_{3, d-3}(j-3) - p_{3, d-3}(j-4) 
=\text{triv}(d-2, j-3).  \qed
\end{equation*}

\vskip 2mm 

In the next proposition, we will prove the opposite direction of recursive relations.

\vskip2mm

\begin{proposition} \label{prop:triv-sign}
{\rm For all $d \ge 3$, $j \ge 0$, we have 
\begin{equation} \label{eq:triv-sign}
\text{triv}(d,j) = \begin{cases} 
\, \text{sign}(d-2, j-3) + 1 \ \ & \text{if} \ j \le d-1, \\
\, \text{sign}(d-2, j-3) \ \ & \text{if} \ j \ge d. 
\end{cases}
\end{equation}
}
\end{proposition}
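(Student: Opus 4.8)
The plan is to mirror the proof of Proposition~\ref{prop:sign-triv}, running the bijections \eqref{eq:strict} and \eqref{eq:n=3} in the reverse direction while keeping careful track of the ``$+1$'' coming from the trivial highest-weight summand. By Theorem~\ref{thm:trivial}(b) we have $\text{triv}(d,j)=p_{3,d-1}(j)-p_{3,d-1}(j-1)$, so it suffices to relate differences of $p_{3,d-1}$ to differences of $p_{3,d-1}^{+}$ (equivalently, to $\text{sign}(d-2,j-3)$ via Theorem~\ref{thm:sign}(b) and \eqref{eq:n=3}). First I would record the elementary combinatorial identity relating rectangular partitions of size $3\times(d-1)$ with no repeated entry (strict) to all rectangular partitions: every partition $\lambda=(a\ge b\ge c\ge 0)$ of $j$ with $a\le d-1$ either has all parts distinct, or has a repetition, and the repetition-free ones are exactly counted by $p_{3,d-1}^{+}(j)$. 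The key point is to express $p_{3,d-1}(j)$ in terms of $p_{3,d-1}^{+}$ of nearby arguments, accounting for the partitions with repeated parts.

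The cleanest route is probably to split $\mathscr{P}_{3,d-1}(j)$ according to the pattern of equalities among $a,b,c$: the three disjoint classes are (i) $a>b>c$ (strict), (ii) exactly two parts equal, and (iii) $a=b=c$. Class (i) is counted by $p_{3,d-1}^{+}(j)$. Class (iii) contributes $1$ precisely when $j$ is a multiple of $3$ with $j/3\le d-1$, i.e. $j\le 3(d-1)$, which in our range $j\le\lfloor 3(d-1)/2\rfloor$ is automatic when $3\mid j$. Class (ii), the partitions with exactly one repeated value, should be matched bijectively with certain strict rectangular partitions of size $2\times(d-1)$ (or of $3\times(d-1)$ with a part removed), so that after taking the difference $p_{3,d-1}(j)-p_{3,d-1}(j-1)$ the class-(ii) contributions telescope against each other and against part of the strict count. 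Carrying out this bookkeeping, $\text{triv}(d,j)=p_{3,d-1}(j)-p_{3,d-1}(j-1)$ should collapse to $\bigl(p_{3,d-1}^{+}(j)-p_{3,d-1}^{+}(j-1)\bigr)$ plus a correction term that equals $1$ exactly in the stated range and $0$ otherwise. By \eqref{eq:n=3} and Theorem~\ref{thm:sign}(b), $p_{3,d-1}^{+}(j)-p_{3,d-1}^{+}(j-1)=p_{3,d-3}(j-3)-p_{3,d-3}(j-4)=\text{sign}(d-2,j-3)$, giving the formula.

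An alternative, and perhaps more transparent, derivation uses the closed form \eqref{eq:mult} for $\text{mult}(d,j)$ together with Proposition~\ref{prop:sign-triv} applied to $d-2$. Indeed $\text{sign}(d-2,j-3)=\text{triv}(d-4,j-6)$, so one can set up a two-step recursion; but a slicker path is to note that Theorem~\ref{thm:standard}(b) gives $\text{mult}(d,j)=\text{triv}(d,j)+\text{sign}(d,j)+2\,\text{st}(d,j)$, and to combine this with the analogous identity for $(d-2,j-3)$. Since $\text{mult}(d,j)$ and $\text{mult}(d-2,j-3)$ are both given explicitly by \eqref{eq:mult}, and since one can show $\text{st}(d,j)=\text{st}(d-2,j-3)$ directly (the standard-representation basis polynomials of $A(d-2)_{j-3}$ push forward to $A(d)_{j}$ after multiplying by the skew-symmetrizing factor $x_1x_2x_3\cdot(x_1-x_2)(x_2-x_3)(x_1-x_3)$, matching the bijection \eqref{eq:strict}), subtracting the two instances of Theorem~\ref{thm:standard}(b) isolates $\text{triv}(d,j)-\text{sign}(d,j)-\text{sign}(d-2,j-3)=\text{mult}(d,j)-\text{mult}(d-2,j-3)$, and then Proposition~\ref{prop:sign-triv} eliminates $\text{sign}(d,j)=\text{triv}(d-2,j-3)$; a short case check on $\text{mult}(d,j)-\text{mult}(d-2,j-3)$ using \eqref{eq:mult} produces exactly the jump from $0$ to $1$ at $j=d$.

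The main obstacle will be the case analysis at the boundary. The regime $j\le d-1$ versus $j\ge d$ is where $\text{mult}(d,j)$ switches branches in \eqref{eq:mult}, and also where the $a=b=c$ partition and the various ``edge'' rectangular partitions (those with a part equal to $d-1$) enter or leave the counts $p_{3,d-1}$, $p_{3,d-1}^{+}$. I expect the bulk of the work is verifying that all these edge effects conspire to give precisely the single extra unit $+1$ when $j\le d-1$ and cancel exactly when $j\ge d$; the rest is the bijective/telescoping argument, which is routine once the classes are set up correctly. I would therefore organize the final writeup around the difference $\text{mult}(d,j)-\text{mult}(d-2,j-3)$ computed from \eqref{eq:mult}, with the combinatorial bijections relegated to justifying $\text{st}(d,j)=\text{st}(d-2,j-3)$ and re-deriving Proposition~\ref{prop:sign-triv}.
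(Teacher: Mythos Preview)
There is a genuine gap in both routes you sketch. In your first approach you write
\[
p_{3,d-1}^{+}(j)-p_{3,d-1}^{+}(j-1)=p_{3,d-3}(j-3)-p_{3,d-3}(j-4)=\text{sign}(d-2,j-3),
\]
but the middle expression is $\text{triv}(d-2,j-3)$ by Theorem~\ref{thm:trivial}(b), not $\text{sign}(d-2,j-3)$; equivalently, the left-hand side is $\text{sign}(d,j)$ (Theorem~\ref{thm:sign}(b)), which by Proposition~\ref{prop:sign-triv} equals $\text{triv}(d-2,j-3)$. So your telescoping argument, even if the bookkeeping on repeated parts went through, would only relate $\text{triv}(d,j)$ to $\text{sign}(d,j)$, not to $\text{sign}(d-2,j-3)$. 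The paper closes this gap by introducing the auxiliary notion of \emph{$2$-strict} partitions $\mathscr{P}_{3,d-1}^{+\!+}$ and the bijection \eqref{eq:2-strict}, which gives $p_{3,d-1}^{+\!+}(j)-p_{3,d-1}^{+\!+}(j-1)=p_{3,d-3}^{+}(j-3)-p_{3,d-3}^{+}(j-4)=\text{sign}(d-2,j-3)$; the combinatorial core of the proof is then a four-step comparison of $p_{3,d-1}$ with $p_{3,d-1}^{+\!+}$ (not with $p_{3,d-1}^{+}$) via the map $\lambda\mapsto(\lambda_1-1,\lambda_2,\lambda_3)$.

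Your alternative route also fails. You assert $\text{st}(d,j)=\text{st}(d-2,j-3)$ unconditionally, but in fact (Corollary~\ref{cor:st-recursive}, which in the paper is \emph{deduced from} the proposition you are trying to prove) one has $\text{st}(d,j)=\text{st}(d-2,j-3)+1$ when $j\le d-1$; assuming the unconditional equality would force the $+1$ in \eqref{eq:triv-sign} to disappear. Your proposed pushforward by $x_1x_2x_3\cdot(x_1-x_2)(x_2-x_3)(x_1-x_3)$ is also off: that factor has degree $6$, not $3$, so it cannot carry $A(d-2)_{j-3}$ into $A(d)_j$, and being skew-symmetric it would in any case convert standard summands into the wrong isotype. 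Finally, even if a correct recursion for $\text{st}$ were available independently, invoking it here would be circular relative to the paper's logical order.
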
 

{\it Proof.} \ Note that 
\begin{equation*}
p_{3,d-1}^{+\!+}(j) - p_{3, d-1}^{+\!+}(j-1) = p_{3,d-3}^{+}(j-3) - p_{3,d-3}^{+}(j-4) = \text{sign}(d-2, j-3).
\end{equation*}

Hence our claim is equivalent to the relations
\begin{equation} \label{eq:triv-sign2}
p_{3, d-1}(j) - p_{3, d-1} (j-1) 
= \begin{cases}
p_{3,d-1}^{+\!+}(j) - p_{3, d-1}^{+\!+}(j-1) +1 \ \ & \text{if} \ j \le d-1, \\
p_{3, d-1}^{+\!+}(j) - p_{3,d-1}^{+\!+}(j-1) \ \ & \text{if} \ j \ge d.
\end{cases} 
\end{equation}

We will prove our claim in 4 steps. 

\vskip 2mm 

{\bf Step 1:} \ For all $d \ge 3$, $j \ge 0$, we have 
\begin{align*}
& (p_{3,d-1}(j) - p_{3,d-1}(j-1)) - (p_{3, d-1}^{+\!+}(j) - p_{3,d-1}^{+\!+}(j-1)) \\
& = \# \{\lambda = (\lambda_{1}, \lambda_{2}, \lambda_{3} \in {\mathscr P}_{3, d-1}(j) 
\mid \lambda_{1} = \lambda_{2} \} \\
&\phantom{=} - \# \{ \lambda = (\lambda_{1}, \lambda_{2}, \lambda_{3} \in {\mathscr P}_{3, d-1}(j) \mid 
\lambda_{1} = \lambda_{2} + 2, \, \text{$\lambda$ is $2$-struct}  \} \\
&\phantom{=} - \# \{\mu = (\mu_{1}, \mu_{2}, \mu_{3}) \in {\mathscr P}_{3,d-1}(j-1) \mid
\mu_{1} = d-1, \, \text{$\mu$ is not $2$-strict}  \}.
\end{align*}

\vskip 2mm 

Roughly speaking, our main idea is to use the following map 
 \begin{equation*}
 {\mathscr P}_{3,d-1}(j) \ni \lambda = (\lambda_{1}, \lambda_{2}, \lambda_{3}) 
 \longmapsto \mu=(\lambda_{1} -1, \lambda_{2}, \lambda_{3}) \in  {\mathscr P}_{3,d-1}(j-1) 
 \end{equation*}
whenever it makes sense and add the missing partitions.  For simplicity, we will just use $\lambda$
and $\mu$ for the partitions in ${\mathscr P}_{3, d-1}(j)$ and ${\mathscr P}_{3, d-1}(j-1)$, respectively. 

\vskip 2mm 

We first observe 
\begin{align*}
& p_{3,d-1}(j) - p_{3, d-1}(j-1) \\
& = \# \{\ \lambda \mid \lambda_{1} = \lambda_{2}\}
+ \# \{ \lambda \mid \lambda_{1} = \lambda_{2}+1 \} 
 - \# \{\mu \mid \mu_{1} = \mu_{2} \}  \\
& \ \  + \# \{\lambda \mid \lambda_{1} = \lambda_{2} + 2 \} -  \# \{\mu \mid \mu_{1} = \mu_{2}  + 1 \} \\
& \ \ + \# \{\lambda \mid \lambda_{1} \ge  \lambda_{2} + 3, \, \text{$\lambda$ is 2-strict} \}
+ \#(\{\lambda \mid \lambda_{1} \ge  \lambda_{2} + 3, \, \text{$\lambda$ is not 2-strict} \} \\
& \ \ -  \# \{\mu \mid \text{$\mu$ is 2-strict} \} -  \#(\{\mu \mid \mu_{1} \ge \mu_{2} + 2, \, \text{$\mu$ is not 2-strict} \} \\
& = \# \{\ \lambda \mid \lambda_{1} = \lambda_{2}\} - \# \{\mu \mid \mu_{1} = d-1 \}.
\end{align*}

On the other hand, we have 
\begin{align*}
& p_{3,d-1}^{+\!+}(j) - p_{3, d-1}^{+\!+}(j-1) \\
&= \# \{ \lambda \mid  \lambda_{1} \ge \lambda_{2} + 3, \, \text{$\lambda$ is 2-strict} \} 
 - \# \{ \mu \mid \text{$\mu$ is 2-strict} \} + \# \{\lambda \mid \lambda_{1} = \lambda_{2} + 2, \, 
\text{$\lambda$ is 2-strict} \} \\
& = \# \{ \lambda \mid \lambda_{1} = \lambda_{2} + 2, \, \text{$\lambda$ is 2-strict} \} 
- \# \{ \mu \mid \mu_{1} = d-1, \, \text{$\mu$ is 2-strict} \}. 
\end{align*}

Hence we obtain 
\begin{align*}
&  (p_{3,d-1}(j) - p_{3,d-1}(j-1)) - (p_{3, d-1}^{+\!+}(j) - p_{3,d-1}^{+\!+}(j-1)) \\
& = \# \{\lambda \mid \lambda_{1} = \lambda_{2} \} 
 - \# \{ \lambda  \mid \lambda_{1} = \lambda_{2} + 2, \, \text{$\lambda$ is $2$-struct}  \}   - \# \{\mu \mid \mu_{1} = d-1, \, \text{$\mu$ is not $2$-strict}  \}.
\end{align*}

\vskip 3mm 

{\bf Step 2:} If $\lambda = (a,a,b) \in {\mathscr P}_{3,d-1}(j)$ for $d \ge 3$, $j \ge 0$, then $a+1 \le d-1$. 

\vskip 2mm 

To prove this, since $j=2a+b$. we have only to check our claim when $j$ and $d$ are maximal.

\vskip 2mm 

If $d=3$, then $j=3$ $a=1$, which verifies our claim. 
If $d$ is odd $\ge 5$, then we have 
$$j = \dfrac{3(d-1)}{2} \ \ \text{and} \ \ a \le \dfrac{3(d-1)}{4}.$$
Hence  we obtain 
$$(d-1) - (a+1) \ge (d-1) - \left(\frac{3(d-1)}{4} +1 \right) = \frac{d-5}{4} \ge 0.$$

If $d$ is even, then $d \ge 4$ and 
$$j = \left\lfloor \frac{3(d-1)}{2} \right\rfloor = \frac{3d-4}{2}, \quad a \le \frac{3d-4}{4}.$$
Hence we obtain 
$$(d-1) - (a+1) \ge (d-1) - \left(\frac{3d-4}{4} + 1 \right) = \frac{d-4}{4} \ge 0,$$
which proves our claim. 

\vskip 3mm 

{\bf Step 3:} $\# \{\lambda \in {\mathscr P}_{3,d-1}(j) \mid \lambda_{1} = \lambda_{2} \} 
= \# \{\lambda \in {\mathscr P}_{3,d-1}(j) \mid \lambda_{1} = \lambda_{2} + 2, \, 
\text{$\lambda$ is 2-strict} \} + 1.$

\vskip 3mm 

Set 
\begin{equation*}
\begin{aligned}
& L=\{\lambda  \in {\mathscr P}_{3,d-1}(j) \mid \lambda_{1} = \lambda_{2} \}, \ \ \ 
L^{+} = \{\lambda  \in {\mathscr P}_{3,d-1}(j) \mid \lambda = \lambda_{2} \ge \lambda_{3} +3 \},  \\
& R = \{ \lambda  \in {\mathscr P}_{3,d-1}(j) \mid \lambda_{1}=\lambda_{2} + 2, \, \text{$\lambda$ is 2-strict} \}.
\end{aligned}
\end{equation*}

\vskip 2mm 

Then it is easy to see that there is a bijection 
$$L^{+} \overset {\sim} \longrightarrow R \quad \text{given by} \ \ 
\lambda = (a,a,b) \longmapsto \lambda' = (a+1, a-1, b),$$
which implies  $L \setminus L^{+}  = \{ \lambda = (a,a,b) \mid 
\lambda_{1} = \lambda_{2} \le \lambda_{3} +2 \}.$

\vskip 2mm 

Therefore, we have the following three cases:

\vskip 2mm

\hskip 3mm (i) if $j \equiv 1 \  \ (\text{mod} \ 3)$, then $a=b+2$ and $L \setminus L^{+} = \{(a,a,a-2)\}$,  

\vskip 2mm 

\hskip 3mm (ii) if $j \equiv 2 \  \ (\text{mod} \ 3)$, then $a=b+1$ and $L \setminus L^{+} = \{(a,a,a-1)\}$,  

\vskip 2mm 

\hskip 3mm (iii) if $j \equiv 0 \  \ (\text{mod} \ 3)$, then $a=b$ and $L \setminus L^{+} = \{(a,a,a)\}$.

\vskip 2mm 

\noindent 
Hence $\# (L \setminus L^{+}) =1$.

\vskip 3mm 

{\bf Step 4:} 

\vskip 2mm 

(a) If $j \le d-1$, then $\{ \mu \in {\mathscr P}_{3,d-1}(j-1) \mid \mu_{1} = d-1, \, 
\text{$\mu$ is not 2-strict} \} = \emptyset$. 

\vskip 2mm 

(b) If $j \ge d$, then $\# (\{ \mu \in {\mathscr P}_{3,d-1}(j-1) \mid \mu_{1} = d-1, \, 
\text{$\mu$ is not 2-strict} \}) = 1$. 

\vskip 2mm 

The statement (a) is clear because $\mu = (d-1, a,b)$ cannot be a partition of $j-1$. 

\vskip 2mm 

Suppose $j \ge d$. We begin with $\mu_{0} = (d-1, j-d, 0)$. If $\mu_{0}$ is not 2-strict,
then $j-d=1$ and we stop there. Otherwise, we continue to obtain 
$\mu_{k}=(d-1, j-d-k, k)$, where  $k$ is the largest non-negative integer such that 
$\mu_{k}$ is 2-strict. Then $j-d-k \ge k+2$ and $\mu_{k+1}=(d-1, j-d-k-1, k+1)$,
which is not 2-strict; i.e., $j-d-k-1 \le 1$. Then $\mu_{k+2} = (d-1, j-d-k-2, k+2)$ is not
even a partition and our assertion is proved. 

\vskip 3mm 

Combining (Step 1) - (Step 4), we obtain   \eqref{eq:triv-sign2}. \qed

\vskip 3mm 

\begin{corollary} \label{cor:st-recursive}
{\rm
For all $d\ge 3$, $j \ge 0$, we have 
\begin{equation}  \label{eq:st-recursive}
\text{st}(d,j) = 
\begin{cases}
\text{st}(d-2, j-3) + 1  \ \ & \text{if} \ j \le d-1, \\
\text{st}(d-2, j-3) \ \ & \text{if}  \ j \ge d.
\end{cases}
\end{equation}
}
\end{corollary}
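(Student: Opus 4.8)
The plan is to obtain \eqref{eq:st-recursive} by feeding the two recursions of Propositions~\ref{prop:sign-triv} and~\ref{prop:triv-sign} into the closed formula \eqref{eq:standard} of Theorem~\ref{thm:standard}. Write $\epsilon = \epsilon(d,j)$ for the quantity that equals $1$ when $j \le d-1$ and $0$ when $j \ge d$. Adding \eqref{eq:sign-triv} to \eqref{eq:triv-sign} gives the single identity
\begin{equation*}
\text{triv}(d,j) + \text{sign}(d,j) = \text{triv}(d-2,j-3) + \text{sign}(d-2,j-3) + \epsilon ,
\end{equation*}
so the ``trivial $+$ sign'' contribution to \eqref{eq:standard} transforms under $(d,j) \mapsto (d-2,j-3)$ exactly by the constant $\epsilon$.

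Next I would substitute this into $2\,\text{st}(d,j) = \text{mult}(d,j) - \text{triv}(d,j) - \text{sign}(d,j)$ and into the analogous identity for $(d-2,j-3)$, and subtract; all the Specht-multiplicity terms then cancel and one is left with
\begin{equation*}
2\,\text{st}(d,j) - 2\,\text{st}(d-2,j-3) = \bigl(\text{mult}(d,j) - \text{mult}(d-2,j-3)\bigr) - \epsilon .
\end{equation*}
Hence the corollary is equivalent to the purely numerical assertion that $\text{mult}(d,j) - \text{mult}(d-2,j-3) = 3\epsilon$, that is, the Hilbert-function difference is $3$ for $j \le d-1$ and $0$ for $j \ge d$. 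This I would verify directly from \eqref{eq:mult}: for $3 \le j \le d-1$ both arguments lie in the linear regime, giving $(j+1) - \bigl((j-3)+1\bigr) = 3$; for $j \ge d+1$ both lie in the regime where $\text{mult} = 3d - 2 - 2j$, and the difference is $\bigl(3d-2-2j\bigr) - \bigl(3(d-2)-2-2(j-3)\bigr) = 0$; and the single boundary degree $j = d$, where $\text{mult}(d,d) = d-2 = \text{mult}(d-2,d-3)$, is immediate.

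The step requiring genuine care — and the main obstacle — is the low-degree bookkeeping: the case split in \eqref{eq:mult} (at $j = d$), the one in Proposition~\ref{prop:triv-sign} (also at $j = d$), and the range bound $j \le \lfloor 3(d-1)/2 \rfloor$ versus $j-3 \le \lfloor 3(d-3)/2 \rfloor$ for the shifted pair do not automatically line up, so each range must be treated separately to justify the formal cancellation. Moreover, for $j < 3$ the recursion refers to $\text{st}(d-2,j-3)$, $\text{triv}(d-2,j-3)$ and $\text{sign}(d-2,j-3)$ at a negative degree, where the conventions $\text{mult} = \text{triv} = \text{sign} = 0$ must be invoked and the small cases $j = 0, 1, 2$ checked by hand against \eqref{eq:mult}, \eqref{eq:triv} and \eqref{eq:sign}. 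Once these conventions are pinned down, the computation above is valid in every range and yields \eqref{eq:st-recursive}.
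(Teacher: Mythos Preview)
Your proposal is correct and follows essentially the same route as the paper: both combine the identity $2\,\text{st}=\text{mult}-\text{triv}-\text{sign}$ from Theorem~\ref{thm:standard} with the two recursions of Propositions~\ref{prop:sign-triv} and~\ref{prop:triv-sign}, reducing the claim to the computation $\text{mult}(d,j)-\text{mult}(d-2,j-3)=3\epsilon$, which is then checked case by case from \eqref{eq:mult}. Your explicit attention to the boundary cases $j<3$ and to the matching of the case splits is, if anything, more careful than the paper's own treatment.
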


\begin{proof} \ Note that 
\begin{equation} \label{eq:st-mult}
\begin{aligned}
& 2 (\text{st}(d,j) - \text{st}(d-2, j-3)) 
+ (\text{triv}(d,j) - \text{triv}(d-2, j-3)) 
+ (\text{sign}(d,j) - \text{sign}(d-2,j-3)) \\
&
 = \text{mult}(d,j) - \text{mult}(d-2,j-3).
\end{aligned}
\end{equation}

Let $L$ (resp. $R$) denote the left-hand side (resp. right-hand side) of \eqref{eq:st-mult}. 

\vskip 2mm 

If $j \le d-1$, then 
\begin{equation*}
L = 2 ( \text{st}(d,j) - \text{st}(d-2, j-3) ) + 1, \ \ R = (j+1) - (j-2) = 3.
\end{equation*}
Hence we get 
$$\text{st}(d,j) - \text{st}(d-2,j-3) =1.$$

If $j \ge d+1$, then 
$$L = 2 ( \text{st}(d,j) - \text{st}(d-2, j-3) ), \ \ R=(3d-2-2j) - (3(d-2)-2-2(j-3))=0,$$
which yields 
$$\text{st}(d,j) - \text{st}(d-2,j-3) =0.$$

If $j=d$, then 
$$L = 2 ( \text{st}(d,d) - \text{st}(d-2, d-3) ), \ \ R=(3d-2-2d)-(d-2)=0,$$
which completes the proof. 
\end{proof}

\vskip 2mm 

\begin{example} \ In this example, we illustrate how to compute multiplicities  
irreducible components using the resursive formulas \eqref{eq:sign-triv}, \eqref{eq:triv-sign} and 
\eqref{eq:st-recursive}.

\vskip 2mm 

(a) Suppose that $d=9$. 

\vskip 2mm 

\hskip 4mm (i) If $j=8$, then we have
\begin{equation*}
\begin{aligned}
& \text{triv}(9,8) = \text{sign}(7,5) +1 = \text{triv} (5, 2) +1 = \text{sign}(3, -2) +2 =2, \\
& \text{sign}(9,8) = \text{triv}(7,5) =\text{sign} (5,2) +1 = \text{triv}(3,-2) +1 =1, \\
& \text{st}(9,8) = \text{st}(7,5) + 1 = \text{st}(5,2)  + 2 =1+2= 3. 
\end{aligned}
\end{equation*}

\hskip 4mm (ii) If $j=12$, then we have
\begin{equation*}
\begin{aligned}
& \text{triv}(9,12) = \text{sign}(7,9)  = \text{triv} (5, 6)  = \text{sign}(3, 3) =1, \\
& \text{sign}(9,12) = \text{triv}(7,9) =\text{sign} (5,6) = \text{triv}(3,3) =0, \\
& \text{st}(9,12) = \text{st}(7,9)  = \text{st}(5,6)  = \text{t}(3,3)  = 0. 
\end{aligned}
\end{equation*}

\hskip 4mm (iii) If $j=6$, then we have
\begin{equation*}
\begin{aligned}
& \text{triv}(9,6) = \text{sign}(7,3) +1 = \text{triv} (5, 0) +1 = 1=1 =2, \\
& \text{sign}(9,6) = \text{triv}(7,3) =\text{sign} (5,0) +1 = 1, \\
& \text{st}(9,6) = \text{st}(7,3) + 1 = \text{st}(5,0)  + 2 = 2. 
\end{aligned}
\end{equation*}

\vskip 3mm 

(b) Suppose that $d=10$. 

\vskip 2mm 

\hskip 4mm (i) If $j=9$, then we have
\begin{equation*}
\begin{aligned}
& \text{triv}(10,9) = \text{sign}(8,6) +1 = \text{triv} (6, 3) +1 = \text{sign}(4, 0) +2 =2, \\
& \text{sign}(10,9) = \text{triv}(8,6) =\text{sign} (6,3) +1 = \text{triv}(4,0) +1 =1+1=2, \\
& \text{st}(10,9) = \text{st}(8,6) + 1 = \text{st}(6,3)  + 2 = \text{st}(4,0) +3 = 3. 
\end{aligned}
\end{equation*}

\hskip 4mm (ii) If $j=12$, then we have
\begin{equation*}
\begin{aligned}
& \text{triv}(10,12) = \text{sign}(8,9)  = \text{triv} (6, 6)  = \text{sign}(4, 3) =1, \\
& \text{sign}(10,12) = \text{triv}(8,9) =\text{sign} (6,6) = \text{triv}(4,3) =1, \\
& \text{st}(10,12) = \text{st}(8,9)  = \text{st}(6,6)  = \text{st}(4,3)  = 1. 
\end{aligned}
\end{equation*}

\hskip 4mm (iii) If $j=6$, then we have
\begin{equation*}
\begin{aligned}
& \text{triv}(10,6) = \text{sign}(8,3) +1 = \text{triv} (5, 0) +1 = 1+1 =2, \\
& \text{sign}(10,6) = \text{triv}(8,3) =\text{sign} (6,0) +1 = 1, \\
& \text{st}(10,6) = \text{st}(8,3) + 1 = \text{st}(6,0)  + 2 = 2. 
\end{aligned}
\end{equation*}

\end{example}

\vskip 5mm 

\section{Multiplicities of irreducible submodules: closed form formulas}

In this section, using the recursive formulas in Proposition \ref{prop:sign-triv}, Proposition \ref{prop:triv-sign}
and Corollary \ref{cor:st-recursive}, we derive explicit closed form formulas for the multiplicities of
irreducible submodules in $\text{Ker}(E) \cap A(d)_{j}$ for all $d \ge 3$, $j \ge 0$. 

\vskip 2mm 

We first take care of the case when $j=d-1$, which will play the role of corner-stone of our closed form formulas.

\vskip 2mm 

\begin{lemma} \label{lem:j=d-1} \hfill

\vskip 2mm 

{\rm 
(a) If $d$ is odd $\ge 3$, then we have 
$$\text{triv}(d, d-1) = \left\lfloor \frac{d-1}{6}\right\rfloor +1, \ \ \text{sign}(d, d-1) = \left\lfloor \frac{d-1}{6} \right\rfloor .$$

(b) If $d$ is even $\ge 4$, then we have 
$$\text{triv}(d, d-1) = \text{sign}(d,d-1) = \left\lfloor \frac{d+2}{6}\right\rfloor.$$

}
\end{lemma}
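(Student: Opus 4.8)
The plan is to reduce each of $\text{triv}(d,d-1)$ and $\text{sign}(d,d-1)$ to the number of non‑negative integer solutions of an equation of the form $2a+3b=n$, and then to evaluate that count by an elementary residue computation.

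First I would set up the reduction. By Theorem~\ref{thm:trivial}, $\text{triv}(d,j)=p_{3,d-1}(j)-p_{3,d-1}(j-1)$, and for every $j$ with $0\le j\le d-1$ a partition of $j$ into at most three parts automatically has all parts $\le j\le d-1$, so the size restriction is vacuous: $p_{3,d-1}(j)=p_3(j)$, the number of partitions of $j$ into at most three parts (with $p_3(j)=0$ for $j<0$). The generating-function identity
\[
\sum_{n\ge 0}\bigl(p_3(n)-p_3(n-1)\bigr)x^n=\frac{1-x}{(1-x)(1-x^2)(1-x^3)}=\frac{1}{(1-x^2)(1-x^3)}
\]
identifies $p_3(n)-p_3(n-1)$ with $r(n):=\#\{(a,b)\in\mathbf{Z}_{\ge 0}^2\mid 2a+3b=n\}$. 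Hence $\text{triv}(d,j)=r(j)$ whenever $0\le j\le d-1$, so in particular $\text{triv}(d,d-1)=r(d-1)$. For the sign multiplicity I would invoke Proposition~\ref{prop:sign-triv} together with the same remark: $\text{sign}(d,d-1)=\text{triv}(d-2,d-4)=r(d-4)$, the last equality being legitimate since $d-4\le(d-2)-1$. (Alternatively one can apply the bijection~\eqref{eq:n=3} directly to Theorem~\ref{thm:sign}.) The degenerate value $d=3$, where $r(d-4)=r(-1)=0$, I would dispose of by hand.

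Next I would record the elementary closed form for $r(n)$. Counting the admissible values of $b$ (those with $0\le 3b\le n$ and $b\equiv n\pmod 2$), or equivalently using the recursion $r(n)=r(n-6)+1$ for $n\ge 6$ with base values $r(0),\dots,r(5)=1,0,1,1,1,1$, one obtains
\[
r(n)=\begin{cases}\lfloor n/6\rfloor & \text{if }n\equiv 1\pmod 6,\\ \lfloor n/6\rfloor+1 & \text{otherwise.}\end{cases}
\]
Then I would substitute. For $d$ odd, $d-1$ is even hence $\not\equiv 1\pmod 6$, so $\text{triv}(d,d-1)=r(d-1)=\lfloor(d-1)/6\rfloor+1$; and $d-4$ is odd, and a short check of the residues $d-4\equiv 1,3,5\pmod 6$ (comparing $r(d-4)$ with $\lfloor((d-4)+3)/6\rfloor$) gives $r(d-4)=\lfloor(d-1)/6\rfloor$, i.e.\ $\text{sign}(d,d-1)=\lfloor(d-1)/6\rfloor$. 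For $d$ even, $d-4$ is even, so $r(d-4)=\lfloor(d-4)/6\rfloor+1=\lfloor(d+2)/6\rfloor$; and $d-1$ is odd, and checking the three residues $d-1\equiv 1,3,5\pmod 6$ shows $r(d-1)=\lfloor(d+2)/6\rfloor$ in each case, giving $\text{triv}(d,d-1)=\text{sign}(d,d-1)=\lfloor(d+2)/6\rfloor$.

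The argument is essentially bookkeeping; the one place deserving care is this last step, where the piecewise value of $r$ must be matched against the single closed forms $\lfloor(d+2)/6\rfloor$ and $\lfloor(d-1)/6\rfloor$. The exceptional residue $n\equiv 1\pmod 6$ genuinely occurs — for $\text{triv}$ when $d\equiv 2\pmod 6$, and for $\text{sign}$ when $d\equiv 5\pmod 6$ — so one must verify that it is absorbed correctly by those floor expressions rather than producing an off‑by‑one error.
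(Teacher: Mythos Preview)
Your proof is correct and takes a genuinely different route from the paper's. The paper argues purely from the two recursions of Section~4: it alternates Proposition~\ref{prop:sign-triv} and Proposition~\ref{prop:triv-sign} to obtain chains such as
\[
\text{triv}(d,d-1)=\text{sign}(d-2,d-4)+1=\text{triv}(d-4,d-7)+1=\cdots,
\]
and then terminates by a case split on $d\bmod 6$. You instead observe that for $j\le d-1$ the box constraint in $p_{3,d-1}(j)$ is vacuous, pass to the generating function $1/\bigl((1-x^2)(1-x^3)\bigr)$, and read off $\text{triv}(d,d-1)=r(d-1)$ and (via only the easy recursion, Proposition~\ref{prop:sign-triv}) $\text{sign}(d,d-1)=r(d-4)$, where $r(n)=\#\{(a,b):2a+3b=n\}$. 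This is more self-contained: it avoids Proposition~\ref{prop:triv-sign}, whose four-step proof is the heaviest piece of Section~4, and it makes the $\bmod\ 6$ periodicity transparent as the lcm of $2$ and $3$. The paper's approach, on the other hand, stays entirely within its recursive framework and so meshes more naturally with the derivations of Theorems~\ref{j ge d} and~\ref{thm: j le d-1} that follow. Your closing remark about the exceptional residue $n\equiv 1\pmod 6$ is well placed; that case does occur and your residue checks handle it correctly.
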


\begin{proof}
Since $d-1  \le d$, by the recursive formulas \eqref{eq:sign-triv} and \eqref{eq:triv-sign},
we have 
\begin{equation*}
\begin{aligned}
& \text{triv}(d, d-1) = \text{sign}(d-2, d-4) + 1 \\
&\ \ = \text{triv}(d-4, d-7) + 1 = \text{sign}(d-6,d-10) + 2 \\
& \ \ = \cdots = \text{sign}(d-4l+2, d-6l+2) + l = \text{triv}(d-4l, d-6l -1) + l, \\
& \text{sign}(d, d-1) = \text{triv}(d-2, d-4)  \\
&\ \  = \text{sign}(d-4, d-7) + 1 = \text{triv}(d-6,d-10) + 1 \\
& \ \ = \cdots = \text{triv}(d-4l+2, d-6l+2) + l -1 = \text{sign}(d-4l, d-6l-1) + l.  
\end{aligned}
\end{equation*}

\vskip 2mm 

(a) Suppose $d$ is odd and $d \ge 3$. 

\vskip 2mm 

If $d = 6l +1$, then 
\begin{equation*}
\begin{aligned}
& \text{triv}(d,d-1) = \text{triv}(2l+1, 0) + l =l+1 =\frac{d-1}{6} + 1 = \left\lfloor \frac{d-1}{6} \right\rfloor +1, \\
& \text{sign}(d, d-1) = \text{sign}(2l+1,0) + l =l = \left\lfloor \frac{d-1}{6} \right\rfloor.
\end{aligned}
\end{equation*}

If $d=6l+3$, then 
\begin{equation*}
\begin{aligned}
& \text{triv}(d,d-1) = \text{triv}(2l+3, 2) + l =l+1 =\frac{d-3}{6} + 1  = \left\lfloor \frac{d-3}{6} \right\rfloor +1\\
& \ \  = \left\lfloor \frac{d-3}{6} + \frac{1}{3}\right\rfloor +1 = \left\lfloor \frac{d-1}{6} \right\rfloor +1,\\
& \text{sign}(d, d-1) = \text{sign}(2l+3,2) + l = l = \left\lfloor \frac{d-3}{6} \right\rfloor = \left\lfloor \frac{d-1}{6} \right\rfloor.
\end{aligned}
\end{equation*}

If $d=6l+5$, then 
\begin{equation*}
\begin{aligned}
& \text{triv}(d,d-1) = \text{triv}(2l+5, 4) + l = \text{sign} (2l+3, 1) + l + 1\\
& \ \ = l+1  = \left\lfloor \frac{d-5}{6}\right\rfloor +1 =   \left \lfloor \frac{d-1}{6} \right\rfloor +1, \\
& \text{sign}(d, d-1) = \text{sign}(2l+5,4) + l 
= \text{triv}(2l+3, 1) + l = l = \left \lfloor \frac{d-1}{6} \right \rfloor.
\end{aligned}
\end{equation*}

\vskip 2mm 

(b) Suppose $d$ is even and $d \ge 4$. 

\vskip 2mm

If $d=6l$, then 
\begin{equation*}
\begin{aligned}
& \text{triv}(d,d-1) = \text{triv}(2l, -1) + l = l =\frac{d}{6} = \left\lfloor \frac{d}{6} \right \rfloor = \left \lfloor \frac{d+2}{6} \right \rfloor, \\ 
& \text{sign}(d, d-1) = \text{sign}(2l, -1) +l = l = \left\lfloor \frac{d+2}{6} \right \rfloor.
\end{aligned}
\end{equation*}

If $d = 6l + 2$, then 
\begin{equation*}
\begin{aligned}
& \text{triv}(d,d-1) = \text{triv}(2l+2, 1) + l = l =\frac{d-2}{6} = \left\lfloor \frac{d-2}{6} \right \rfloor =\left \lfloor \frac{d+2}{6} \right\rfloor, \\ 
& \text{sign}(d, d-1) = \text{sign}(2l+2, 1) +l = l = \left \lfloor \frac{d+2}{6} \right \rfloor.
\end{aligned}
\end{equation*}

If $d = 6l + 4$, then 
\begin{equation*}
\begin{aligned}
& \text{triv}(d,d-1) = \text{triv}(2l+4, 3) + l = \text{sign}(2l+2, 0) +l =\frac{d-2}{6} = \left \lfloor \frac{d-2}{6} \right \rfloor
 = \left \lfloor \frac{d+2}{6} \right \rfloor, \\ 
& \text{sign}(d, d-1) = \text{sign}(2l + 4, 3) +l = l +1 =\left \lfloor \frac{d-4}{6} +1  \right \rfloor 
= \left \lfloor \frac{d+2}{6} \right \rfloor.
\end{aligned}
\end{equation*}
\end{proof}

\vskip 2mm 

\begin{example} \ Lemma \ref{lem:j=d-1} implies
\begin{equation*}
\begin{aligned}
& \text{triv} (9,8) = \left\lfloor \frac{9-1}{6} \right\rfloor + 1 =2, \\
& \text{sign}(9,8) = \left\lfloor \frac{9-1}{6} \right \rfloor =1, \\
& \text{triv}(10,9) = \text{sign} (10,9) = \left\lfloor \frac{10+2}{6} \right \rfloor =2. 
\end{aligned}
\end{equation*}

\end{example}

\vskip 3mm 

Now using Lemma \ref{lem:j=d-1}, we will derive the general multiplicity formulas 
for $\text{triv}(d,j)$ and $\text{sign}(d,j)$. 
Note that, thanks to recursive formulas \eqref{eq:sign-triv} and \eqref{eq:triv-sign},  
we have the interlocking relations
\begin{equation}\label{eq:interlocking}
\begin{aligned}
& \text{triv}(d,j) = \text{sign}(d+2, j+3) \ \ \text{for all}  \ d\ge 3, j\ge 0, \\
& \text{sign}(d,j) = \begin{cases}
\text{triv}(d+2, j+3) -1 \ & \text{if} \ j \le d-2, \\
\text{triv}(d+2, j+3) \ & \text{if} \ j \ge d-1.
\end{cases} 
\end{aligned}
\end{equation}

\vskip 2mm 

Suppose $j \ge d$. Then we have 
\begin{equation*}
\begin{aligned}
& \text{triv}(d, j) = \text{sign}(d-2, j-3)  \\
&\ \ = \text{triv}(d-4, j-6)  = \text{sign}(d-6,j-9)  \\
& \ \ = \cdots = \text{sign}(d-4l+2, j-6l+3) = \text{triv}(d-4l, j-6l) = \cdots, \\
& \text{sign}(d, j) = \text{triv}(d-2, j-3)  \\
&\ \  = \text{sign}(d-4, j-6) = \text{triv}(d-6,j-9)  \\
& \ \ = \cdots = \text{triv}(d-4l+2, d-6l+3)  = \text{sign}(d-4l, j-6l) = \cdots.  
\end{aligned}
\end{equation*}

We continue this process until we reach the point where 
$$d - 4l + 1 = j-6l + 3 \ \ \text{or} \ \ d - 4l -1 = j - 6l.$$

If $d - 4l + 1 = j - 6l +3$, then $j-d = 2(l-1)$ is even and $l = \dfrac{j-d+2}{2}$. 

If $d-4l-1 = j-6l$, then $j-d = 2l-1$ is odd and $l = \dfrac{j-d+1}{2}$. 

\vskip 3mm 

Now we will compute $\text{triv}(d,j)$ and $\text{sign}(d,j)$ when $j \ge d$. 

\vskip 3mm 

{\it Case 1}: \, If $d$ is odd and $j-d$ is even, then we have 
\begin{equation*}
\begin{aligned}
& \text{triv}(d,j) = \text{sign}(d-4l+2, j - 6l+3) 
=\left\lfloor  \frac{d-4l+1}{6} \right\rfloor \\
& \ \ =\left\lfloor \frac{d-2(j-d+2) +1}{6} \right \rfloor 
= \left \lfloor  \frac{3d-2j-3}{6} \right \rfloor, \\
& \text{sign}(d,j) = \text{triv}(d,-4l+2, j-6l+3) = \left \lfloor \frac{d-4l+1}{6} \right\rfloor +1 \\
& \ \ = \left\lfloor  \frac{3d-2j-3}{6} \right \rfloor +1.
\end{aligned}
\end{equation*}

\vskip 2mm 

{\it Case 2}: \, If $d$ is odd and $j-d$ is odd, then 
\begin{equation*}
\begin{aligned}
& \text{triv}(d,j) = \text{triv}(d-4l, j - 6l) 
=\left\lfloor  \frac{d-4l-1}{6} \right\rfloor +1 \\
& \ \ =\left\lfloor \frac{d-2(j-d+1) -1}{6} \right\rfloor +1
= \left\lfloor  \frac{3d-2j-3}{6}\right \rfloor +1 \\
& \text{sign}(d,j) = \text{sign}(d,-4l, j-6l) = \left\lfloor \frac{d-4l-1}{6} \right\rfloor  \\
& \ \ = \left\lfloor  \frac{3d-2j-3}{6} \right\rfloor.
\end{aligned}
\end{equation*}

\vskip 2mm 

{\it Case 3}: \, If $d$ is even and $j-d$ is even, then 
\begin{equation*}
\begin{aligned}
& \text{triv}(d,j) = \text{sign}(d,j) = \text{sign}(d-4l+2, j-6l+3) \\
& \ \ = \left\lfloor \frac{d-4l+4}{6} \right \rfloor = \left \lfloor \frac{d-2(j-d+2) +4}{6} \right\rfloor
= \left\lfloor  \frac{3d-2j}{6}\right\rfloor. 
\end{aligned}
\end{equation*}

\vskip 2mm 

{\it Case 4}: \, If $d$ is even and $j-d$ is odd, then 
\begin{equation*}
\begin{aligned}
& \text{triv}(d,j) = \text{sign}(d,j) = \text{sign}(d-4l, j-6l) \\
& \ \ = \left\lfloor \frac{d-4l+2}{6} \right \rfloor = \left \lfloor \frac{d-2(j-d+1) +2}{6} \right\rfloor
= \left\lfloor  \frac{3d-2j}{6}\right \rfloor. 
\end{aligned}
\end{equation*}

\vskip 2mm

To summarize, we obtain the following closed form formulas.

\vskip 2mm 

\begin{theorem} \label{j ge d}

{\rm
Suppose that $j \ge d$.

\vskip 2mm 

(a)  If $d$ is odd and $j-d$ is even, then we have 
\begin{equation*}
\text{triv}(d,j) = \left \lfloor \frac{3d-2j-3}{6} \right\rfloor, \quad \text{sign}(d,j) = \left\lfloor \frac{3d-2j-3}{6} \right\rfloor +1.  
\end{equation*}

\vskip 2mm

(b) If $d$ is odd and $j-d$ is odd, then we have 
\begin{equation*}
\text{triv}(d,j) = \left\lfloor \frac{3d-2j-3}{6} \right \rfloor +1, \quad \text{sign}(d,j) = \left \lfloor \frac{3d-2j-3}{6} \right \rfloor. 
\end{equation*}

\vskip 2mm 

(c) If $d$ is even, then we have 
\begin{equation*}
\text{triv}(d,j) = \text{sign}(d,j) = \left\lfloor \frac{3d-2j}{6} \right\rfloor.  
\end{equation*}
}
\end{theorem}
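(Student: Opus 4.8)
The statement is a direct consequence of the chain of reductions displayed just above it, so the plan is mainly to organize that descent cleanly. First I would record the two recursions in the precise form needed here: for $j\ge d$ the additive correction in Proposition~\ref{prop:triv-sign} disappears, so together with Proposition~\ref{prop:sign-triv} one has $\text{triv}(d,j)=\text{sign}(d-2,j-3)$ and $\text{sign}(d,j)=\text{triv}(d-2,j-3)$. Thus a single step exchanges the two names while replacing $(d,j)$ by $(d-2,j-3)$, and two steps restore the names at $(d-4,j-6)$. The key observations are that each step decreases $j-d$ by exactly $1$ and preserves the parity of $d$, and that every pair encountered before we reach $j-d=-1$ still satisfies ``second coordinate $\ge$ first coordinate'' (the $j\ge d$ branch, $j=d$ included), so the correction-free branch applies all the way down.

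Next I would pin down the terminal pair, i.e.\ the first pair of the form $(d',d'-1)$. If $j-d$ is even this is reached after $l=\tfrac{j-d+2}{2}$ units of the chain, at $(d-4l+2,\,j-6l+3)$ with $j-6l+3=(d-4l+2)-1$, and the names are swapped: $\text{triv}(d,j)=\text{sign}(d-4l+2,(d-4l+2)-1)$, $\text{sign}(d,j)=\text{triv}(d-4l+2,(d-4l+2)-1)$. If $j-d$ is odd it is reached after $l=\tfrac{j-d+1}{2}$ units, at $(d-4l,\,j-6l)$ with $j-6l=(d-4l)-1$, and the names are unchanged: $\text{triv}(d,j)=\text{triv}(d-4l,(d-4l)-1)$, $\text{sign}(d,j)=\text{sign}(d-4l,(d-4l)-1)$. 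At this point one invokes Lemma~\ref{lem:j=d-1} — \emph{not} the recursion — so no spurious $+1$ is introduced.

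Finally I would feed the terminal pair into Lemma~\ref{lem:j=d-1}, splitting on the parity of its first coordinate, which is the parity of $d$. When $d$ is odd, part (a) of the Lemma gives terminal values $\lfloor\frac{d'-1}{6}\rfloor+1$ and $\lfloor\frac{d'-1}{6}\rfloor$; substituting $d'-1=d-4l+1$ with the value of $l$ simplifies $d-4l+1$ to $3d-2j-3$, and whether the names were swapped distinguishes cases (a) and (b). When $d$ is even, part (b) of the Lemma gives a single value $\lfloor\frac{d'+2}{6}\rfloor$ for both $\text{triv}$ and $\text{sign}$, so the swap becomes irrelevant and the two parities of $j-d$ merge; substituting $d'+2$ (which is $d-4l+4$ or $d-4l+2$ according to that parity) reduces it to $3d-2j$ in either case, yielding (c). The only part needing genuine care is this bookkeeping — correctly locating the terminal pair as a function of the parity of $j-d$, verifying the descent never leaves the $j\ge d$ regime before stopping, and carrying out the floor simplifications $\lfloor\frac{d-4l+1}{6}\rfloor=\lfloor\frac{3d-2j-3}{6}\rfloor$ together with their even-$d$ analogues; once these are settled the remainder is routine arithmetic.
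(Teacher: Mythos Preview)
Your proposal is correct and follows essentially the same route as the paper: iterate the recursions \eqref{eq:sign-triv} and \eqref{eq:triv-sign} in their correction-free form until the pair $(d',d'-1)$ is reached, then read off the terminal value from Lemma~\ref{lem:j=d-1} and simplify. Your write-up is in fact slightly tidier than the paper's in two respects: you explicitly check that every intermediate pair satisfies $j'\ge d'$ so that the $+1$ in \eqref{eq:triv-sign} never enters (the paper leaves this implicit), and you observe that for even $d$ the swap of names is immaterial, so the two parities of $j-d$ collapse into a single computation rather than the paper's separate Cases~3 and~4.
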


\vskip 2mm 

\begin{example} \ By Theorem \ref{j ge d}, we have 
\begin{equation*}
\begin{aligned}
& \text{triv}(9,12) = \left\lfloor \frac{27-24-3}{6} \right\rfloor + 1 = 1, \ \ 
\text{sign}(9,12) =  \left\lfloor \frac{27-24-3}{6} \right\rfloor = 0, \\
& \text{triv}(10,12) =\text{sign}(10, 12) = \left\lfloor \frac{30-24}{6} \right\rfloor = 1.
\end{aligned}
\end{equation*}

\end{example}

\vskip 3mm

Next, we will consider the case $j \le d-2$. By the recursive relations \eqref{eq:interlocking}, we have
\begin{align*}
& \text{triv}(d, j) = \text{sign}(d+2, j+3)  \\
&\ \ = \text{triv}(d+4, j+6) -1  = \text{sign}(d+6,j+9) -1  \\
& \ \ = \cdots = \text{sign}(d+4-+2, j+6l-3) - l +1= \text{triv}(d+4l, j+6l) -l = \cdots, \\
& \text{sign}(d, j) = \text{triv}(d+2, j+3 -1 \\
&\ \  = \text{sign}(d+4, j+6) -1= \text{triv}(d+6,j+9) -2  \\
& \ \ = \cdots = \text{triv}(d+4l-2, d+6l-3) -l  = \text{sign}(d+4l, j+6l) -l  = \cdots.  
\end{align*}

\vskip 2mm 

If $d+4l-3=j+6l-3$, then $d-j = 2l$ is even and $l = \dfrac{d-j}{2}$. 

If $d+4l-1 = j+6l$, then $d-j = 2l+1$ is odd and $l = \dfrac{d-j-1}{2}$. 

We will compute $\text{triv}(d,j)$ and $\text{sign}(d,j)$ for $j \le d-2$. 

\vskip 3mm 

{\it Case 1}: \,  If $d$ is odd and $d-j$ is even, then we have 
\begin{equation*}
\begin{aligned}
& \text{triv}(d,j) = \text{sign}(d+4l-2, j + 6l-3) -l +1 
=\left \lfloor  \frac{d+4l-3}{6} \right\rfloor -l +1\\
& \ \ =\left\lfloor \frac{d+2(d-j)+3}{6} \right\rfloor -\frac{d-j}{2} + 1 
= \left\lfloor  \frac{3d-2j-3}{6} \right \rfloor - \left(\frac{d-j-2}{2} \right), \\
& \text{sign}(d,j) = \text{triv}(d,+4l-2, j+6l-3) -l =\left(\left \lfloor \frac{d+4l-3}{6} \right\rfloor +1 \right) - \frac{d-j}{2} \\
& \ \ = \left\lfloor  \frac{3d-2j-3}{6} \right\rfloor -\left(\frac{d-j-2}{2} \right).
\end{aligned}
\end{equation*}

\vskip 2mm 

{\it Case 2}: \, If $d$ is odd and $d-j$ is odd, then we have 
\begin{align*}
& \text{triv}(d,j) = \text{triv}(d+4l, j + 6l) -l  
=\left \lfloor  \frac{d+4l-1}{6} \right\rfloor +1 -l \\
& \ \ =\left \lfloor \frac{d+2(d-j +1)-1}{6} \right \rfloor +1 -\frac{d-j+1}{2}  
=\left \lfloor  \frac{3d-2j-3}{6} \right \rfloor - \left(\frac{d-j-3}{2} \right), \\
& \text{sign}(d,j) = \text{triv}(d,+4l, j+6l) - l =\left( \left\lfloor \frac{d+4l-1}{6} \right \rfloor  \right) - \frac{d-j-1}{2} \\
& \ \ = \left \lfloor  \frac{3d-2j-3}{6} \right \rfloor -\left(\frac{d-j-1}{2} \right).
\end{align*}

\vskip 2mm 

{\it Case 3}: \, If $d$ is even and $d-j$ is even, then we have 
\begin{align*}
& \text{triv}(d,j) = \text{sign}(d+4l-2, j + 6l-3) -l +1 
=\left \lfloor  \frac{d+4l}{6} \right \rfloor -l +1\\
& \ \ =\left \lfloor \frac{d+2(d-j)}{6} \right \rfloor -\frac{d-j}{2} + 1 
= \left \lfloor  \frac{3d-2j}{6} \right \rfloor - \left(\frac{d-j-2}{2} \right), \\
& \text{sign}(d,j) = \text{triv}(d,+4l-2, j+6l-3) -l =\left \lfloor \frac{d+4l}{6} \right\rfloor  - \frac{d-j}{2} \\
& \ \ = \left\lfloor  \frac{3d-2j}{6} \right\rfloor -\left(\frac{d-j}{2} \right).
\end{align*}

\vskip 2mm 

{\it Case 4}: \, If $d$ is even and $d-j$ is odd, then we have 
\begin{align*}
& \text{triv}(d,j) = \text{triv}(d+4l, j + 6l) -l  
=\left\lfloor  \frac{d+4l+2}{6} \right\rfloor - l \\
& \ \ =\left\lfloor \frac{d+2(d-j -1) +2}{6} \right \rfloor -\frac{d-j+1}{2}  
= \left \lfloor  \frac{3d-2j}{6} \right\rfloor - \left(\frac{d-j-1}{2} \right), \\
& \text{sign}(d,j) = \text{sign}(d,+4l, j+6l) - l =\left \lfloor \frac{d+4l+2}{6} \right \rfloor  - \frac{d-j-1}{2} \\
& \ \ = \left \lfloor  \frac{3d-2j}{6} \right \rfloor -\left(\frac{d-j-1}{2} \right).
\end{align*}

\vskip 2mm

Moreover, note that the case $j =d-1$ is included in (Case 2) and (Case 4). 

\vskip 2mm 

Therefore we obtain:

\vskip 2mm 

\begin{theorem} \label{thm: j le d-1}

{\rm 
Suppose $j \le d-1$.

\vskip 2mm 

(a) If $d$ is odd and $d-j$ is even, then we have 
\begin{equation*}
\text{triv}(d,j) = \text{sign}(d,j) = \left \lfloor \frac{3d-2j-3}{6} \right \rfloor - \left( \frac{d-j-2}{2} \right). 
\end{equation*}

\vskip 2mm 

(b) If $d$ is odd and $d-j$ is odd, then we have 
\begin{equation*}
\text{triv}(d,j) = \left\lfloor \frac{3d-2j-3}{6} \right\rfloor - \left( \frac{d-j-3}{2} \right), \ \ 
\text{sign}(d,j) = \left\lfloor \frac{3d-2j-3}{6} \right \rfloor - \left( \frac{d-j-1}{2} \right).
\end{equation*}

\vskip 2mm 

(c) If $d$ is even and $d-j$ is even, then we have 
\begin{equation*}
\text{triv}(d,j) = \left\lfloor \frac{3d-2j}{6} \right\rfloor - \left( \frac{d-j-2}{2} \right), \ \ 
\text{sign}(d,j) = \left\lfloor \frac{3d-2j}{6} \right\rfloor - \left( \frac{d-j}{2} \right).
\end{equation*}

\vskip 2mm 

(d) if $d$ is even and $d-j$ is odd, then we have 
\begin{equation*}
\text{triv}(d,j) = \text{sign}(d,j) = \left\lfloor \frac{3d-2j}{6} \right \rfloor - \left( \frac{d-j-1}{2} \right). 
\end{equation*}
}
\end{theorem}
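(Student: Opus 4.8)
The plan is to iterate the interlocking recursion \eqref{eq:interlocking} in the ``upward'' direction, starting from a pair $(d,j)$ with $j \le d-1$ and increasing the exponent until we land on the corner-stone case $j = d-1$, whose multiplicities are already computed in Lemma \ref{lem:j=d-1}. For $j \le d-2$ the relations \eqref{eq:interlocking} give $\text{triv}(d,j) = \text{sign}(d+2,j+3)$ and $\text{sign}(d,j) = \text{triv}(d+2,j+3) - 1$; applying them alternately and keeping track of the accumulated $-1$'s, a straightforward induction on $l$ yields
\begin{equation*}
\text{triv}(d,j) = \text{sign}(d+4l-2, j+6l-3) - l + 1, \qquad \text{sign}(d,j) = \text{triv}(d+4l-2, j+6l-3) - l,
\end{equation*}
together with the companion pair in which the argument is instead $\text{triv}(d+4l, j+6l)$, exactly as displayed in the discussion preceding the theorem.

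First I would fix the stopping index $l$. The recursion should be halted at the first step where the shifted degree is one below the shifted exponent, that is, where $d+4l-3 = j+6l-3$ or $d+4l-1 = j+6l$. The former occurs precisely when $d-j$ is even, with $l = (d-j)/2$; the latter when $d-j$ is odd, with $l = (d-j-1)/2$. In both situations one computes $d+4l-3 = 3d-2j-3$ (respectively $d+4l = 3d-2j$), which is the source of the numerators in the closed-form answer. This produces a split into the two parity classes of $d-j$, and Lemma \ref{lem:j=d-1} further splits each according to the parity of the terminal exponent --- which equals the parity of $d$ --- giving the four cases (a)--(d). In each case I would substitute the appropriate corner-stone value from Lemma \ref{lem:j=d-1}: for example, in case (a) ($d$ odd, $d-j$ even) one uses $\text{sign}(d+4l-2, j+6l-3) = \lfloor (d+4l-3)/6 \rfloor$ and $\text{triv}(d+4l-2, j+6l-3) = \lfloor (d+4l-3)/6 \rfloor + 1$, then plugs in $l = (d-j)/2$ to obtain $\text{triv}(d,j) = \lfloor (3d-2j-3)/6 \rfloor - (d-j-2)/2$ and the same value for $\text{sign}(d,j)$. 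The remaining three cases are handled identically, invoking part (a) or (b) of Lemma \ref{lem:j=d-1} according to the parity of $d$ and according to whether the recursion terminated on a $\text{triv}$ or a $\text{sign}$ corner-stone. Finally I would note that the boundary $j = d-1$ has $d-j = 1$, so it falls under case (b) or case (d), and one checks directly that the formulas there reproduce Lemma \ref{lem:j=d-1}.

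The main obstacle I expect is the careful bookkeeping around the floor function: after substituting $l$, one must verify that the linear shift $-l+1$ or $-l$ combines with $\lfloor (d+4l-3)/6 \rfloor$ (respectively $\lfloor (d+4l)/6 \rfloor$ or $\lfloor (d+4l+2)/6 \rfloor$) to produce exactly $\lfloor (3d-2j-3)/6 \rfloor$ (respectively $\lfloor (3d-2j)/6 \rfloor$) minus the stated linear term, with no off-by-one error slipping in when Lemma \ref{lem:j=d-1} is applied --- which requires being attentive to which of the lemma's two formulas (odd $d$ versus even $d$) is in force at the terminal step, and to the fact that the exponent has been shifted by an even amount $4l$ so its parity is unchanged. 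Everything else is a routine case-by-case verification.
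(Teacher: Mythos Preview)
Your proposal is correct and follows essentially the same approach as the paper: iterate the interlocking relations \eqref{eq:interlocking} upward until the corner-stone case $j=d-1$ is reached, determine the stopping index $l$ from the parity of $d-j$, and then substitute the values from Lemma \ref{lem:j=d-1} according to the parity of $d$ (which is preserved under the shift $d\mapsto d+4l$). The paper carries out exactly these four case computations, and your identification of the stopping conditions $l=(d-j)/2$ or $l=(d-j-1)/2$ and the resulting numerators matches the paper's derivation.
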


\vskip 2mm

\begin{example} \ By Theorem \ref{thm: j le d-1}, we have  
\begin{equation*}
\begin{aligned}
& \text{triv} (9,6) = \left \lfloor \frac{27-12-3}{6} \right \rfloor - \left( \frac{9-6-3}{2} \right) = 2=0=2, \\
& \text{sign}(9,6) = \left \lfloor \frac{27-12-3}{6} \right \rfloor - \left(\frac{9-6-1}{2}\right) = 2-1 =1, \\
& \text{triv} (10, 6) = \left \lfloor \frac{30-12}{6} \right \rfloor - \left(\frac{10-6-2}{2} \right) = 3-1=2, \\
& \text{sign} (10, 6) = \left \lfloor \frac{30-12}{6} \right\rfloor -\left( \frac{10-6}{2} \right) = 3-2 =1. 
\end{aligned}
\end{equation*}
\end{example}

\vskip 3mm 

We will now move on to standard representations. Assume first that $j=d-1$. 
By the resursive formula \eqref{eq:st-recursive}, we have 
\begin{equation*}
\begin{aligned}
 & \text{st}(d,d-1) = \text{st}(d-2,d-4) +1 = \text{st}(d-6, d-7) +2 \\
 & \ \  = \cdots = \text{st}(d-2l, d-3l-1) +l. 
 \end{aligned}
\end{equation*}

If $d=3l$, then 
$$\text{st}(d, d-1) = \text{st}(l, -1) + l = l =\frac{d}{3} = \left \lfloor \frac{d}{3} \right\rfloor 
= \left \lfloor \frac{d+1}{3} \right \rfloor.$$

If $d=3l+1$, then 
$$\text{st}(d, d-1) = \text{st}(l+1, 0) + l = l = \frac{d-1}{3} =\left \lfloor \frac{d-1}{3} \right \rfloor
=\left\lfloor  \frac{d+1}{3} \right \rfloor.$$ 

If $d = 3l +2$, then 
$$\text{st}(d, d-1) = \text{st}(l+2, 1) = l+1= \frac{d-2}{3} + 1 = \frac{d+1}{3} 
=\left \lfloor \frac{d+1}{3} \right\rfloor.$$

Therefore we obtain 
\begin{equation} \label{eq:st=d-1}
\text{st} (d, d-1) =\left \lfloor \frac{d+1}{3} \right\rfloor.
\end{equation}

\vskip 2mm 

Note that the recursive relations \eqref{eq:st-recursive} can be rephrased as 
\begin{equation} \label{eq:st-recursive2}
\text{st}(d,j) = \begin{cases}
\text{st}(d+2, j+3) -1 \ \ & \text{if} \ j \le d-2, \\
\text{st}(d+2, j+3) \ \ & \text{if} \ j \ge d-1.
\end{cases}
\end{equation}

\vskip 2mm 

Hence if $j \ge d$, we have 
\begin{equation*}
\text{st}(d,j) = \text{st}(d-2, j-3) = \text{st}(d-4, j-6) = \cdots 
= \text{st}(d-2l, j-3l) = \cdots 
\end{equation*}
Thus when we reach the point where $d-2l-1 = j - 3l$.  $l=j-d+1$ and we have 
$$\text{st}(d,j) = \text{st}(d-2l, j-3l) = \left\lfloor \frac{d-2l+1}{3} \right\rfloor 
=\left \lfloor \frac{3d-2j-1}{3} \right \rfloor.$$

\vskip 2mm 

If $j \le d-2$, then 
$$\text{st} (d,j) = \text{st}(d+2, j+3) -1 = \text{st}(d+4, j+6)-2 
= \cdots = \text{st}(d + 2l, j+3l) -l = \cdots $$.
Thus when $d + 2l -1 = j+3l$, $l=d-j-1$ and we have 
\begin{equation*}
\begin{aligned}
& \text{st}(d,j) = \text{st}(d+2l, j+3l) -l = \left \lfloor \frac{d+2l +1}{3} \right\rfloor -l \\
& \ \ = \left\lfloor \dfrac{d+2(d-j-1)+1}{3} \right \rfloor -(d-j-1) \\
& \ \ = \left \lfloor \frac{3d-2j-1}{3} \right \rfloor - (d-j-1).
\end{aligned}
\end{equation*}

Note that the case $j=d-1$ is included in the above formula. 

\vskip 2mm 

Therefore, the multiplicities of standard representations are given by the 
following theorem. 

\vskip 2mm 

\begin{theorem} \label{thm:st-closed form}
{\rm 
For all $d \ge 3$ and $j \ge 0$, we have 
\begin{equation} \label{eq:st-closed form}
\text{st}(d,j) = \begin{cases}
 \left\lfloor \dfrac{3d-2j-1}{3} \right\rfloor \ \ & \text{if} \ j \ge d, \\ \vspace{-0.4cm}
 \\
 \left\lfloor \dfrac{3d-2j-1}{3}  \right\rfloor - (d-j-1) \ \ & \text{if} \ j \le d-1. 
\end{cases}
\end{equation}
}
\end{theorem}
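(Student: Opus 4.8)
The plan is to obtain \eqref{eq:st-closed form} directly from the recursion \eqref{eq:st-recursive} (equivalently its shifted form \eqref{eq:st-recursive2}), bootstrapping from a single ``diagonal'' base case $j=d-1$, in exactly the same style as the treatment of $\text{triv}$ and $\text{sign}$ above.

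\textbf{Step 1: the corner-stone $j=d-1$.} Since $d-1\le d-1$, \eqref{eq:st-recursive} gives $\text{st}(d,d-1)=\text{st}(d-2,d-4)+1$, and as $d-4\le(d-2)-1$ the same branch applies again; iterating $l$ times yields $\text{st}(d,d-1)=\text{st}(d-2l,\,d-3l-1)+l$. I would stop at the first $l$ for which the degree argument drops to $-1$, $0$ or $1$, and use the elementary evaluations $\text{st}(d',-1)=0$, $\text{st}(d',0)=0$, and $\text{st}(d',1)=1$ (the last because $\text{Ker}(E)\cap A(d')_{1}$ is exactly the standard module $\{a_1+a_2+a_3=0\}$ for $d'\ge 2$). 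Splitting according to the residue of $d$ modulo $3$ — so that $d-2l\in\{l,\,l+1,\,l+2\}$ at the stopping point — then gives $\text{st}(d,d-1)=\left\lfloor\frac{d+1}{3}\right\rfloor$, which is \eqref{eq:st=d-1}.

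\textbf{Step 2: reducing the general case to the diagonal.} For $j\ge d$, apply the second branch of \eqref{eq:st-recursive} repeatedly: at $(d-2k,\,j-3k)$ this branch is valid precisely when $j-3k\ge d-2k$, i.e.\ $k\le j-d$, and it yields $\text{st}(d-2k,j-3k)=\text{st}(d-2k-2,\,j-3k-3)$. After $l=j-d+1$ steps we land on $(d-2l,\,j-3l)=(3d-2j-2,\,3d-2j-3)$, whose degree is one less than its exponent, so Step 1 applies and $\text{st}(d,j)=\left\lfloor\frac{(3d-2j-2)+1}{3}\right\rfloor=\left\lfloor\frac{3d-2j-1}{3}\right\rfloor$. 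Symmetrically, for $j\le d-1$ I would run \eqref{eq:st-recursive2} upward, $\text{st}(d,j)=\text{st}(d+2,j+3)-1$, which is legitimate while $j\le d-2$; after $l=d-j-1$ steps one again reaches $(3d-2j-2,\,3d-2j-3)$, so $\text{st}(d,j)=\left\lfloor\frac{3d-2j-1}{3}\right\rfloor-(d-j-1)$, with $j=d-1$ recovered as the $l=0$ instance. Together, these two ranges are \eqref{eq:st-closed form}.

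\textbf{Where the effort lies.} Nothing here is conceptually hard; the care needed is purely bookkeeping: (i) checking at each iteration that one stays inside the regime in which the chosen branch of the recursion is valid (the inequalities $k\le j-d$ and $k\le d-j-2$), and (ii) the floor arithmetic, both in the residue analysis of Step 1 and in collapsing $\left\lfloor\frac{(3d-2j-2)+1}{3}\right\rfloor$ to $\left\lfloor\frac{3d-2j-1}{3}\right\rfloor$. A shortcut that avoids all of this is to simply \emph{verify} that the proposed right-hand side of \eqref{eq:st-closed form} satisfies \eqref{eq:st-recursive} and agrees with $\text{st}$ at the base exponents $d\le 2$; since \eqref{eq:st-recursive} determines $\text{st}(d,j)$ uniquely by downward induction on $d$ from such data, the two coincide. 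One could also bypass the $\text{st}$-recursion entirely by substituting the already-established formulas for $\text{triv}$ and $\text{sign}$ (Theorems \ref{j ge d} and \ref{thm: j le d-1}) together with \eqref{eq:mult} into \eqref{eq:standard}, at the price of a longer computation split by the parities of $d$ and of $d-j$.
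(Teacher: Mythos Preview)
Your proof is correct and follows essentially the same approach as the paper: first establish the diagonal value $\text{st}(d,d-1)=\left\lfloor\frac{d+1}{3}\right\rfloor$ by iterating \eqref{eq:st-recursive} and splitting on $d\bmod 3$, then for $j\ge d$ iterate downward and for $j\le d-1$ iterate upward (via \eqref{eq:st-recursive2}) until landing on the diagonal $(3d-2j-2,\,3d-2j-3)$. Your bookkeeping of the number of steps and the validity of each branch is in fact a bit more explicit than the paper's own presentation.
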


\vskip 2mm 

\begin{example} \ By \eqref{eq:st=d-1} and Theorem \ref{thm:st-closed form}, we have 
\begin{align*}
& \text{st}(9,8) = \left \lfloor \frac{9+1}{3} \right \rfloor=3, \ \ 
\text{st}(9,12) = \left \lfloor \frac{27-24-1}{3} \right \rfloor = 0,\\
& \text{st}(9,6) = \left \lfloor \frac{27-12-1}{3}\right \rfloor - (9-6-1) = 4-2=2, \\
& \text{st}(10, 9) = \left \lfloor   \frac{10+1}{3} \right \rfloor =3, \ \ 
\text{st}(10, 12) = \left \lfloor  \frac{30 - 24 -1}{3} \right \rfloor =1,  \\
& \text{st}(10, 6) = \left \lfloor \frac{30 - 12 -1}{3}  \right \rfloor  - (10-6-1) = 5-3 =2. 
\end{align*}
\end{example}

Note that all the computations coincide with each other both in recursive form and in closed form.

\vskip 7mm

\end{document}